\newtheorem{thm}{Theorem}
\newtheorem{lem}{Lemma}
\newtheorem{dfn}{Definition}
\newtheorem{rmk}{Remark}
\newtheorem{cor}{Corollary}
\newtheorem{prop}{Proposition}
\newtheorem*{proof}{Proof}
\newcommand{\vect}[1]{\mathbf{#1}}
\title{\shadowbox{TECHNICAL REPORT: CUED/F-INFENG/TR.671}\\\vspace{\baselineskip}\bf Designing MPC controllers by reverse-engineering existing LTI controllers}
\author{E. N. Hartley\thanks{\tt enh20@eng.cam.ac.uk}{~}~and J. M. Maciejowski\thanks{\tt jmm@eng.cam.ac.uk}\\
~\\
\it(Cambridge University Engineering Department)
}
\begin{document}

\pagestyle{empty}
\begin{tikzpicture}[remember picture, overlay]

\path (current page.north west) -- ++(5.75,-9.5) coordinate (aaaa);

\draw[very thick, draw=blue!50!black] (aaaa) rectangle ++(9.5,-6) coordinate (bbbb);

\path (aaaa) -- node[pos=0.5,text width=8.5cm, text badly centered]{%
\fontfamily{pcr}%
\fontseries{b}
\fontsize{12}{15}
\selectfont%
{Designing MPC controllers by reverse-engineering existing LTI controllers}\\
\fontseries{m}%
\selectfont%
\vspace{\baselineskip}
E. N. Hartley, J. M. Maciejowski\\
\vspace{\baselineskip}
CUED/F-INFENG/TR.671\\
September 2011
} (bbbb);

\end{tikzpicture}

\cleardoublepage
\setcounter{page}{1}
\pagestyle{fancyplain}
\maketitle

\vfill
\begin{abstract}
This technical report presents a method for designing a constrained output-feedback model predictive controller (MPC) that behaves in the same way as an existing baseline stabilising linear time invariant output-feedback controller when constraints are inactive.  The baseline controller is cast into an observer-compensator form and an inverse-optimal cost function is used as the basis of the MPC controller.  The available degrees of design freedom are explored, and some guidelines provided for the selection of an appropriate observer-compensator realisation that will best allow exploitation of the constraint-handling and redundancy management capabilities of MPC.  Consideration is given to output setpoint tracking, and the method is demonstrated with three different multivariable plants of varying complexity.

\vspace{3\baselineskip}
\it\scriptsize
A paper based on the work presented in this technical report has been submitted to IEEE Transactions on Automatic Control on 12/09/2011 as a regular paper under the title ``Designing output-feedback predictive controllers by reverse-engineering existing LTI controllers''.
\end{abstract}
\vfill

\cleardoublepage
\tableofcontents 

\cleardoublepage
\section{Introduction}

In many cases, a conventional linear time-invariant (LTI) controller already exists for a given application where a system designer would consider the use of model predictive control (MPC) to improve performance.  The ability to handle input and output constraints in a systematic manner is one of the main reasons that motivates the use of MPC \citep{Mac2002,CB2004,RM2009} and the keystone of its industrial success \citep{QB2003}, allowing plants to safely operate more closely to the boundaries of their feasible operating regions.

Whilst the definition of the constraints is usually obvious for a given application, motivated by the physical limitations of the plant and performance requirements on the controlled variables, encoding the remaining control objectives into the cost function is often unintuitive, especially for a highly cross-coupled MIMO plant in the presence of unmeasured disturbances.

When full state measurements or estimates exist, an inverse-optimal control problem can be constructed to obtain a cost function for which the (unconstrained) optimum solution is equivalent to a prescribed state feedback gain \citep{Kalman1964}.  It was noted by \citep{KJ1972} that a quadratic cost function including cross-terms between state and input values could reproduce any multivariable state feedback gain, by making the primary control objective to reproduce the state feedback gain.  If a state feedback gain $K_c$ is within the domain of gains that can be obtained by solving the infinite horizon LQR problem, a linear matrix inequality (LMI) problem \citep{BGFB1994} can be posed to find quadratic cost weightings $Q \ge0$ and $R>0$ such that the infinite-horizon LQR controller is $K_c$.  In \citep{DB2009,DB2010} an LMI-based method allowing quadratic cost weights to vary throughout a finite prediction horizon is proposed allowing reproduction of a wider range of static gains when operating in the controller's unconstrained regime and that the behaviour of a dynamic feedback controller can also be reproduced by including the original controller dynamics within the plant model.

In reality, the full state measurements are not always directly available, necessitating further work for the system designer: the design of a state observer to obtain an estimate of the state.  The observer introduces additional dynamics that change the closed-loop behaviour.  Moreover, it is known that a ``fast'' observer and a ``good'' feedback gain do not necessarily combine to give good closed-loop performance \citep{Doyle1978,DS1979}, so this process is nontrivial.

To capitalise on the effort already expended during the design of the existing unconstrained LTI output-feedback controller, it would be desirable to be able to construct an initial MPC design and state observer, which, when used in combination, replicate the unconstrained behaviour of the original controller.

In \citep{RM1999,RM2000} a method was proposed for obtaining an MPC controller with the same unconstrained behaviour as an $H_{\infty}$ controller obtained through the \emph{loop-shaping} procedure of \citep{MG1992}, thus inheriting its desirable performance and robustness properties when constraints are inactive.  This paper instead describes a method in which an observer-compensator realisation of an arbitrary stabilising LTI output-feedback controller can be used to obtain the state observer, and cost function for the MPC controller.

The design method presented here relies upon the results of \citep{BF1985,Ben1985,FBA1986}, further developed in \citep{AA1999,DAC2006}, which show an analytical method for obtaining an observer-based realisation of an arbitrary linear time-invariant, stabilising, output feedback controller, and the methodology is motivated by the cross standard form (CSF), introduced in \citep{Ala2002a}. The CSF is an inverse-optimal generalised plant model with the optimal $\mathcal{H}_2$ and $\mathcal{H}_{\infty}$ controllers both equal to the observer-based realisations of a pre-specified output feedback controller $K_0$ of order greater than or equal to the order of the plant.  A discrete-time variant of the CSF of \citep{Ala2002a} is defined in \citep{VAAMC2003}, whilst in \citep{DAC2006} the continuous-time CSF is generalised to accommodate the case where the baseline controller is of lower order than the controlled plant.  

It was proposed in \citep{Mac2007} that a similar method can also be used as the starting point for a model predictive controller that, by construction, will exhibit the same behaviour as the original feedback controller in the absence of active constraints.  This paper builds upon the work of \citep{Mac2007,HM2009,JM2009, Hartley2010} by further addressing the effects of the choices of the non-unique realisation of the original controller in the context of the resulting constrained predictive controller.  Furthermore, a method is proposed, with which reference-tracking controllers can also be reverse-engineered and cast into the MPC framework.  The introduction of unwanted cross-coupling between seemingly unrelated control loops is also explained, and methods for avoiding this are proposed.

Three case studies are presented demonstrating the effectiveness of the procedure:
\begin{enumerate}
\item \hyperref[sec:scattitude]{Attitude control of a satellite with redundant torque pairs};
\item \hyperref[sec:invpend]{Control of an inverted pendulum}; and
\item \hyperref[sec:b747]{Control of a large airliner}.
\end{enumerate}

\section{Observer-based controller realisations}
\label{sec:obsbasedreworked}
MPC controllers are implemented on digital computers and usually operate in discrete time.  Also, the original controller could be of lower order than the plant model (for example, a proportional-integral controller), so a discrete-time variant of the method of \citep{DAC2006} for low order controllers is used to obtain the observer-compensator realisation.  The observer will be implemented directly, and the state feedback gain $K_c$ used as the basis for an MPC controller to which constraint handling will be added (Figure~\ref{fig:revengprocedure}).

\begin{figure*}[htbp]
\centering
\footnotesize
\begin{tikzpicture}
\tikzstyle{bdblock} = [draw, very thick, rounded corners, minimum height=0.75cm, minimum width=0.75cm]
\tikzstyle{bdarrow} = [draw, very thick, ->];

\begin{scope}
\node[bdblock](GZ){$G(z)$};
\node[bdblock, below of=GZ, node distance=1cm](KZ){$K(z)$};
\draw[bdarrow] (GZ) -- node[above]{$y$} +(1.25,0) |- (KZ);
\draw[bdarrow] (KZ) -- node[above]{$u$} +(-1.25,0) |- (GZ);

\node at (0,-2) (n1) {(a) Original system};
\end{scope}

\begin{scope}[xshift=4.75cm,yshift=0.5cm]
\node[bdblock](GZ){$G(z)$};
\path(GZ) -- +(-1,-1) coordinate (QQ);
\path(GZ) -- +(1,-1) coordinate (RR);

\node[bdblock](KC) at (QQ){$K_c$};
\node[bdblock](OBS) at (RR){$G_{\text{obs}}(z)$};

\draw[bdarrow] (GZ) -- node[above]{$y$} +(2.25,0) |- (OBS);
\draw[bdarrow] (OBS) -- node[above]{$\hat{x}$}(KC);

\draw[very thick](KC) -- node[above]{$u$} +(-1.5,0) coordinate (SS);
\draw[very thick] (SS) |- (GZ);
\draw[bdarrow] (SS) -- +(0,-1) -| (OBS);
\node at (0,-2.5) (n2) {(b) Observer based realisation};
\end{scope}

\begin{scope}[xshift=10.5cm,yshift=0.5cm]
\node[bdblock](GZ){$G(z)$};
\path(GZ) -- +(-1,-1) coordinate (QQ);
\path(GZ) -- +(1,-1) coordinate (RR);

\node[bdblock](KC) at (QQ){MPC};
\node[bdblock](OBS) at (RR){$G_{\text{obs}}(z)$};

\draw[bdarrow] (GZ) -- node[above]{$y$} +(2.25,0) |- (OBS);
\draw[bdarrow] (OBS) -- node[above]{$\hat{x}$}(KC);

\draw[very thick](KC) -- node[above]{$u$} +(-1.5,0) coordinate (SS);
\draw[very thick] (SS) |- (GZ);
\draw[bdarrow] (SS) -- +(0,-1) -| (OBS);
\node at (0,-2.5) (n3) {(c) MPC controller};
\end{scope}

\node at ($(n1.east)!0.5!(n2.west)$) {$\rightarrow$};
\node at ($(n2.east)!0.5!(n3.west)$) {$\rightarrow$};
\end{tikzpicture}
\caption{Schematic representation of the reverse-engineering procedure}
\label{fig:revengprocedure}
\end{figure*}
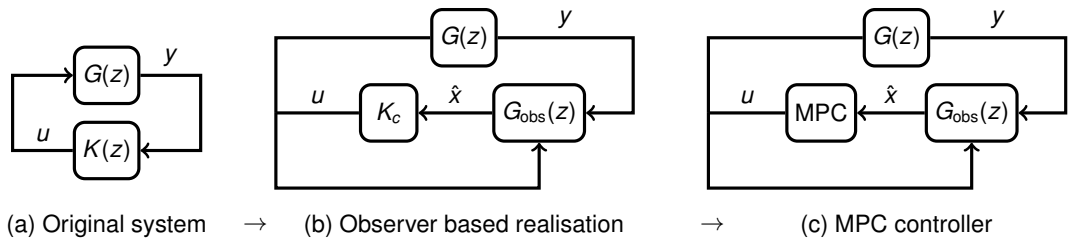

The principles for obtaining the observer-based realisation are now presented in sufficient detail to motivate the decisions made to obtain a satisfactory constrained controller.  Refer to the appendix for proofs.  Consider a linear discrete-time state-space plant model $G(z)$ of order $n$ with $n_y$ outputs and $n_u$ inputs, with pair $(C,A)$ observable, and an existing stabilising LTI controller $K_0(z)$ expressed as its minimal realisation with order $n_K \le n$,
\begin{equation*}
G(z) =
\left[
\begin{array}{c|c}
A & B\\
\hline
C & 0
\end{array}
\right],
\quad
K_0(z) =
\left[
\begin{array}{c|c}
A_K & B_K\\ \hline
C_K & D_K
\end{array}
\right].
\end{equation*}
There are two main structures for discrete-time observers of the plant $G(z)$ \citep{AA1999,Tei2008}.  These differ in whether the current measurements affect the current state estimate or not.

\begin{dfn}[Filter form observer]  A \emph{filter} structure discrete time observer provides an \emph{a posteriori} filtered estimate of the current plant state, and takes the form
\begin{align*}
\hat{x}(k+1|k) & = (A-AK_fC)\hat{x}(k|k-1)+Bu(k) + AK_fy(k)\\
\hat{x}(k|k) & = (I-K_fC)\hat{x}(k|k-1)+K_fy(k)
\end{align*}
where $K_f$ is an appropriately chosen observer gain matrix and $\hat{x}(k|k)$ is used to compute $u(k)$.
\end{dfn}

\begin{rmk}
Measurements from the current time step contribute to the estimate of the current state.  This is useful if the sampling period is significantly longer than the time required to compute the control action (i.e. can be modelled as a direct feed-through).
\end{rmk}

\begin{dfn}[Predictor-form observer]
The \emph{predictor} structure provides an \emph{a priori} prediction of the plant state by using the output measurements from the previous time step and takes the form
\begin{equation*}
\hat{x}(k+1|k) = (A-K_fC)\hat{x}(k|k-1) + Bu(k) + K_fy(k)
\end{equation*}
where $K_f$ is an appropriately chosen observer gain matrix such that $(A-K_fC)$ is stable.
\end{dfn}

\begin{rmk}
The state estimate $\hat{x}(k|k-1)$ is used for control purposes ---i.e. to compute $u(k)$--- at each time step.  By using the estimate of $x$ at time $k$ given measurements from time $k-1$ as the boundary condition for the beginning of the optimised trajectory, the optimisation associated with the MPC controller can commence before time $k$, allowing a time period equal to the sampling period for the computation to be performed.
\end{rmk}
\begin{rmk}
A controller with a non-zero direct feedthrough term $D_{K}$ (i.e. a controller that is not strictly proper) cannot be directly reproduced using the estimate from a discrete-time predictor.  Techniques to avoid this limitation will be described in Section~\ref{subsec:loopshift}.  In this section, assume that some transformation on (or modification to) the system has already been performed to ensure that the controller is strictly proper, yielding
\begin{equation*}
\tilde{G}(z) =
\left[
\begin{array}{c|c}
\tilde{A} & \tilde{B}\\
\hline
\tilde{C} & 0
\end{array}
\right]\,\quad
\tilde{K}_0(z) =
\left[
\begin{array}{c|c}
\tilde{A}_K & \tilde{B}_K\\
\hline
\tilde{C}_K & 0
\end{array}
\right].
\end{equation*}
\end{rmk}

\begin{thm}
\label{thm:zerosatorigin}
For a filter-form observer-based controller, $K_{\mathrm{obs}}(z) = \left[ \begin{array}{c|c}A_o&B_o\\\hline C_o&D_o\end{array}\right]$, $D_o = C_o A^{-1}_o B_o$, which implies that $K_{\mathrm{obs}}(0) = 0$.
\end{thm}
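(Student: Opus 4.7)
The plan is to compute an explicit state-space realisation $(A_o,B_o,C_o,D_o)$ of $K_{\mathrm{obs}}(z)$ directly from the filter-form equations, and then verify the identity $D_o = C_o A_o^{-1} B_o$ by a short algebraic manipulation; the evaluation $K_{\mathrm{obs}}(0)=D_o - C_o A_o^{-1} B_o = 0$ then follows immediately from the transfer-function formula at $z=0$.

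First, I would take the controller state to be $\hat{x}(k|k-1)$ and eliminate $\hat{x}(k|k)$ from the two filter-form equations using $\hat{x}(k|k) = (I-K_f C)\hat{x}(k|k-1) + K_f y(k)$ and the state-feedback law $u(k) = -K_c \hat{x}(k|k)$. Substituting this into the predictor update produces, after collecting terms,
\begin{align*}
A_o &= (A-BK_c)(I-K_f C), & B_o &= (A-BK_c)K_f,\\
C_o &= -K_c(I-K_f C), & D_o &= -K_c K_f.
\end{align*}
The key observation is that the common factor $(A-BK_c)$ appears on the left of both $A_o$ and $B_o$, while $(I-K_f C)$ appears on the right of $A_o$ and $C_o$. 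Assuming $A-BK_c$ and $I-K_f C$ are non-singular (so that $A_o$ is invertible), one obtains $A_o^{-1} B_o = (I-K_f C)^{-1}(A-BK_c)^{-1}(A-BK_c)K_f = (I-K_f C)^{-1}K_f$, and hence
\begin{equation*}
C_o A_o^{-1} B_o = -K_c(I-K_f C)(I-K_f C)^{-1} K_f = -K_c K_f = D_o,
\end{equation*}
proving the stated identity.

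Finally, since $K_{\mathrm{obs}}(z) = C_o(zI-A_o)^{-1}B_o + D_o$, evaluation at $z=0$ gives $K_{\mathrm{obs}}(0) = -C_o A_o^{-1} B_o + D_o = 0$, establishing that the observer-based controller has a transmission zero at the origin (equivalently, integrators from $y$ to $u$).

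I expect the only genuine obstacle to be the invertibility question: the manipulation $A_o^{-1} = (I-K_f C)^{-1}(A-BK_c)^{-1}$ presupposes that neither factor is singular. Singularity of $I-K_f C$ or $A-BK_c$ corresponds to the closed-loop state estimator or the state-feedback loop having an eigenvalue at the origin, which is avoided by any sensible choice of $K_f$ and $K_c$ (indeed $A_o$ would otherwise have an eigenvalue at $0$, contradicting the requirement of a stable observer in most designs). Beyond this, the argument is entirely linear algebra.
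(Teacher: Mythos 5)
Your proof is correct and follows essentially the same route as the paper: both write down the filter-form realisation, observe that $A_o$ factors as a product of $(A\pm BK_c)$ and $(I-K_fC)$ so that $A_o^{-1}B_o=(I-K_fC)^{-1}K_f$, and cancel to get $C_oA_o^{-1}B_o=D_o$, whence $K_{\mathrm{obs}}(0)=D_o-C_oA_o^{-1}B_o=0$. The only differences are your negative-feedback sign convention ($u=-K_c\hat{x}$ versus the paper's $u=K_c\hat{x}$, which changes nothing) and your explicit flagging of the invertibility of $A-BK_c$ and $I-K_fC$, which the paper leaves implicit.
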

\begin{proof}
Consider the following observer-based controller:
\begin{equation}
K_{\mathrm{obs}}(z) =
\left[
\begin{array}{c|c}
(A+BK_c)(I-K_fC) & (A+BK_c)K_f\\
\hline
K_c(I-K_fC) & K_cK_f
\end{array}
\right].
\label{eqn:filterformk}
\end{equation}
Noting that $A_o^{-1} = (I-K_fC)^{-1}(A+BK_c)^{-1}$ it is clear that $D_o = C_o A^{-1}_o B_o$, which implies that $D_o + C_o\left(zI-A_o\right)^{-1}B_o = 0$, when $z=0$.
\end{proof}

\begin{rmk}
For
%
the observer-based realisation
to be a realisation of a controller $K_0(z)$, $K_0(z)$ must have this same property.  Therefore through term $D_K$ can only be correctly reproduced in discrete-time filter observer-based form if $D_K = C_KA_K^{-1}B_K$
\citep{BF1985}.
\end{rmk}

Given a matrix $T$ with full row rank, let the notation $T^{\dagger}$ indicate a right inverse of $T$, $T^{+}$ indicate the Moore-Penrose pseudo-inverse of $T$, and $T^{\bot}$ indicate a matrix whose columns form an orthonormal basis for the nullspace of $T$.

\begin{lem}
\label{lem:kckf}
Given a matrix $K_c = D_KC + C_KT$ and a matrix $K_f$ such that $AK_f = T^{\dagger}B_K - BD_K$, and assuming $D_K = C_KA_K^{-1}B_K$, $T$ is of full row-rank and $-T(A+BD_KC)-TBC_KT+B_KC+A_KT = 0$, then it holds that $K_cK_f = D_K$.
\end{lem}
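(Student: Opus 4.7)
The plan is to manipulate the four hypotheses into a fixed-point relation of the form $(I - M)(K_cK_f - D_K) = 0$ for a suitable square matrix $M$, and then to appeal to invertibility of $I - M$ to conclude. The main algebraic lever is the structural equation rewritten as a Sylvester-type identity, combined with the fact that $TT^{\dagger} = I$ because $T$ has full row rank.

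First, I would pre-multiply the hypothesis $AK_f = T^{\dagger}B_K - BD_K$ by $T$ to obtain $TAK_f = B_K - TBD_K$. Substituting $K_c = D_KC + C_KT$ into the given structural equation and rearranging yields the Sylvester-type identity $A_KT = T(A+BK_c) - B_KC$. Post-multiplying this identity by $K_f$ and using the expression for $TAK_f$ then gives
\begin{equation*}
A_KTK_f = B_K(I - CK_f) + TB(K_cK_f - D_K).
\end{equation*}

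Since the hypothesis $D_K = C_KA_K^{-1}B_K$ implicitly requires $A_K$ to be invertible, I can solve this for $TK_f$, left-multiply by $C_K$, and use $C_KA_K^{-1}B_K = D_K$ to obtain $C_KTK_f = D_K(I - CK_f) + C_KA_K^{-1}TB(K_cK_f - D_K)$. Substituting back into the decomposition $K_cK_f = D_KCK_f + C_KTK_f$, the two $D_KCK_f$ terms cancel, leaving the clean identity
\begin{equation*}
(I - C_KA_K^{-1}TB)(K_cK_f - D_K) = 0.
\end{equation*}

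The remaining step, and the one I expect to be the main obstacle, is verifying that $I - C_KA_K^{-1}TB$ is nonsingular (equivalently, by Sylvester's determinant identity, that $A_K - TBC_K$ is nonsingular, since $A_K$ is invertible). This invertibility does not follow from the stated hypotheses by pure algebra and must be supplied either from properties of the particular $T$ delivered by the observer-compensator construction referenced in the appendix, or from a well-posedness argument on the closed loop (noting in passing that $K_0(0) = -C_KA_K^{-1}B_K + D_K = 0$, which does constrain the low-frequency structure). Once nonsingularity is granted, the conclusion $K_cK_f = D_K$ is immediate.
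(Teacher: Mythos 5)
Your algebra checks out and the conclusion follows, but your proof is organised differently from the paper's, and the differences are worth noting. The paper proves the identity by a direct forward computation: it writes $K_f = A^{-1}(T^{\dagger}B_K - BD_K)$ (so it silently uses $\det(A)\neq 0$, an assumption carried by Theorem~\ref{thm:filter} but not restated in the lemma), substitutes $D_K = C_KA_K^{-1}B_K$ to factor $K_cK_f = C_KA_K^{-1}(B_KC+A_KT)A^{-1}(T^{\dagger}-BC_KA_K^{-1})B_K$, rewrites the Riccati equation as $TA = (I-TBC_KA_K^{-1})(B_KC+A_KT)$, and then cancels $(I-TBC_KA_K^{-1})^{-1}$ against the factor $TT^{\dagger}-TBC_KA_K^{-1} = I - TBC_KA_K^{-1}$. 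Your fixed-point formulation buys two things. First, you never need $A^{-1}$: pre-multiplying the defining relation for $K_f$ by $T$ is enough, so your argument works even when $A$ is singular provided some $K_f$ satisfying $AK_f = T^{\dagger}B_K - BD_K$ exists. Second, you make explicit the invertibility hypothesis that the paper uses without comment: the paper's step $B_KC+A_KT = (I-TBC_KA_K^{-1})^{-1}TA$ requires $I - TBC_KA_K^{-1}$ to be nonsingular, and since $\det(I-TBC_KA_K^{-1}) = \det(I-C_KA_K^{-1}TB) = \det(A_K-TBC_K)/\det(A_K)$, this is exactly the condition you isolate at the end. So the ``main obstacle'' you flag is not a defect of your proof relative to the paper's --- the published proof rests on the same unstated requirement, and in both cases it has to be supplied by well-posedness of the underlying observer--compensator construction (e.g.\ the invariant-subspace choice yielding $T$) rather than by the lemma's stated hypotheses alone.
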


\begin{proof}
\begin{align}
K_cK_f & = (D_KC+C_KT)A^{-1}(T^{\dagger}B_K-BD_K) \nonumber\\
& = (C_KA_K^{-1}B_KC + C_KT)A^{-1}(T^{\dagger}B_K-BC_KA_K^{-1}B_K)\nonumber\\
& = C_K(A_K^{-1}B_KC+T)A^{-1}(T^{\dagger}-BC_KA_K^{-1})B_K\nonumber\\
& = C_KA_K^{-1}(B_KC+A_KT)A^{-1}(T^{\dagger}-BC_KA_K^{-1})B_K.
\end{align}
By rearrangement and factorisation,
\begin{align}
TA & = -TBD_KC - TBC_KT + B_KC+A_KT \nonumber\\
 & = (I-TBC_KA_K^{-1})(B_KC+A_KT) \nonumber\\
\therefore \quad B_KC+A_KT & = (I-TBC_KA_K^{-1})^{-1}TA.
\end{align}
Therefore:
\begin{align}
K_cK_f & =C_KA_K^{-1}(I-TBC_KA_K^{-1})^{-1}TAA^{-1}(T^{\dagger}-BC_KA_K^{-1})B_K \nonumber\\
& = C_KA_K^{-1}(I-TBC_KA_K^{-1})^{-1}(I-TBC_KA_K^{-1})B_K \nonumber\\
& = C_KA_K^{-1}B_K\nonumber\\
& = D_K.
\end{align}
\end{proof}

\begin{thm}
\label{thm:filter}
Let $K_0(z)$ be a stabilising linear-time-invariant controller for the plant $G(z)$ where $n_K \le n$ and $K_0(0) = 0$.  Assume that there exists $T \in \mathbb{R}^{n_K \times n}$ of full row rank satisfying the non-symmetric Riccati equation
\begin{equation}
-T(A+BD_KC)-TBC_KT+B_KC+A_KT = 0
\label{eqn:filtermatch_ric}
\end{equation}
and that $\mathrm{det}(A) \ne 0$ and $\mathrm{det}(A_K) \ne 0$, then $K_0(z)$ can be realised in a filter observer-based form with observer gain $K_f$ such that $AK_f = T^{\dagger}B_K-BD_K$, and state-estimate feedback gain $K_c = D_KC+C_KT$.
\end{thm}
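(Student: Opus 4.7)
The plan is to show that the filter-form observer-based realisation in equation (\ref{eqn:filterformk}), instantiated with the prescribed $K_c$ and $K_f$, has transfer function identically equal to that of $K_0(z)$. Since $K_{\mathrm{obs}}(z)$ has order $n$ while $K_0(z)$ has order $n_K \le n$, I cannot hope for a similarity transformation in general; instead I will exhibit $T$ as the left intertwining map that projects the $n$-dimensional observer dynamics onto the $n_K$-dimensional minimal realisation of $K_0$. Because $K_0(0)=0$, Theorem~\ref{thm:zerosatorigin} guarantees $D_K = C_K A_K^{-1} B_K$, which is exactly the hypothesis needed to invoke Lemma~\ref{lem:kckf}.

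Denoting $A_o = (A+BK_c)(I-K_fC)$, $B_o = (A+BK_c)K_f$, $C_o = K_c(I-K_fC)$, $D_o = K_c K_f$, the first step is to check the four algebraic identities
\begin{equation*}
D_o = D_K, \qquad C_o = C_K T, \qquad T B_o = B_K, \qquad T A_o = A_K T.
\end{equation*}
The first follows immediately from Lemma~\ref{lem:kckf}. For the second, substitute $K_c = D_KC + C_KT$ and use $D_o=D_K$ to obtain $C_o = D_KC + C_KT - D_K C = C_KT$. For the third, use $AK_f = T^{\dagger}B_K - BD_K$ together with $TT^{\dagger}=I$ (right-inverse, well-defined because $T$ has full row rank) and $K_cK_f=D_K$, giving $TB_o = TAK_f + TBK_cK_f = B_K - TBD_K + TBD_K = B_K$. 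The fourth identity is the one that really uses the non-symmetric Riccati equation (\ref{eqn:filtermatch_ric}): after expanding $T(A+BK_c) = TA + TB(D_KC + C_KT) = B_KC + A_KT$ by rearranging (\ref{eqn:filtermatch_ric}), one multiplies on the right by $(I-K_fC)$ and then uses the just-derived equality $(B_KC+A_KT)K_f = TB_o = B_K$ to cancel the cross term $(B_KCK_f + A_KTK_f)C$ against $B_KC$, leaving exactly $A_K T$.

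With these four identities in hand, the intertwining relation $TA_o = A_K T$ implies $T(zI-A_o) = (zI-A_K)T$, hence $T(zI-A_o)^{-1} = (zI-A_K)^{-1}T$ for all $z$ not in the spectrum of $A_o$ or $A_K$. Therefore
\begin{equation*}
K_{\mathrm{obs}}(z) = D_o + C_o(zI-A_o)^{-1}B_o = D_K + C_KT(zI-A_o)^{-1}B_o = D_K + C_K(zI-A_K)^{-1}B_K = K_0(z),
\end{equation*}
which is the required equality of transfer functions. The invertibility assumptions $\det(A)\ne 0$ and $\det(A_K)\ne 0$ enter only to make sense of $K_f$ through $AK_f = T^{\dagger}B_K - BD_K$ and of the identity $D_K = C_K A_K^{-1}B_K$ used to apply Lemma~\ref{lem:kckf}.

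I expect the main obstacle to be the verification of the intertwining $TA_o = A_K T$. The other three identities are essentially bookkeeping once $T$, $K_c$, and $K_f$ are substituted in, but this last one is where the Riccati equation (\ref{eqn:filtermatch_ric}) must be used non-trivially, and it is the algebraic reason that a solution $T$ of (\ref{eqn:filtermatch_ric}) is precisely what is needed --- neither more nor less --- to embed $K_0$ into an $n$-dimensional filter-form observer-based controller.
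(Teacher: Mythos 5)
Your proof is correct, and it reaches the decisive identity $T(A+BK_c)(I-K_fC)=A_KT$ by exactly the same algebra as the paper (the paper's step ``$TM_1=A_KT$''), but you package the argument differently. The paper performs an explicit similarity transformation of the $n$-dimensional controller (\ref{eqn:filterformk}) under $\begin{bmatrix}T^T & T^{\bot}\end{bmatrix}^T$ and identifies every block of the result with a non-minimal realisation (\ref{eqn:augfilt}) of $K_0(z)$, including the complementary blocks $A_{EK}$, $A_E$, $B_E$; you instead establish only the four intertwining relations $D_o=D_K$, $C_o=C_KT$, $TB_o=B_K$, $TA_o=A_KT$ and conclude transfer-function equality directly from $T(zI-A_o)^{-1}=(zI-A_K)^{-1}T$. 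Your route is shorter and avoids constructing the extra dynamics altogether; what the paper's route buys is the explicit formula for $A_E$ (and $A_{EK}$, $B_E$), which is not needed for the theorem itself but is reused in Theorem~\ref{thm:freepolesassign} to characterise and assign the $n-n_K$ additional observer modes. Two small points of hygiene: the fact $D_K=C_KA_K^{-1}B_K$ follows directly from $K_0(0)=D_K-C_KA_K^{-1}B_K=0$ and $\det(A_K)\ne 0$, rather than from Theorem~\ref{thm:zerosatorigin} (which concerns the observer-based controller, with the remark afterwards merely noting that $K_0$ must share the property); and your appeal to Lemma~\ref{lem:kckf} is legitimate since all of its hypotheses are among those of the theorem.
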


\begin{proof}
Consider the (non-minimal) realisation,
\begin{equation}
{K}_0(z) =
\left[
\begin{array}{cc|c}
A_K & 0 & B_K\\
A_{EK} & A_{E} & B_E\\
\hline
C_K & 0 & D_K
\end{array}
\right]
\label{eqn:augfilt}
\end{equation}
where $A_{EK} \in \mathbb{R}^{(n-n_K) \times n_K}$ and $A_{E} \in \mathbb{R}^{(n-n_K)\times(n-n_K)}$ and $B_{E} \in \mathbb{R}^{(n-n_K) \times n_y}$ have arbitrary values.
Given that $T$ is of full row rank, the matrix $\begin{bmatrix}T^T & T^{\bot}\end{bmatrix}^T$ is invertible, with inverse $\begin{bmatrix}T^{+} & T^{\bot}\end{bmatrix}$.  Consider the change of co-ordinates of (\ref{eqn:filterformk}) which gives
\begin{equation}
K_{\mathrm{obs}}(z) =
\left[
\begin{array}{c|c}
A_{\mathrm{obs}} & B_{\mathrm{obs}}\\
\hline
C_{\mathrm{obs}} & D_{\mathrm{obs}}
\end{array}
\right]
\label{eqn:augobsmatfiilter}
\end{equation}
where
\begin{subequations}
\begin{equation}
A_{\mathrm{obs}} =
\begin{bmatrix}
TM_1T^+ & TM_1T^{\bot}\\
T^{\bot^T}M_1T^+ & T^{\bot^T}M_1T^{\bot}
\end{bmatrix}
\end{equation}
with 
\begin{align}
M_1 
& = A - AK_fC + BK_c - BK_cK_fC
\end{align}
and
\begin{align}
B_\mathrm{obs} & = \begin{bmatrix}TAK_f + TBK_cK_f\\ T^{\bot^T}AK_f + T^{\bot^T}BK_cK_f\end{bmatrix}\\
C_\mathrm{obs} & = \begin{bmatrix}(K_c - K_cK_fC)T^+ & (K_c - K_cK_fC)T^{\bot}\end{bmatrix}\\
D_\mathrm{obs} & = K_cK_f = D_K.
\end{align}
Let $K_c = D_KC+C_KT$ and $AK_f = T^{\dagger}B_K-BD_K$ then
\begin{equation}
TM_1 = TA + TBD_KC - B_KC + TBC_KT
\end{equation}
\begin{equation}
B_\mathrm{obs}  =
\begin{bmatrix}
B_K\\
T^{\bot^T}T^{\dagger}B_K
\end{bmatrix}
\end{equation}
and
\begin{align}
C_{\mathrm{obs}} & =
\begin{bmatrix}
(K_c - D_KC)T^+ & (K_c - D_KC)T^{\bot}
\end{bmatrix} \nonumber\\
& = 
\begin{bmatrix}
C_K & 0
\end{bmatrix}.
\end{align}
\end{subequations}
If (\ref{eqn:filtermatch_ric}) holds then $TM_1 = A_KT$ and $TM_1T^+ = A_K$, and $TM_1T^{\bot} = 0$.
System  (\ref{eqn:filterformk}) is related to (\ref{eqn:augobsmatfiilter}) by similarity transformation, and (\ref{eqn:augobsmatfiilter}) is equal to
(\ref{eqn:augfilt}) with
\begin{subequations}
\begin{align}
A_{EK} & = T^{\bot^T}(A-T^{\dagger}B_KC +BD_KC + BD_KC + BC_KT - BD_KC)T^+\nonumber\\
& = T^{\bot^T}(A+BD_KC - T^{\dagger}BC_K)T^+ + T^{\bot^T}BC_K\\
A_{E} & = T^{\bot^T}(A - AK_fC + BK_c - BK_cK_fC)T^{\bot}\nonumber\\
& = T^{\bot^T}(A - T^{\dagger}B_KC - BD_KC + BC_KT + BD_KC - BD_KC)T^{\bot}\nonumber\\
& = T^{\bot^T}(A+BD_KC - T^{\dagger}B_KC)T^{\bot}\\
B_{E} & =T^{\bot^T}T^{\dagger}B_K
\end{align}
\end{subequations}
which in turn is a (non-minimal) realisation of $K_0(z)$. Therefore (\ref{eqn:filterformk}) is a realisation of $K_0(z)$.
\end{proof}

\begin{thm}
\label{thm:predictor}
Let $\tilde{K}_0(z)$ be a strictly proper, stabilising linear-time-invariant controller  for the plant $\tilde{G}(z)$ where $n_K \le n$.  Assume  that there exists $T \in \mathbb{R}^{n_K \times n}$ of full row rank satisfying the non-symmetric Riccati equation
\begin{equation}
-T\tilde{A} - T \tilde{B}\tilde{C}_KT + \tilde{B}_K\tilde{C}
+ \tilde{A}_KT = 0.
\label{eqn:predictorform_ric}
\end{equation}
Then $\tilde{K}_0(z)$ can be realised in a discrete-time predictor observer-based form with the observer gain $K_f = T^{\dagger}\tilde{B}_K$ and the state-estimate feedback gain $K_c = \tilde{C}_KT$.
\end{thm}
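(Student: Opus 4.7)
The plan is to mirror the structure of the filter-form proof (Theorem \ref{thm:filter}), but on the simpler predictor realisation, where the absence of a direct-feedthrough term removes most of the algebraic clutter. First I would write the candidate realisation of the predictor observer-based controller with the proposed gains substituted in, namely
\begin{equation*}
K_{\mathrm{obs}}(z) =
\left[
\begin{array}{c|c}
\tilde{A} + \tilde{B}K_c - K_f\tilde{C} & K_f \\ \hline
K_c & 0
\end{array}
\right]
=
\left[
\begin{array}{c|c}
\tilde{A} + \tilde{B}\tilde{C}_KT - T^{\dagger}\tilde{B}_K\tilde{C} & T^{\dagger}\tilde{B}_K \\ \hline
\tilde{C}_KT & 0
\end{array}
\right],
\end{equation*}
so that the task is reduced to showing this has the same transfer function as $\tilde{K}_0(z)$.

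Next I would introduce the same similarity transformation as in the filter case, using $P = [T^T~T^{\bot}]^T$ with inverse $P^{-1} = [T^{+}~T^{\bot}]$, which is well-defined because $T$ has full row rank. Applying $P$ to the realisation above block-partitions the state into a component in the row-space of $T$ and a component in its orthogonal complement. Denoting the transformed $A$-matrix by $M = \tilde A + \tilde B\tilde C_K T - T^{\dagger}\tilde B_K\tilde C$, the key identity to extract is $TM = \tilde{A}_K T$: this follows by multiplying the Riccati equation (\ref{eqn:predictorform_ric}) on the right and using $TT^{\dagger} = I$. From $TM = \tilde{A}_K T$ one immediately gets $TMT^{+} = \tilde{A}_K$ and $TMT^{\bot} = 0$, so the transformed $A$-matrix is block upper-triangular with $\tilde{A}_K$ in the $(1,1)$ block. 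The transformed $B$-matrix is $[TK_f^T~(T^{\bot T}K_f)^T]^T$, whose top block is $TT^{\dagger}\tilde{B}_K = \tilde{B}_K$, and the transformed $C$-matrix is $\tilde C_K T [T^{+}~T^{\bot}] = [\tilde C_K~0]$.

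Finally, I would conclude that the transformed realisation has the form
\begin{equation*}
\left[
\begin{array}{cc|c}
\tilde{A}_K & 0 & \tilde{B}_K \\
A_{EK} & A_E & B_E \\ \hline
\tilde{C}_K & 0 & 0
\end{array}
\right],
\end{equation*}
where the lower blocks are unobservable from the output and hence contribute nothing to the transfer function. The observable part coincides with the minimal realisation of $\tilde{K}_0(z)$, so $K_{\mathrm{obs}}(z) = \tilde{K}_0(z)$, as required. The main obstacle is the algebraic step $TM = \tilde{A}_K T$, but unlike the filter-form case there is no $D_K$ to carry through and no analogue of Lemma \ref{lem:kckf} is needed, so this amounts to a direct substitution of the Riccati identity; the rest is bookkeeping with the block structure induced by $T$ and $T^{\bot}$.
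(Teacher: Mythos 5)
Your proposal is correct and follows essentially the same route as the paper: the same similarity transformation $[T^T~T^{\bot}]^T$ with inverse $[T^{+}~T^{\bot}]$, the same key identity $TM=\tilde{A}_KT$ extracted from the Riccati equation via $TT^{\dagger}=I$, and the same conclusion that the transformed realisation is block-triangular with the extra states unobservable, hence a (non-minimal) realisation of $\tilde{K}_0(z)$. The only cosmetic difference is that you package the key step as the single intertwining relation $TM=\tilde{A}_KT$, whereas the paper verifies the $(1,1)$ and $(1,2)$ blocks separately; the content is identical.
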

\begin{proof}
The unconstrained observer-based controller is of the form:
\begin{equation}
\tilde{K}_{\mathrm{obs}}(z) =
\left[
\begin{array}{c|c}
\tilde{A} - K_f \tilde{C} + \tilde{B}K_c & K_f\\
\hline
K_c & 0
\end{array}
\right].
\label{eqn:obsbased1}
\end{equation}
Consider a non-minimal realisation of $\tilde{K}_0(z)$ with state vector $\begin{bmatrix}x_K^T & x_E^T\end{bmatrix}^T$ of the form
\begin{equation}
\tilde{K}_0(z) =
\left[
\begin{array}{cc|c}
A_K & 0 & B_K\\
A_{EK} & A_{E} & B_E\\
\hline
C_K & 0 & 0
\end{array}
\right]
\label{eqn:augpred}
\end{equation}
where $A_{EK} \in \mathbb{R}^{(n-n_K) \times n_K}$, $A_{E} \in \mathbb{R}^{(n-n_K)\times(n-n_K)}$ and $B_{E} \in \mathbb{R}^{(n-n_K) \times n_y}$.
Given that $T$ is of full row rank, the matrix $\begin{bmatrix}T^T & T^{\bot}\end{bmatrix}^T$ is invertible, with inverse $\begin{bmatrix}T^{+} & T^{\bot}\end{bmatrix}$.  Consider the similarity transformation on (\ref{eqn:obsbased1}) which gives
\begin{equation}
\tilde{K}_{\mathrm{obs}}(z) =
\left[
\begin{array}{c|c}
A_{\mathrm{obs}} & B_{\mathrm{obs}}\\
\hline
C_{\mathrm{obs}} & 0
\end{array}
\right]
\label{eqn:augobsmat}
\end{equation}
where
\begin{subequations}
\begin{equation}
A_{\mathrm{obs}}  =
\begin{bmatrix}
TM_1T^+ & TM_1T^{\bot}\\
T^{\bot^T}M_1T^+ & T^{\bot^T}M_1T^{\bot}
\end{bmatrix}
\end{equation}
with $M_1 = (\tilde{A} - K_f\tilde{C} + \tilde{B}K_c)$,
\begin{align}
B_{\mathrm{obs}} & = 
\begin{bmatrix}
T K_f\\
T^{\bot^T}K_f
\end{bmatrix}\\
C_{\mathrm{obs}} & =
\begin{bmatrix}
K_c T^+ & K_c T^{\bot}
\end{bmatrix}.
\end{align}
\end{subequations}
By substituting $K_c = C_KT$ and $K_f = T^{\dagger}B_K$,
\begin{subequations}
\begin{equation}
A_{\mathrm{obs}} =
\begin{bmatrix}
M_2T^+ + T\tilde{B}C_K & M_2T^{\bot}\\
T^{\bot^T}M_3T^+ + \tilde{B}\tilde{C}_K & T^{\bot^T}M_3T^{\bot}
\end{bmatrix}
\end{equation}
with $M_2 = (T\tilde{A} - \tilde{B}_K\tilde{C})$, and $M_3 = (\tilde{A} - T^{\dagger}\tilde{B}_K\tilde{C})$,
\begin{align}
B_{\mathrm{obs}} & = 
\begin{bmatrix}
B_K\\
T^{\bot^T}T^{\dagger}B_K
\end{bmatrix}\\
C_{\mathrm{obs}} & =
\begin{bmatrix}
C_K & 0
\end{bmatrix}.
\end{align}
\end{subequations}
If $T\tilde{A} + T\tilde{B}\tilde{C}_KT - \tilde{B}_K\tilde{C} = A_KT$ then it follows that
\begin{equation}
(T\tilde{A} - \tilde{B}_K\tilde{C})T^+ + T\tilde{B}C_K = A_K.
\end{equation}
It also follows that
\begin{align}
(T\tilde{A} - \tilde{B}_K\tilde{C})T^{\bot} & = (A_KT - T\tilde{B}\tilde{C}_KT)T^{\bot} \nonumber\\
& = 0.
\end{align}
The observer-based controller (\ref{eqn:obsbased1}) with the specified $K_c$, $K_f$ and conditions on $T$ is related by similarity transformation to a system that is identical to (\ref{eqn:augpred}) when:
\begin{subequations}
\begin{align}
B_{E} & = T^{\bot^T}T^{\dagger}B_K\\
A_{EK} & = T^{\bot^T}(\tilde{A} - T^{\dagger}\tilde{B}_K\tilde{C})T^+ + \tilde{B}\tilde{C}_K\\
A_{E} & = T^{\bot^T}(\tilde{A} - T^{\dagger}\tilde{B}_K\tilde{C})T^{\bot}.
\end{align}
\end{subequations}
\end{proof}

\begin{rmk}
Equations (\ref{eqn:filtermatch_ric}) and (\ref{eqn:predictorform_ric}) can more conveniently be written in matrix form
\begin{equation}
\begin{bmatrix}
-T & I
\end{bmatrix}
A_{\mathrm{cl}}
\begin{bmatrix}
I\\T
\end{bmatrix}
= 0
\label{eqn:genric}
\end{equation}
where $A_{\mathrm{cl}}$ is the state update matrix for the original closed loop system,
\begin{equation}
A_{\mathrm{cl}} =
\begin{bmatrix}
A+BD_KC & BC_K\\
B_KC & A_K
\end{bmatrix}
\text{ or }
\begin{bmatrix}
\tilde{A} & \tilde{B}\tilde{C}_K\\
\tilde{B}_K\tilde{C} & \tilde{A}_K
\end{bmatrix}
.
\end{equation}
\end{rmk}
When $n_K < n$, the observer-based controller realisation has $n-n_K$ more closed loop modes than the original controller.  These correspond to the eigenvalues, with corresponding eigenvectors in the null-space of $T$. Whilst the dynamics of the output of the observer-based controller do not depend on these,  once $K_c = C_KT+D_KC$ is replaced by a constrained MPC controller, this will no longer be the case.  The value of $K_f$ is non-unique because $T^{\dagger}$ is non-unique.  Matrix $A_{E}$ is also non-unique for both of the described formulations.  Theorem~\ref{thm:freepolesassign} shows how these extra poles that are introduced into the closed loop system are determined by $T^{\dagger}$ and that they may be used to tune closed-loop constrained performance when constrained MPC is used instead of static gain $K_{c}$.

\begin{thm}
\label{thm:freepolesassign}
The $n-n_K$ additional modes in the observer error dynamics can be determined by the choice of a predictor-form observer gain for the plant
\begin{equation}
\left[
\begin{array}{c|c}
A+BD_KC & T^{\bot^T}{B}\\
\hline
{B}_K{C}T^{\bot} & 0
\end{array}
\right] 
\text{ or }
\left[
\begin{array}{c|c}
\tilde{A} & T^{\bot^T}\tilde{B}\\
\hline
\tilde{B}_K\tilde{C}T^{\bot} & 0
\end{array}
\right].
\label{eqn:freepolesystem}
\end{equation}
\end{thm}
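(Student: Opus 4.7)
The plan is to continue the similarity-transformation calculation from the proofs of Theorems~\ref{thm:filter} and~\ref{thm:predictor} and then exploit the non-uniqueness of the right inverse $T^{\dagger}$. I will describe the argument for the predictor form; the filter case is identical after replacing $\tilde A$ with $A+BD_KC$ and using~(\ref{eqn:filtermatch_ric}) in place of~(\ref{eqn:predictorform_ric}).

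First I would observe that, after transforming by $[T^T\;T^{\bot}]^T$, the $A$-matrix of the non-minimal observer-based realisation is block lower triangular with diagonal blocks $A_K$ and
\begin{equation*}
A_E = T^{\bot^T}\bigl(\tilde A - T^{\dagger}\tilde B_K\tilde C\bigr)T^{\bot};
\end{equation*}
annihilation of the upper-right block is precisely what~(\ref{eqn:predictorform_ric}) buys, and since the observer-error update $\tilde A-K_f\tilde C$ differs from this $A$-matrix only by the term $\tilde B\tilde C_K T$, which vanishes when compressed by $T^{\bot}$ on the right, the $n-n_K$ additional modes of both the error dynamics and the controller realisation coincide with the eigenvalues of $A_E$.

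Next I would parameterise the design freedom. Since $T$ has full row rank, every right inverse can be written $T^{\dagger}=T^{+}+T^{\bot}L$ with $L\in\mathbb{R}^{(n-n_K)\times n_y}$ free; the fact that $[T^{+}\;T^{\bot}]$ is the inverse of $[T^T\;T^{\bot}]^T$ then forces $T^{\bot^T}T^{+}=0$ and $T^{\bot^T}T^{\bot}=I$. Substituting into the expression for $A_E$ yields
\begin{equation*}
A_E = T^{\bot^T}\tilde A T^{\bot} - L\bigl(\tilde B_K\tilde C T^{\bot}\bigr),
\end{equation*}
which has exactly the form of the predictor-form error-dynamics matrix $\bar A-L\bar C$ of an observer designed for the auxiliary plant displayed in~(\ref{eqn:freepolesystem}), with $L$ playing the role of the observer gain.

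The main obstacle I anticipate is interpretational rather than computational: the auxiliary plant in~(\ref{eqn:freepolesystem}) must be understood as the order-$(n-n_K)$ system obtained by compressing the full data with $T^{\bot}$ on the right and $T^{\bot^T}$ on the left, so that the genuine design freedom really lives in the $(n-n_K)$-dimensional kernel of $T$. Once this is granted, standard pole-placement arguments---under observability of the compressed pair, which is inherited from the stabilising property of $K_0$---show that $L$ can be chosen to place the $n-n_K$ eigenvalues of $A_E$ at any desired self-conjugate locations, completing the theorem.
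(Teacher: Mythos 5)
Your proposal is correct and takes essentially the same route as the paper: you parameterise the right inverse as $T^{\dagger}=T^{+}+T^{\bot}L$, substitute into $A_E$, and use $T^{\bot^T}T^{+}=0$ and $T^{\bot^T}T^{\bot}=I$ to arrive at the observer-design form $\bar A - L\bar C$, which is exactly the paper's computation with $X$ in place of your $L$ (the paper works the filter form and declares the predictor form analogous, where you do the reverse). Your additional remarks --- that the error dynamics restricted to the nullspace of $T$ coincide with $A_E$, and that (\ref{eqn:freepolesystem}) must be read as the $T^{\bot}$-compressed order-$(n-n_K)$ system --- are correct glosses on steps the paper leaves implicit.
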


\begin{proof}
The $n-n_K$ additional modes in observer error dynamics are associated with an invariant subspace of $A+BD_KC-AK_fC$ in the nullspace of $T$ and are determined by the eigenvalues of $A_E$.  Given that $AK_f = T^{\dagger}B_K - BD_K$ and that $T^{\dagger} = T^+ + T^{\bot}X$,
\begin{align}
A_E & = T^{\bot^T}\left(A - T^{\dagger}B_KC + BD_KC\right)T^{\bot} \nonumber\\
& = T^{\bot^T}\left(A - (T^{+}+T^{\bot}X){B}_K{C} + BD_KC\right)T^{\bot} \nonumber\\
& = T^{\bot^T}(A+BD_KC)T^{\bot} - XB_KCT^{\bot}.
\end{align}
The matrix $X$ is then treated as an observer gain, and can be designed by pole placement, or by Kalman filter methods on the system (\ref{eqn:freepolesystem}).
The proof for the predictor form is analogous.
\end{proof}

There are multiple solutions to the non-symmetric Riccati equations, which can be obtained using invariant subspace methods \citep{Lau1979}.
\begin{thm}
If the plant model $G(z)$ has $n$ states, and the original controller $K_0(z)$ has $n_K$ states, then (letting $\mathrm{Im}(\cdot)$ denote the image operator), given an $n$-dimensional invariant subspace, $\mathcal{S} \subset \mathbb{C}^{n+n_K}$ of $A_{cl}$,
\begin{equation}
\mathcal{S} \triangleq \mathrm{Im}
\begin{bmatrix}
|& & |\\
u_1 & \cdots & u_n\\
|&&|
\end{bmatrix}
\end{equation}
the columns can be partitioned vertically so that
\begin{equation}
\begin{bmatrix}
|& & |\\
u_1 & \cdots & u_n\\
|&&|
\end{bmatrix} = \begin{bmatrix}U_1\\U_2\end{bmatrix}
\label{eqn:U1U2}
\end{equation}
where $U_1 \in \mathbb{C}^{n\times n}$ and $U_2 \in \mathbb{C}^{n_K\times n}$.  $T = U_2U_1^{-1}$ is a solution to the non-symmetric Riccati equation (\ref{eqn:filtermatch_ric}) or (\ref{eqn:predictorform_ric}).
\end{thm}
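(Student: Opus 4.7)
The plan is to exploit the compact form of the non-symmetric Riccati equation given in Remark~(\ref{eqn:genric}), namely $\begin{bmatrix}-T & I\end{bmatrix} A_{\mathrm{cl}} \begin{bmatrix}I\\T\end{bmatrix} = 0$, and to show directly that $T=U_2U_1^{-1}$ makes this expression vanish whenever $\mathrm{Im}\begin{bmatrix}U_1\\U_2\end{bmatrix}$ is $A_{\mathrm{cl}}$-invariant.

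First, I would observe that invariance of $\mathcal{S}$ under $A_{\mathrm{cl}}$ means there exists an $n\times n$ matrix $\Lambda$ (the representation of $A_{\mathrm{cl}}|_{\mathcal{S}}$ in the chosen basis) such that
\begin{equation*}
A_{\mathrm{cl}}\begin{bmatrix}U_1\\U_2\end{bmatrix} = \begin{bmatrix}U_1\\U_2\end{bmatrix}\Lambda.
\end{equation*}
Assuming $U_1$ is invertible (which is implicit in the statement $T=U_2U_1^{-1}$), I can right-multiply both sides by $U_1^{-1}$ to obtain
\begin{equation*}
A_{\mathrm{cl}}\begin{bmatrix}I\\U_2U_1^{-1}\end{bmatrix} = \begin{bmatrix}I\\U_2U_1^{-1}\end{bmatrix}\bigl(U_1\Lambda U_1^{-1}\bigr).
\end{equation*}
Substituting $T=U_2U_1^{-1}$, the right-hand factor $\begin{bmatrix}I\\T\end{bmatrix}$ now matches precisely the column block appearing in the compact Riccati form.

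Next, I would left-multiply this identity by $\begin{bmatrix}-T & I\end{bmatrix}$. The right-hand side becomes
\begin{equation*}
\begin{bmatrix}-T & I\end{bmatrix}\begin{bmatrix}I\\T\end{bmatrix}(U_1\Lambda U_1^{-1}) = (-T+T)(U_1\Lambda U_1^{-1}) = 0,
\end{equation*}
so the left-hand side, $\begin{bmatrix}-T & I\end{bmatrix}A_{\mathrm{cl}}\begin{bmatrix}I\\T\end{bmatrix}$, is also zero. By (\ref{eqn:genric}) this is exactly the Riccati equation (\ref{eqn:filtermatch_ric}) or (\ref{eqn:predictorform_ric}) depending on which block structure of $A_{\mathrm{cl}}$ is used, completing the argument.

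The step I expect to require the most care is the implicit invertibility of $U_1$: the theorem does not state it explicitly, but $T=U_2U_1^{-1}$ only makes sense when $U_1$ is nonsingular, so I would either state this as an additional hypothesis on the invariant subspace or, more properly, note that different choices of basis for $\mathcal{S}$ yield the same $T$ (since $U_1\mapsto U_1M$, $U_2\mapsto U_2M$ leaves $U_2U_1^{-1}$ invariant), so the genuine requirement is that $\mathcal{S}$ be complementary to the subspace $\{0\}\times\mathbb{C}^{n_K}$. Beyond this point, the remaining algebra is essentially a one-line cancellation, and the full-row-rank condition on $T$ required by Theorems~\ref{thm:filter} and \ref{thm:predictor} would be verified separately by the choice of invariant subspace.
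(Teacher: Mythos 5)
Your proposal is correct and follows essentially the same route as the paper: invoke the invariance relation $A_{\mathrm{cl}}\begin{bmatrix}U_1\\U_2\end{bmatrix}=\begin{bmatrix}U_1\\U_2\end{bmatrix}\Lambda$, postmultiply by $U_1^{-1}$, and premultiply by $\begin{bmatrix}-T & I\end{bmatrix}$ so the right-hand side cancels. Your added remark that $U_1$ must be nonsingular (equivalently, that $\mathcal{S}$ is complementary to $\{0\}\times\mathbb{C}^{n_K}$) is a point the paper leaves implicit, and is worth stating.
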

\begin{proof}
Because $\begin{bmatrix}U_{1}^T &U_{2}^T\end{bmatrix}^T$ is an invariant subspace,
\begin{equation}
\begin{bmatrix}
A+BD_KC & BC_K\\
B_KC & A_K
\end{bmatrix}
\begin{bmatrix}
U_1\\
U_2
\end{bmatrix}
=
\begin{bmatrix}
U_1\\
U_2
\end{bmatrix}
\Lambda.
\end{equation}
Postmultiplying both sides by $U_1^{-1}$ yields
\begin{equation}
\begin{bmatrix}
A+BD_KC & BC_K\\
B_KC & A_K
\end{bmatrix}
\begin{bmatrix}
I\\
U_2U_1^{-1}
\end{bmatrix}
=
\begin{bmatrix}
I\\
U_2U_1^{-1}
\end{bmatrix}
U_1 \Lambda U_1^{-1}.
\label{eqn:u1eigenvector}
\end{equation}

By defining $T = U_{2}U_{1}^{-1}$ and premultiplying both sides by $\begin{bmatrix}-T & I\end{bmatrix}$, it follows that $T$ is the solution to the non-symmetric Riccati equation
\begin{equation}
\begin{bmatrix}
-T & I
\end{bmatrix}
\overbrace{
\begin{bmatrix}
A+BD_KC & BC_K\\
B_KC & A_K
\end{bmatrix}}^{A_{\mathrm{cl}}}
\begin{bmatrix}
I\\
T
\end{bmatrix}
=
0.
\end{equation}
\end{proof}

\begin{thm}
\label{thm:clpoles}
The poles of the pure state feedback system, $(A+BK_c)$ with $K_c$ calculated as in Theorem~\ref{thm:filter} or \ref{thm:predictor} are equal to the eigenvalues corresponding to the eigenvectors which comprise the invariant subspace $\mathcal{S} = \mathrm{Im}[U_1^T, U_2^T]^T$ if $T = U_2U_1^{-1}$.
\end{thm}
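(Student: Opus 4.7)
The plan is to read off the closed-loop matrix $A+BK_c$ directly from the invariant-subspace identity already assembled in the preceding theorem, so no fresh calculation with the Riccati equation is needed.

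First I would start from equation~(\ref{eqn:u1eigenvector}), which expresses the invariance of $\mathcal{S}$ as
\begin{equation*}
\begin{bmatrix}
A+BD_KC & BC_K\\
B_KC & A_K
\end{bmatrix}
\begin{bmatrix} I\\ T \end{bmatrix}
=
\begin{bmatrix} I\\ T \end{bmatrix} U_1 \Lambda U_1^{-1},
\end{equation*}
where $\Lambda$ is the restriction of $A_{\mathrm{cl}}$ to $\mathcal{S}$ in the basis $U_1,\dots,U_n$ and hence carries precisely the eigenvalues corresponding to $\mathcal{S}$. Reading off the upper block row gives the identity
\begin{equation*}
A + BD_KC + BC_K T = U_1 \Lambda U_1^{-1}.
\end{equation*}

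Next I would invoke Theorem~\ref{thm:filter}, which fixes $K_c = D_K C + C_K T$, so the right-hand side of the preceding identity is exactly $A+BK_c$. Consequently $A+BK_c$ is similar to $\Lambda$, and its spectrum coincides with the eigenvalues associated with the invariant subspace $\mathcal{S}$. For the predictor-form case of Theorem~\ref{thm:predictor} the argument is identical after setting $D_K=0$, $K_c = \tilde{C}_K T$, and using the predictor-form $A_{\mathrm{cl}}$ in~(\ref{eqn:genric}); the top block row then yields $\tilde{A} + \tilde{B}\tilde{C}_K T = U_1 \Lambda U_1^{-1}$, i.e.\ $\tilde{A}+\tilde{B}K_c = U_1 \Lambda U_1^{-1}$.

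There is no real obstacle here: the theorem is essentially a corollary of the invariant-subspace construction of $T$. The only small point requiring care is to be explicit that the $n$ eigenvalues of $\Lambda$ are exactly those of $A_{\mathrm{cl}}|_{\mathcal{S}}$, so that ``the eigenvalues corresponding to the eigenvectors which comprise the invariant subspace'' is unambiguously identified with $\mathrm{spec}(\Lambda) = \mathrm{spec}(A+BK_c)$, even when $\Lambda$ is not diagonalisable and $U_1,\ldots,U_n$ form a Jordan rather than an eigenvector basis for $\mathcal{S}$.
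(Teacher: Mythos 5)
Your proposal is correct and follows the paper's own argument exactly: the paper likewise reads the top block row of equation~(\ref{eqn:u1eigenvector}) to obtain $A+BD_KC+BC_KT = A+BK_c = U_1\Lambda U_1^{-1}$ and concludes $\sigma(A+BK_c)=\sigma(\Lambda)$. Your additional remarks on the predictor-form case and on non-diagonalisable $\Lambda$ are sensible refinements but do not change the route.
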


\begin{proof}
By considering (\ref{eqn:u1eigenvector}) it can be seen that
\begin{equation}
A + BD_KC + BC_KT = A+BK_c = U_{1}\Lambda U_{1}^{-1}.
\end{equation}
Therefore, $\sigma(A+BK_c) = \sigma(\Lambda)$.
\end{proof}
\begin{cor}
The remaining $n_K$ closed loop poles from the original system, along with the $n-n_K$ modes introduced if the observer is of higher order than the original controller therefore correspond to the observer error dynamics.
\end{cor}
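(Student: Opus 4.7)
The approach is to exploit the familiar block-triangular structure obtained by changing coordinates from $(x,\hat{x})$ to $(x,e)$ with $e = x - \hat{x}$ in the closed loop formed by the plant with the observer-based controller. In either the predictor or filter realisation this change of basis makes the $2n \times 2n$ closed-loop state matrix block upper triangular, with the $(1,1)$ block equal to $A+BK_c$ (modified by $BD_KC$ in the filter case, but still with the same spectrum since the feedback acts on $\hat{x}$) and the $(2,2)$ block equal to the observer error matrix (i.e.\ $A - K_fC$ for the predictor form, and the corresponding expression for the filter form). Consequently the $2n$ poles of the closed loop with the observer-based controller split cleanly into the $n$ poles of $A+BK_c$ together with the $n$ poles of the observer error dynamics.

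Next I would invoke Theorem~\ref{thm:filter} (respectively \ref{thm:predictor}) to assert that the observer-based controller $K_{\mathrm{obs}}(z)$ is a (possibly non-minimal) realisation of the original controller $K_0(z)$. Since the plant $G(z)$ is unchanged, the $n+n_K$ poles of the original closed loop $(G,K_0)$ must appear among the $2n$ poles of the closed loop $(G,K_{\mathrm{obs}})$, the extra $n-n_K$ poles being the unobservable/uncontrollable modes created by the non-minimal realisation.

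By Theorem~\ref{thm:clpoles}, the $n$ poles of $A+BK_c$ coincide with the $n$ eigenvalues $\sigma(\Lambda)$ selected when choosing the invariant subspace $\mathcal{S}$ of $A_{\mathrm{cl}}$ used to construct $T$. These $n$ eigenvalues are, by definition of $A_{\mathrm{cl}}$, $n$ of the $n+n_K$ poles of the original closed loop. Subtracting, the remaining $n_K$ original closed-loop poles must lie in the spectrum of the observer error matrix; a dimension count then forces the remaining $n - n_K$ eigenvalues of the observer error matrix to be precisely the extra modes, which by Theorem~\ref{thm:freepolesassign} are exactly the freely assignable poles parametrised by the choice of right-inverse $T^{\dagger}$.

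The main obstacle is the bookkeeping in the filter-form case: the change of variables $e = x - \hat{x}$ introduces cross terms through $BD_KC$ and $AK_fC$, and one has to verify carefully that the resulting $(2,2)$ block is indeed the observer-error matrix whose eigenvalues split into the claimed groups. Once that is checked, the rest of the argument is a pure dimension/partition count.
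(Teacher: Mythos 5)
Your argument is correct and is essentially the reasoning the paper leaves implicit: the corollary is stated without proof as an immediate consequence of Theorem~\ref{thm:clpoles} together with the standard separation-principle block-triangular structure in $(x,e)$ coordinates, followed by exactly the dimension count you give. The filter-form ``obstacle'' you flag in fact dissolves cleanly --- since $K_c = D_KC + C_KT$ already absorbs the feedthrough, the $(1,1)$ block comes out as exactly $A+BK_c$ and the $(2,2)$ block as $A(I-K_fC)$, so no further bookkeeping is needed.
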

\begin{rmk}
A real solution $T$ will not exist if the partition of closed-loop poles between pure state feedback and observer dynamics is not compatible with the controllability and observability properties of the original plant.  Also, complex conjugate pole pairs should not be split \citep{BF1985,AA1999}.
\end{rmk}
The resulting observer-based controller using $K_c$ and $K_f$ is a (non-minimal when $n_K < n$) realisation of the original controller.  The closed loop system using the observer based realisation of the controller will contain $n - n_K$ poles that did not exist in the original closed loop system.  These dynamics can be assigned by the system designer through the choice of $T^{\dagger}$ used to calculate $K_f$.

\section{Model predictive controller formulation}
At the heart of every MPC controller is a constrained optimisation problem, where the summation of a stage cost function of the plant input and state is optimised over a prediction horizon of length $N$, subject to input and state constraints \citep{Mac2002,CB2004,RM2009}.

Let $N$ be the length of the prediction horizon, $\ell(x,u)$ be the ``stage cost'' at each time step, and $F_{N}(x)$ be cost on the state at the end of the finite prediction horizon.  Define $\mathbb{X}$, $\mathbb{Y}$, $\mathbb{U}$ and $\mathbb{T}$ to be the set of feasible state values, the set of feasible output values, the set of feasible input values, and the terminal constraint set respectively.  A basic model predictive control formulation is outlined in Algorithm~\ref{alg:basicmpc}, using the shorthand notation $x(k)$ to be the prediction of the state $x$ at $k$ time steps into the future from the current state estimate or measurement, and $u(k)$  analogously.  For notational convenience, define:
\begin{align*}
\vect{x} & = 
\begin{bmatrix}
x(0)^T & \cdots & x(N)^T
\end{bmatrix}^T\\
\vect{u} & =
\begin{bmatrix}
u(0)^T & \cdots & u(N-1)^T
\end{bmatrix}^T.
\end{align*}
\begin{algorithm}[htbp]
\SetAlgoLined
\caption{Model predictive control}
\label{alg:basicmpc}
\While{controller running}{
\nl Sample state measurement or estimate $\hat{x}(t)$.

\nl Solve
\begin{equation*}
\arg \min_{\vect{x},\vect{u}}
F_N\left( x(N) \right) + \sum_{k=0}^{N-1} \ell\left( x(k), \, u(k) \right)
\end{equation*}
subject to plant dynamics
\begin{align*}
x(k+1) & = f\left( x(k), u(k) \right)\\
y(k+1) & = g\left( x(k), u(k) \right)
\end{align*}
and constraints
\begin{align*}
x(0) & = \hat{x}(t) \quad \text{(Current state measurement/estimate)}\\
x(k) & \in \mathbb{X} \quad \forall k \in \left\{0,\ldots,N-1 \right\}\\
y(k) & \in \mathbb{Y} \quad \forall k \in \left\{0,\ldots,N-1 \right\}\\
u(k) & \in \mathbb{U} \quad \forall k \in \left\{0,\ldots,N-1 \right\}\\
x(N) & \in \mathbb{T}.
\end{align*}
\nl Apply $u(0)$ to plant.

\nl Wait sampling time $T_s$.}
\end{algorithm}

For the case of a linear time invariant plant models, as used for the reverse-engineering:
\begin{align*}
f\left( x(k), u(k) \right) & = Ax(k) + Bu(k)\\
g\left( x(k), u(k) \right) & = Cx(k).
\end{align*}

\subsection{Zero-value cost-function}
The reverse engineering procedure hinges upon replacing the static gain $K_c$ with a constrained MPC controller that is, when constraints are not active, equivalent to the estimated state feedback $K_c \hat{x}$ obtained for the discrete-time observer-based controller.  A zero-value infinite horizon cost function can be constructed to ensure that $u(k) = K_c\hat{x}(k)$ is the optimal solution by using a stage cost
\begin{equation}
\ell(x(k),u(k)) =
\begin{bmatrix}
x(k)\\u(k)
\end{bmatrix}^T
\begin{bmatrix}
{K}_c^T R{K}_c & -{K}_c^TR\\
-R{K}_c & R
\end{bmatrix}
\begin{bmatrix}
x(k) \\ u(k)
\end{bmatrix}
\label{eqn:stagecost}
\end{equation}
where $R > 0$ is a weighting matrix, determining the relative importance of matching each input to that provided by the original controller \citep{KJ1972}.  A standard MPC implementation performs an optimisation over a finite, but receding horizon.
A finitely parameterised infinite horizon cost function can be obtained by using the candidate cost function over a finite horizon of length $N$ and using the solution $P$ to the discrete-time algebraic Riccati equation as a terminal quadratic cost weighting \citep{RM1993, CM1996}.
\begin{thm}
When using the stage cost (\ref{eqn:stagecost}), $P=0$ is a solution to the associated discrete time algebraic Riccati equation (DARE).
\end{thm}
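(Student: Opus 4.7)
The key observation is that the stage cost in (\ref{eqn:stagecost}) is a perfect square, namely
\begin{equation*}
\ell(x,u) = (u - K_c x)^T R (u - K_c x),
\end{equation*}
which vanishes precisely when $u = K_c x$. From this form it is already intuitively clear that $P=0$ should work: applying the unconstrained feedback $u(k) = K_c x(k)$ at every step yields zero stage cost at every step, so the optimal cost-to-go from any state is identically zero.

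To make this rigorous, the plan is to substitute $P=0$ directly into the DARE for a cost with a cross-term. Reading off the weights from (\ref{eqn:stagecost}), the state weight is $Q = K_c^T R K_c$, the input weight is $R$, and the cross weight is $S = -K_c^T R$. The standard DARE for this general form is
\begin{equation*}
P = A^T P A + Q - (A^T P B + S)(R + B^T P B)^{-1}(B^T P A + S^T).
\end{equation*}
Setting $P=0$ on the right-hand side collapses all terms containing $P$ to zero, leaving
\begin{equation*}
K_c^T R K_c - (-K_c^T R)\,R^{-1}\,(-R K_c) = K_c^T R K_c - K_c^T R K_c = 0,
\end{equation*}
which matches the left-hand side. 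Hence $P=0$ is a solution.

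The only possible obstacle is a bookkeeping one: ensuring that the DARE is written in the convention consistent with the block cost matrix in (\ref{eqn:stagecost}) (in particular that the cross-term $S$ carries the correct sign and that $R>0$ is invertible, which is assumed). Since $R>0$ the inverse $R^{-1}$ exists, and the verification is a one-line algebraic identity. No stability or detectability argument is required for existence of this particular solution; $P=0$ is simply a valid fixed point of the Riccati recursion, corresponding to the value function of the unconstrained optimal policy $u = K_c x$.
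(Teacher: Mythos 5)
Your proof is correct and matches the paper's in substance: both verify that $P=0$ satisfies the DARE associated with the weights $Q=K_c^TRK_c$, $S=-K_c^TR$, $R$, the essential point being the cancellation $K_c^TRK_c - K_c^TR\,R^{-1}\,RK_c = 0$. The paper reaches the same conclusion by first invoking the optimality condition to reduce the DARE to $A^TP(A+BK_c)-P=0$ and then observing $P=0$ solves it, whereas you substitute $P=0$ directly; this is an equally valid (and arguably more self-contained) route to the identical cancellation.
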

\begin{proof}
The discrete time algebraic Riccati equation associated with stage cost (\ref{eqn:stagecost}) is
\begin{equation}
A^TPA - P - \left(A^TPB - K_c^TR \right)\left( B^TPB + R\right)^{-1}\left( B^TPA - RK_c \right)
 + K_c^TRK_c = 0.
\end{equation}
By construction, the optimal state feedback gain is, $K_c = -\left(B^TPB+R\right)^{-1}\left(B^TPA - RK_c\right)$, so
\begin{align}
A^TPA - P  + \left(A^TPB - K_c^TR \right)K_c + K_c^TRK_c & = 0 \nonumber\\
A^TPA - P + A^TPBK_c & = 0 \nonumber\\
A^TP(A+BK_c) - P & = 0. \label{eqn:ihzero}
\end{align}
By inspection, $P=0$ is a solution.
\end{proof}
\begin{rmk}
Because of this unsurprising result, no terminal cost need be added to the finite horizon optimisation.  
\end{rmk}
\begin{rmk}
When input constraints are active, the MPC implementation is interpreted as ``regulating'' to the region of the state space where $u(k)=K_cx(k)$ is feasible.  This is more intelligent than merely ``clipping'' the control actions on input saturation.  When output constraints are active, the control objective is to avoid constraint violations whilst minimising deviations from the unconstrained control actions over the horizon.
\end{rmk}
When there is input redundancy, if one control actuator fails or saturates, the controller should be capable of using other control inputs to achieve a similar control effect.  Under nominal operating conditions retaining the behaviour of the original controller is also desirable.  To meet these objectives, a possible quadratic cost function is
\begin{equation*}
\ell\left(x(k), u(k) \right) = \left\| Bu(k) - BK_cx(k) \right\|_{Q_1}^{2}
+ \left\| u(k) - K_cx(k) \right\|_{R_1}^{2}
\end{equation*}
for $R_1>0$, and $Q_1 \gg R$.  The first term, tries to achieve the control effect of the original controller, whilst the second term ensures that when feasible, the original control actuator configuration is used.  This cost function can alternatively be expressed in the form of (\ref{eqn:stagecost}) as:
\begin{equation}
\ell\left(x(k), u(k)\right) =
\begin{bmatrix}
K_c^T (R_1+B^T Q_1 B)K_c & -K_c^T(R_1+B^T Q_1B)\\
-(R_1+B^TQ_1B)K_c & (R_1 + B^TQ_1B)
\end{bmatrix}.
\label{eqn:stattrack}
\end{equation}
\begin{rmk}
Despite the resemblance, the predictive nature of the MPC implementation means that there is an anticipatory aspect to the control decision rather than a best-effort attempt to deliver a particular control effect at the current time step as would occur with a pure actuator allocation algorithm.
\end{rmk}

Alternative zero-value cost functions such as
\begin{subequations}
\begin{align}
\ell\left(x(k), u(k) \right) & = \|R(u-K_cx)\|_1\\
\text{ or } \ell\left(x(k), u(k) \right) & = \|R(u-K_cx)\|_{\infty}
\end{align}
\label{eqn:1normcosts}
\end{subequations}%
or a combination of the two, can also be used depending on the desired control objectives when the baseline controller does not satisfy constraints \citep{RR2000}.

\subsection{Other cost functions}
If there happens to be a valid $Q \ge0$ and $R\ge0$, with $S=0$ such that $K_c$ is the optimal infinite-horizon discrete-time state feedback minimising the DLQR cost function
\begin{equation}
\sum_{k=0}^{\infty} x(k)^T Q x(k) + u(k)^T R(u) + 2x(k)^T S u(k)
\end{equation}
 then LMI-based inverse optimality methods \citep{BGFB1994} can be used.  Even if this is not the case, the cross term $S$ can be minimised using LMI-based methods (Algorithm~\ref{alg:mincross}).  In the general case though, the cross-terms are not guaranteed to be driven to zero, however the resulting cost function can provide different constrained closed-loop behaviour.
\begin{algorithm}[h]
For a fixed $\epsilon > 0$, and compatibly sized $\overline{Q}$, $\overline{P}$, $\overline{R}$, $\overline{S}$ minimise $\|S\|_{2}^2$ subject to:
\begin{subequations}
\begin{align}
\overline{Q} &\ge 0\\
\overline{P} &\ge 0\\
\overline{R} &\ge \epsilon I\\
A^T\overline{P}A - \overline{P} - K_c^T(B^T\overline{P}B+R)K_c + \overline{Q} & =  0; \text{ and}\\
(B^T\overline{P}B+\overline{R})K_c +(B^T\overline{P}A+\overline{S}^T) & = 0.
\end{align}
\end{subequations}
\caption{Minimising cross terms using an LMI}
\label{alg:mincross}
\end{algorithm}

An approximate match to the original control gain $K_c$ can be obtained by using Algorithm~\ref{alg:mincross} and setting $\overline{S}=0$ \emph{a posteriori} and calculating:
\begin{equation}
\overline{K}_c = -\text{\tt dlqr}(A,B,\overline{Q},\overline{R}).
\end{equation}
The suitability of the newly synthesised controller for the given application is not guaranteed and would have be verified experimentally.

\begin{rmk}
This method for minimising $\overline{S}$ is not suitable when the cost function (\ref{eqn:stattrack}) has been used for a plant with redundant inputs.  The elements of $\overline{R}$ corresponding to redundant actuators that are not normally used will be forced towards infinity.
\end{rmk}

\begin{rmk}
If the plant model has been augmented with a disturbance model, the disturbance states are uncontrollable.  The cost-function re-shaping should be performed minimising only the cross-terms between inputs and the controllable states.  It is clear that the cross terms between the disturbance states and the input must remain, and in fact, this can be interpreted as an implicit input target calculator, as commonly used for offset-free MPC \citep{MB2002}.  There is, however, still no guarantee that the remaining terms of $\overline{S}$ will be forced to zero.
\end{rmk}

\begin{rmk}
The cost functions (\ref{eqn:stagecost}) and (\ref{eqn:stattrack}) have clear physical interpretations.  On the other hand the results of Algorithm~\ref{alg:mincross} are no so easily interpreted unless the elements of $\overline{S}$ are small in relation to the other matrices or the controller synthesised with $\overline{S}$ artificially set to zero is acceptable.
\end{rmk}

As an alternative, the method of \citep{DB2009,DB2010} can also be directly applied to increase the set of gains $K_c$ that can be matched without cross-terms between state and input, although it is still not guaranteed that a solution will exist for arbitrary $K_c$.

\begin{prop}
If inverse-optimal cost weightings $Q_1$ and $R_0$ cannot be found for $u = K_c x(0)$ to be the optimum solution of the one step horizon control problem  
\begin{equation}
\min_{u(0)} \left(Ax(0)+Bu(0)\right)^T Q_1 \left(Ax(0)+Bu(0)\right) + u_0^TR_0u_0
\label{eqn:nocosttogo}
\end{equation}
then there is no sequence of $(Q_i,R_i)$ over any finite prediction horizon $N$ which will give $u^*(0) = K_c x(0)$.
\end{prop}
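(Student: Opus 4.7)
The plan is to apply dynamic programming backwards through the horizon so that everything after time $k=0$ collapses into a single quadratic cost-to-go, reducing the $N$-step problem to an instance of (\ref{eqn:nocosttogo}). I would then invoke the hypothesis directly on this reduced problem.

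Concretely, first I would define the value functions $V_k(x(k))$ of the tail subproblem from time $k$ to $N$ with weights $(Q_i, R_i)_{i=k}^{N-1}$ and terminal cost $x(N)^T Q_N x(N)$. A standard induction shows each $V_k$ is quadratic, $V_k(x(k)) = x(k)^T P_k x(k)$, with $P_k$ given by the Riccati recursion
\begin{equation*}
P_{k-1} = Q_{k-1} + A^T P_k A - A^T P_k B \bigl( R_{k-1} + B^T P_k B \bigr)^{-1} B^T P_k A,
\end{equation*}
starting from $P_N = Q_N$. Because the Schur complement form of this recursion preserves positive semidefiniteness whenever $Q_{k-1} \succeq 0$ and $R_{k-1} \succeq 0$, one obtains $P_1 \succeq 0$.

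Second, I would substitute $V_1$ back into the full problem. The optimisation at $k=0$ becomes
\begin{equation*}
\min_{u(0)}\; x(0)^T Q_0 x(0) + u(0)^T R_0 u(0) + \bigl( A x(0) + B u(0) \bigr)^T P_1 \bigl( A x(0) + B u(0) \bigr).
\end{equation*}
The term $x(0)^T Q_0 x(0)$ is constant in $u(0)$, so the argmin coincides with the minimiser of (\ref{eqn:nocosttogo}) under the identification $Q_1 \leftarrow P_1$ and with the same $R_0$. Finally, I would close by contrapositive: if some finite horizon $N$ and weight sequence $(Q_i, R_i)$ gave $u^{*}(0) = K_c x(0)$, then the pair $(P_1, R_0)$ with $P_1 \succeq 0$ would be a valid choice of weights making $K_c x(0)$ optimal in (\ref{eqn:nocosttogo}), contradicting the assumption.

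The main obstacle is almost entirely confined to the first step: verifying that the Riccati recursion genuinely produces a PSD $P_1$ so that it is an admissible choice for $Q_1$ in the hypothesis. One must also be mildly careful that $R_0$ is positive definite so the inverse in the recursion exists and the one-step minimiser is unique; since the one-step inverse-optimal formulation in (\ref{eqn:nocosttogo}) already presumes $R_0$ is a valid weighting, this matches up. No further subtleties arise, as cross terms are absent by construction and the horizon-$N$ problem decouples cleanly into the tail cost plus the one-step problem.
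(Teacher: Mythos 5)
Your proof is correct and follows essentially the same route as the paper: the paper's own (much terser) argument simply observes that the optimal cost of the tail subproblem is a quadratic form $x^T P x$, so the $N$-step problem collapses to an instance of the one-step problem with $Q_1 \leftarrow P$. Your version adds the explicit Riccati recursion and the positive-semidefiniteness check on $P_1$, which the paper leaves implicit, but the underlying idea is identical.
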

\begin{proof}
Any finite-horizon optimal control problem with a quadratic cost function (with, or without cross terms between inputs and states and different time steps) will have a closed-form matrix quadratic expression of form $x^T P x$ for the optimum cost.  Therefore, if there is no suitable cost-to-go $Q_1$ for which the minimising control input of (\ref{eqn:nocosttogo}) is $K_c x(0)$ there is no suitable quadratic function of the prediction horizon from $k=1$ to $k=N$ (for arbitrary $N$) either.
\end{proof}

\subsection{Constrained stability}

In the presence of constraints, the (unconstrained) stabilising properties of the original controller might not be inherited.  The usual technique of applying a terminal constraint as in \citep{MRRS2000} can be applied directly if formal stability guarantees are required.

Consider a finite horizon MPC controller using a stage cost function (\ref{eqn:stagecost}), input constraints $u(k) \in \mathbb{U}$, state constraints $x(k) \in \mathbb{X}$, and a terminal constraint $x(N) \in \mathbb{T}$, where $\mathbb{T}$ is a recursively feasible positively invariant set under $K_c$ such that
\begin{subequations}
\begin{align}
x(k) \in \mathbb{T} & \implies K_cx(k) \in \mathbb{U}\\
x(k) \in \mathbb{T} & \implies x(k) \in \mathbb{X}\\
x(k) \in \mathbb{T} & \implies y(k) \in \mathbb{Y}\\
x(k) \in \mathbb{T} & \implies (A+BK_c)x(k) \in \mathbb{T}.
\label{eqn:invsetreveng}
\end{align}
\end{subequations}
Assuming exact plant model matching, and that the observer error has converged to zero, the reverse engineered MPC controller can be interpreted as a special form of \emph{dual-mode} MPC controller, with $x(k)$ guaranteed to enter $\mathbb{T}$ in $N$ steps.  Inside $\mathbb{T}$, by construction, the MPC controller is equivalent to the original LTI output-feedback controller, and therefore inherits its stabilising properties.

\section{Baseline controller transformations}

\subsection{Discretisation}

Real-world systems operate in continuous time, and an existing controller might be specified in continuous time, but practical implementations of MPC operate in discrete-time with sampled data.  In \cite{Mac2007} it was proposed to find a continuous-time observer-compensator based realisation of an original continuous-time controller, to implement the observer in continuous time and then sample the output.  If the gain used to form stage cost (\ref{eqn:stagecost}) is the same as the continuous time state feedback gain, the closed loop system behaviour might be very different to that when using the original controller.
Better output-performance matching can be achieved by finding an ``equivalent'' discrete-time cost weighting matrix \citep{Van1978} that minimises an integral cost function whilst being formulated as a discrete time problem  as done in \citep{Mac2007}.  This however, changes the effective gain of the unconstrained controller such that its rows are no longer in the row space of $T$.  As a consequence, the $n - n_K$  modes corresponding to error dynamics in the nullspace of $T$ will affect even the unconstrained closed loop system.  This added complication constitutes a strong argument for discretising the baseline controller and the plant model first, and directly obtaining the discrete-time observer-based realisation.

The usual zero-order hold method best models how a simple MPC controller would drive a real plant and this should be used for discretising the plant model.  For the controller, it might be preferable to use a first-order hold or a Tustin transformation (e.g. \citep{FPW1990}), particularly at low sampling frequencies.  These can introduce non-zero $D_K$ terms even if none existed in the continuous time controller.  This process should, therefore, be performed before any further transformations are performed to comply with the restrictions of the observer-based realisations.

\subsection{Ensuring a strictly proper controller model}
\label{subsec:loopshift}
When a predictor structure is used for the observer, $D_K$ must be zero.  Two options are available when $D_K$ is non-zero.

\subsubsection{Loop-shifting}
Loop-shifting \citep{ZDG1996} can be used (Figure~\ref{fig:loopshift}a), leading to the following modified plant and controller:
\begin{subequations}
\begin{equation}
\tilde{G}(z) = 
\left[\begin{array}{c|c}
A+BD_{K}C & B\\
\hline
C & 0
\end{array}\right]
\end{equation}
\begin{equation}
\tilde{K}_0(z) =
\left[
\begin{array}{c|c}
A_K & B_K\\
\hline
C_K & 0
\end{array}
\right].
\end{equation}
\end{subequations}
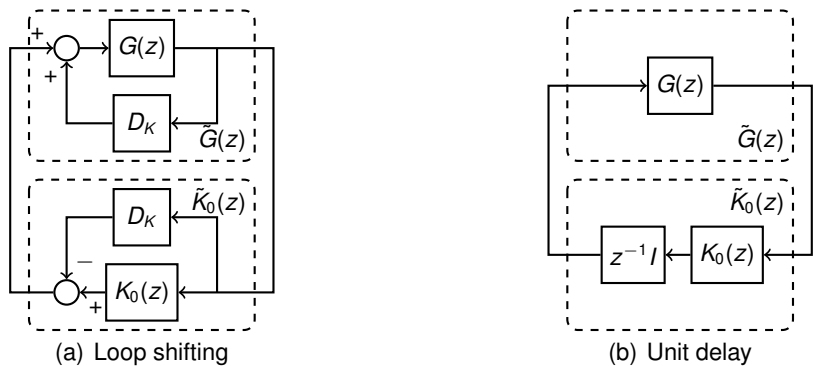
\begin{figure}[htb]
\centering
\footnotesize
\hfill
\subfigure[Loop shifting]{
\begin{tikzpicture}
\tikzstyle{bdblock} = [draw, minimum height=0.75cm, minimum width=0.75cm,  thick];

\tikzstyle{sumblock} = [draw, minimum height=0.325cm, minimum width=0.325cm,  thick, circle];

\tikzstyle{bdarrow} = [draw,  thick, ->];

\node[bdblock] (GZ) at (0,0) {$G(z)$};
\node[bdblock] (KZ) at (0,-3.25) {$K_0(z)$};

\node[bdblock] (DK1) at (0,-1) {$D_K$};
\node[bdblock] (DK2) at (0,-2.25) {$D_K$};

\node[sumblock] (S1) at (-1,0){};
\node[sumblock] (S2) at (-1,-3.25){};

\draw[bdarrow] (DK1) -| node[pos=1, below left]{$+$} (S1);
\draw[bdarrow] (S1) -- (GZ);

\path(KZ) -- +(1,0) coordinate (y2);
\draw[bdarrow] (GZ) -- ++(1,0) coordinate (y1) -- ++(0.75,0) |- (y2) -- (KZ);
\draw[bdarrow] (y1) |- (DK1);

\draw[bdarrow] (DK2) -| node[pos=1, above right]{$-$} (S2);
\draw[bdarrow] (KZ) -- node[pos=1, below right]{$+$} (S2);

\draw[bdarrow] (S2) -- ++(-0.75,0) |- node[pos=1, above left]{$+$} (S1);

\draw[bdarrow] (y2) |- (DK2);

\begin{pgfonlayer}{background}
\draw[thick, dashed, rounded corners](-1.5,0.5) rectangle (1.5,-1.5);
\draw[thick, dashed, rounded corners](-1.5,-1.75) rectangle (1.5,-3.75);

\node[above left] at (1.5,-1.5){$\tilde{G}(z)$};
\node[below left] at (1.5,-1.75){$\tilde{K}_0(z)$};
\end{pgfonlayer}
\end{tikzpicture}
}
\hfill
\subfigure[Unit delay]{
\begin{tikzpicture}
\tikzstyle{bdblock} = [draw, minimum height=0.75cm, minimum width=0.75cm,  thick];

\tikzstyle{sumblock} = [draw, minimum height=0.325cm, minimum width=0.325cm,  thick, circle];

\tikzstyle{bdarrow} = [draw,  thick, ->];

\node[bdblock] (GZ) at (0,-0.5) {$G(z)$};
\node[bdblock] (KZ) at (0.625,-2.75) {$K_0(z)$};
\node[bdblock] (FILT) at (-0.625,-2.75) {$z^{-1}I$};




\path(KZ) -- +(1,0) coordinate (y2);
\draw[bdarrow] (GZ) -- ++(1,0) coordinate (y1) -- ++(0.75,0) |- (y2) -- (KZ);



\draw[bdarrow] (KZ) -- (FILT);
\draw[bdarrow] (FILT) -- ++(-1.125,0) |- (GZ);


\begin{pgfonlayer}{background}
\draw[thick, dashed, rounded corners](-1.5,0.5) rectangle (1.5,-1.5);
\draw[thick, dashed, rounded corners](-1.5,-1.75) rectangle (1.5,-3.75);

\node[above left] at (1.5,-1.5){$\tilde{G}(z)$};
\node[below left] at (1.5,-1.75){$\tilde{K}_0(z)$};
\end{pgfonlayer}
\end{tikzpicture}
}\hfill\hfill
\caption{Techniques to ensure a strictly proper $\tilde{K}_0(z)$}
\label{fig:loopshift}
\end{figure}
However, the direct feedthrough component incorporated into the plant model uses the measured output, whilst an MPC prediction would use the observer output (the output values cannot be extrapolated over the prediction horizon without the estimates of unmeasured states).  Input constraints might therefore be violated, or, control can be overly conservative when the observer error $y(k) - C\hat{x}(k|k-1)$ is large in magnitude.

\subsubsection{Unit delay or low-pass filter}
Alternatively, adding a unit delay or a low pass filter in series with the original controller (Figure~\ref{fig:loopshift}b) prior to obtaining the observer-based realisation would have the desired effect.  By avoiding a direct feedthrough, the MPC controller directly manipulates the plant input $u(k)$ rather than an estimate of the input, avoiding uncertainty of the ``real'' value of input $u(k)$.  In this case, the conventional controller $K_0(z)$ must be sufficiently robust, or the sampling frequency must be high enough for the delay to be tolerated.

\subsection{Ensuring correct zeros in controller}
When using a filter structure, $K_0(0) = 0$ is required for correct reproduction.  If this is not initially the case, the required zeros can be artificially introduced by adding a dipole on each channel of the form
\begin{equation}
\frac{Wz}{Wz-1}
\end{equation}
where $W$ is a ``large'' number, into the open-loop controller model.  This introduces the required zeros whilst at the same time has minimal effect on open loop gains and phase shifts of the unconstrained controller (Figure~\ref{fig:dipole}).

\begin{figure}[h]
\centering
\includegraphics[width=0.7\textwidth]{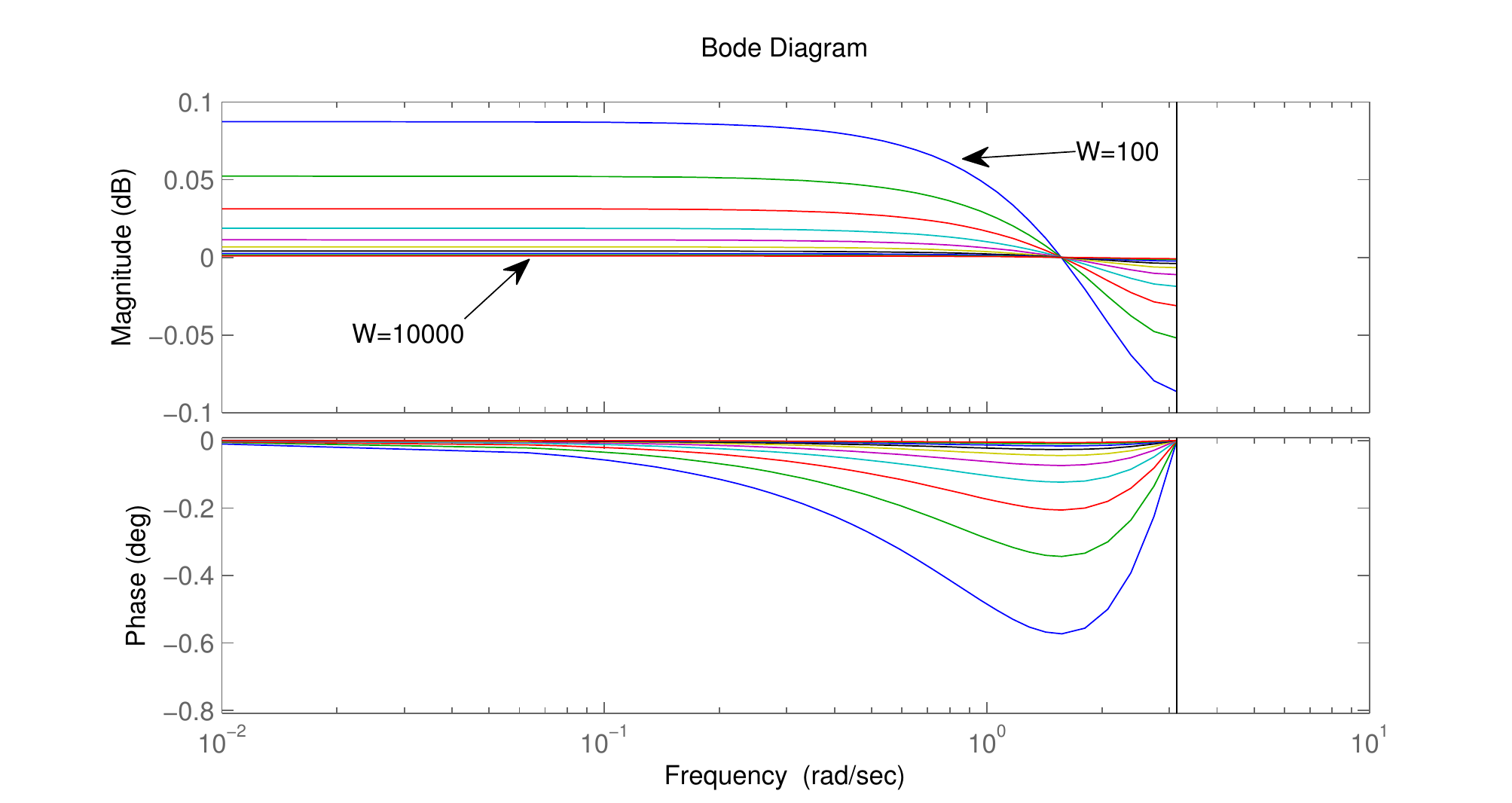}
\caption{Bode plot of dipole gain-phase properties ($T_s=\unit{1}{\second}$)}
\label{fig:dipole}
\end{figure}

\subsection{Design guidelines}

The choice as to whether the predictor and filter form is most suitable depends upon whether the original controller is strictly proper or not, and if it is not, how large the value of $D_K$ is, the length of the sampling period and the computational budget.  The type of disturbances expected should also be considered.  Loop-shifting might be considered appropriate if sensor noise dominates the model uncertainty.  On the other hand if large external disturbances act on the plant, the error between the observer estimate of the output and the actual output could make enforcement of input constraints rather difficult, making the filter structure a rather more attractive prospect.

\section{Plant model transformations}

\subsection{Integral action}
Direct reproduction of a controller including integral action for offset-free control through the reverse engineering procedure would, by construction, reproduce the input/output characteristics.  However, the presence of any unmodelled disturbance would manifest itself as a bias on each state estimate --- problematic for a constrained predictive controller, as poor predictions made from biased state estimates will lead to overly conservative control action, or in the worst cases, control action that leads to infeasibility.  The usual MPC methods of augmenting the plant model with a disturbance model \citep{MB2002,Pan2004,PB2007} can be used, subject to the new, augmented plant model being observable.
Disturbance models also provide a convenient way in which the order of the plant model can be increased when of lower order than the original controller.

Whilst, in \citep{AA1999} a method is proposed for finding an observer-compensator-Youla Parameter realisation of a controller with order higher than that of the plant model, the inclusion of disturbance models will improve the quality of the state estimates, and therefore the quality of the predictions in the optimisation.  It is noted that there has been recent interest in using a Youla Parameter as a means of improving the robustness in constrained MPC \citep{CKCR2009,TNP2010}, however the applicability of these methods to the reverse engineering procedure remains an open topic for investigation.

\section{Selection of $T$ and $T^{\dagger}$}
The allocation of the closed loop poles between the observer error and the (unconstrained) state feedback dynamics is an important design decision when implementing an MPC controller in this manner.  This turns out to be even more important when there is plant-model mismatch as a result of modelling error, or of linearisation error stemming from the common practice of using a local linearised model of a non-linear plant.

\subsection{Solution $T$}
There can be a marked difference in the observer error dynamics of different realisations, despite the complete controller being identical to the original $K_0(z)$.  As previously stated, in the presence of constraints, this error can result in a violation of constraints, or overly conservative control (depending on the sign of the error),  because $D_k y$ is directly fed back to the input of the plant, bypassing the MPC controller, whilst the MPC controller has to enforce constraints using an estimate, $C\hat{x}$ from the observer.

These types of error are particularly marked when using loop-shifting to make $\tilde{K}_0(z)$ strictly proper.
For useful predictions of the state trajectory to the obtained, the quality of the state estimation must be sufficiently high.  Whilst intuition would suggest (subject to existence of a valid solution of $T$) that keeping the fastest closed-loop poles in the observer error dynamics might be sensible, our third example (Section~\ref{sec:b747}) demonstrates that this is not necessarily the case for highly-coupled MIMO plants, particularly when using a highly-coupled MIMO plant model obtained by the widespread method of locally linearising a nonlinear plant.

For SISO systems, and small MIMO systems, it is practical to evaluate every feasible solution of $T$ and analyse the observer performance ``by eye''.  However (assuming full observability and controllability of $G_0$ and $K_0$, no repeated poles (in which case, special care must be taken as to how the subspaces are partitioned, or all repeated poles must remain ``together''), and no conjugate pole pairs), there are up to ${}^{(n+n_K)}C_{n_K}$ possible solutions for $T$.  The combinatorial growth of possibilities with the system size therefore motivates the suggestion of a fast-to-calculate quantitative metric to rapidly assess the quality of the observer.

\begin{prop}
\label{prop:Ginf}
A good observer-gain realisation to use as the basis for an MPC controller is that which minimises $\| G_{y\rightarrow \hat{e}}(z) \|_{2} \times \| G_{d \rightarrow \hat{x}}(z)\|_{2}$
where, depending on the observer realisation
\begin{equation}
G_{y\rightarrow \hat{e}}(z)
 =
\underbrace{
\left[
\begin{array}{c|c}
\tilde{A}-K_f\tilde{C} & K_f\\
\hline
C & -I
\end{array}
\right]}_{\text{Predictor form}}
\text{ or }
\underbrace{\left[
\begin{array}{c|c}
A(I-K_fC) & AK_f\\
\hline
C(I-K_fC) & CK_f-I
\end{array}
\right]}_{\text{Filter form}}
\end{equation}
and
\begin{equation}
G_{d\rightarrow\hat{x}}(z) =
\left[
\begin{array}{cc|c}
\multicolumn{2}{c|}{\multirow{2}{1.3cm}{$\tilde{A} - K_f\tilde{C}$}} & 0\\
& & I\\
\hline
I & 0 & 0\\
0 & I & -I\\
\end{array}
\right]%
\text{ or }
\left[
\begin{array}{cc|c}
\multicolumn{2}{c|}{\multirow{2}{1.75cm}{$A(I-K_fC)$}} & 0\\
& & I\\
\hline
I & 0 & 0\\
0 & I & -I\\
\end{array}
\right].
\end{equation}
\end{prop}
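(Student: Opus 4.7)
The plan is to split the argument into a verification part (the two state-space realisations are correct) and a justification part (why the product is a sensible ranking metric), since this proposition is essentially a design heuristic rather than a rigorous optimality statement. No formal optimum is being claimed; what must be established is that the two transfer functions are well-defined interpretations of ``how the observer reacts'' to the two most relevant exogenous signals in an MPC context.

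First, I would derive $G_{y\to\hat{e}}(z)$ directly from the observer update equations. For the predictor form, take the observer dynamics $\hat{x}(k+1|k) = (\tilde{A}-K_f\tilde{C})\hat{x}(k|k-1) + \tilde{B}u(k) + K_f y(k)$ and define the output-prediction residual $\hat{e}(k) = \tilde{C}\hat{x}(k|k-1) - y(k)$. Since $u$ is deterministic it contributes nothing to the transfer function from $y$, giving exactly the realisation in the statement. The filter-form case is analogous: substitute $\hat{x}(k|k) = (I-K_fC)\hat{x}(k|k-1)+K_f y(k)$ into the one-step propagation $\hat{x}(k+1|k)=A\hat{x}(k|k)+Bu(k)$ and choose the residual $C\hat{x}(k|k)-y(k)$, which produces the claimed $A(I-K_fC)$, $AK_f$, $C(I-K_fC)$, $CK_f-I$ matrices. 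For $G_{d\to\hat{x}}(z)$, a disturbance $d$ is injected directly on the state, the observer is driven by the resulting $y$, and the ``output'' is taken as the block-vector comparing the true state with the estimate; the block structure in the statement is just the corresponding augmented realisation with one copy of the observer $\tilde{A}-K_f\tilde{C}$ (respectively $A(I-K_fC)$) and identity selector rows.

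Next, I would motivate why the two factors together constitute a reasonable quality measure. The first factor $\|G_{y\to\hat{e}}\|_2$ quantifies how much the measurement signal is mis-tracked by the observer's internal model, which is exactly the quantity that bypasses the MPC optimisation through the direct feedthrough $D_K y$ in the filter form (or equivalently degrades the prediction of constrained variables in the predictor form). The second factor $\|G_{d\to\hat{x}}\|_2$ captures how unmodelled input disturbances contaminate the state estimate, which governs the fidelity of the $N$-step open-loop prediction used to enforce state and output constraints. Minimising either quantity individually is insufficient because a realisation can have a small residual but highly disturbance-sensitive estimate, or vice versa; taking the product penalises both failure modes simultaneously and, because it is multiplicative, it penalises realisations in which one factor blows up even when the other is small.

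The main obstacle is really a framing one rather than a computational one: there is no rigorous guarantee that minimising $\|G_{y\to\hat{e}}\|_{2}\|G_{d\to\hat{x}}\|_{2}$ yields an optimal constrained closed-loop response (indeed, as noted earlier in the paper, on highly coupled MIMO plants even the heuristic of ``put the fastest poles in the observer'' can fail). The ``proof'' must therefore confine itself to (i) verifying the two state-space objects, and (ii) arguing that they cover the two independent ways in which the observer-based realisation can degrade MPC constraint handling, so that their product is a defensible, cheap-to-evaluate surrogate when exhaustive search over the $\binom{n+n_K}{n_K}$ solutions of $T$ is required.
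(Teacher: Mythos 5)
Your reading of the situation is right: the paper offers no formal proof of Proposition~\ref{prop:Ginf} at all --- it is a design heuristic, and the only justification given is the remark that immediately follows it. Your proposal therefore does somewhat more than the paper: the derivation of the two state-space realisations from the observer update equations (dropping the deterministic $u$-channel, forming the residual $C\hat{x}-y$, and augmenting with the state-injected disturbance) is correct and is simply asserted rather than derived in the paper. Your justification of the two factors also largely mirrors the paper's remark ($G_{y\rightarrow\hat{e}}$ captures measurement-noise corruption of the estimate that matters for constraint enforcement, $G_{d\rightarrow\hat{x}}$ captures disturbance contamination of the predictions), and you correctly flag that no optimality of the constrained closed loop is being claimed. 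The one point where you genuinely diverge from the paper, and where your argument is the weaker of the two, is the rationale for multiplying rather than adding: you claim the product ``penalises realisations in which one factor blows up even when the other is small'', but that is precisely what a product does \emph{not} guarantee --- a tiny first factor can mask an arbitrarily large second one. The paper's stated reason is different and sounder for this purpose: since no assumption can be made about the \emph{relative magnitudes} (effectively the units and scaling) of the two norms, a sum would be dominated by whichever happens to be numerically larger, whereas the product ranks realisations in a way that is invariant to an overall rescaling of either factor across the candidate set. You also omit the paper's two secondary design points --- that unmeasured states are deliberately excluded from $G_{y\rightarrow\hat{e}}$ because nothing can be assumed about their true values, and that the $\mathcal{H}_2$ norm is preferred to $\mathcal{H}_{\infty}$ because a worst-case gain would hide the rest of the frequency response when comparing a discrete set of candidates --- but these are refinements of the heuristic rather than gaps in a proof.
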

\begin{rmk}
The term $G_{y \rightarrow \hat{e}}$ is concerned with the effect of measurement noise on the filtered estimate of the measured outputs.  Unmeasured states are deliberately omitted in this metric, as no assumption can be made regarding the actual values of the unmeasured states, and therefore on the state error.  The term $G_{d \rightarrow \hat{x}}$ considers the effect of unmeasured (but acknowledged, if not modelled in detail) disturbances on the estimated state.  Ideally this value should be small.  The two terms are multiplied rather than added, because no assumption can be made regarding their relative magnitudes.  The $\mathcal{H}_2$ norm is chosen over the $\mathcal{H}_{\infty}$ norm because the search is being performed over a discrete set, and the latter metric represents a ``worst case'' gain, effectively ``hiding'' any other behaviour.
\end{rmk}
\begin{rmk}
The choice of $T^{\dagger}$ will affect the chosen metric.  Therefore, a simple heuristic for choosing this should be decided before performing the search over all feasible combinations $T$.  This is provided subsequently.
\end{rmk}

\subsection{Designing $T^{\dagger}$}
The extra dynamics introduced as a consequence of the observer being of higher order than the original controller affect the MPC controller performance, despite their associated modes being in the nullspace of the initial calculated $K_c$ and thus ``invisible'' to the plant (in which case ensuring $A_{E}$ is stable is sufficient \citep{DAC2006}).  The solution to a linearly constrained MPC problem with linear or quadratic cost is piecewise affine with respect to the current state \citep{BMDP2002}.  When constraints are active the gain component of this function changes, and the observer error modes which were previously invisible in the closed loop system will stop being insignificant.

The examples in Section~\ref{sec:casestudy} indicate that placing these ``free poles'' using Kalman filter methods on system (\ref{eqn:freepolesystem}) rather than attempting to place them near the origin as might be expected to give ``fastest'' convergence is worthwhile to avoid amplification of noise.  As one would reasonably expect, the choice of weightings ultimately depends on the disturbances that are expected, but a relatively high measurement noise covariance matrix (giving slower observer poles) appears to give the best results.

\section{Reference tracking}
\label{sec:reftrack}
\begin{figure*}[htbp]
\centering
\scriptsize
\hfill
\subfigure[Error observer]{
\begin{tikzpicture}
\tikzstyle{bdblock} = [draw,  thick, minimum height=0.7cm, minimum width=0.7cm];
\tikzstyle{sumblock} = [draw, circle,  thick, minimum height=0.35cm, minimum width=0.35cm];
\tikzstyle{bdarrow} = [draw, ->,  thick];
\node[bdblock] (GZ1) at (0,0) {$G(z)$};
\coordinate (u1) at (-0.625,0);
\node[bdblock] (MPC1) at (-1.25,0) {MPC};
\node[bdblock] (OBS1) at (-2.5,0) {Obs};
\node[sumblock] (S1) at (-3.5,0){};

\path(S1) -- +(-0.75,0) coordinate(r1);
\draw[bdarrow](r1) -- (S1);
\draw[bdarrow](S1) -- (OBS1);
\draw[bdarrow](OBS1) -- (MPC1);
\draw[bdarrow](MPC1) -- (GZ1);

\draw[bdarrow] (u1) -- ++(0,-0.75) -| (OBS1);
\draw[bdarrow] (GZ1) -- ++(0.75,0) -- ++(0,-1.25) -| (S1);
\end{tikzpicture}
}
\hfill
\subfigure[Option 2]{
\begin{tikzpicture}
\tikzstyle{bdblock} = [draw,  thick, minimum height=0.7cm, minimum width=0.7cm];
\tikzstyle{sumblock} = [draw, circle,  thick, minimum height=0.35cm, minimum width=0.35cm];
\tikzstyle{bdarrow} = [draw, ->,  thick];
\node[bdblock] (GZ2) at (0,0) {$G(z)$};
\node[bdblock] (OBS2) at (-0.75,-1) {Obs};
\node[bdblock] (MPC2) at (-1.5,0) {MPC};
\draw[bdarrow] (GZ2) -- ++(0.75,0) |- (OBS2);

\path (MPC2.west) -- ++(0,0.125) coordinate(pr) -- ++(-0.5,0) coordinate(pri);
\path (MPC2.west) -- ++(0,-0.125) coordinate(px) -- ++(-0.5,0) coordinate(pxi);

\draw[bdarrow] (OBS2) -| (pxi) -- (px);
\draw[bdarrow] (pri) -- (pr);
\draw[bdarrow] (MPC2) -- (GZ2);

\coordinate (u1) at ($(MPC2)!0.5!(GZ2)$);
\draw[bdarrow] (u1) -- (OBS2);

\end{tikzpicture}
}
\hfill
\subfigure[Option 3]{
\begin{tikzpicture}
\tikzstyle{bdblock} = [draw,  thick, minimum height=0.7cm, minimum width=0.7cm];
\tikzstyle{sumblock} = [draw, circle,  thick, minimum height=0.35cm, minimum width=0.35cm];
\tikzstyle{bdarrow} = [draw, ->,  thick];
\node[bdblock] (GZ2) at (0,0) {$G(z)$};
\node[bdblock] (OBS2) at (-0.75,-1) {Obs};
\node[bdblock] (MPC2) at (-1.5,0) {MPC};
\node[bdblock] (KPRE) at (-2.75,0.125) {$H_{\mathrm{pre}}(z)$};
\draw[bdarrow] (GZ2) -- ++(0.5,0) |- (OBS2);

\path (KPRE.west) -- ++(0,0) coordinate(kpr) -- ++(-0.5,0) coordinate(kpri);

\path (MPC2.west) -- ++(0,0.125) coordinate(pr) -- ++(-0.5,0) coordinate(pri);
\path (MPC2.west) -- ++(0,-0.125) coordinate(px) -- ++(-0.25,0) coordinate(pxi);

\draw[bdarrow] (OBS2) -| (pxi) -- (px);

\draw[bdarrow] (kpri) -- (kpr);

\draw[bdarrow] (KPRE) -- (pr);

\draw[bdarrow] (MPC2) -- (GZ2);

\coordinate (u1) at ($(MPC2)!0.5!(GZ2)$);
\draw[bdarrow] (u1) -- (OBS2);

\end{tikzpicture}
}
\hfill\hfill
\caption{Reference tracking}
\label{fig:reftrack}
\end{figure*}
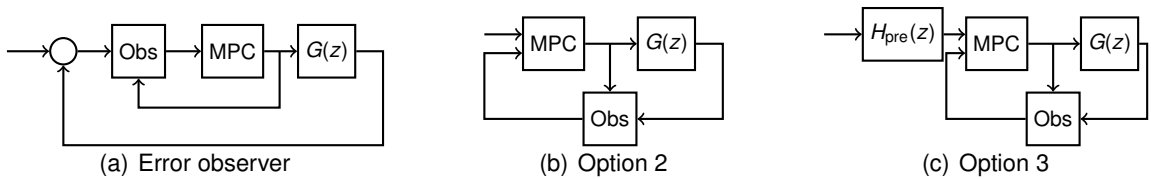
An LTI compensator is often placed in the forward path of a feedback loop rather than the return path.  The dynamics of the compensator therefore act upon the difference between the reference signal and the output signal rather than just the output signal.  However, to obtain an estimate of the plant state an observer would be placed in the return path.

Three options are suggested here.  The first is to implement the observer in the forward path, and use the observer to estimate the error between the plant state and a reference state.  Because the prediction model in the MPC controller would therefore predict the trajectory of the tracking error rather than the plant state, this is sufficient for handling input constraints, but not output or state constraints.

The second option is to simply implement the observer in the return path and accept a change in the transient response to input changes (the disturbance rejection properties will remain unchanged).

A third solution is to implement the observer in the return path, and to pre-filter the output reference set-point with a modified copy of the observer.  For the unconstrained case with unmodified controller gain $K_c$, an adequate pre-filter for the predictor form (assuming loop-shifting has not been used) is:
\begin{equation}
H_{\mathrm{pre}}(z)
=
\left[
\begin{array}{c|c}
\tilde{A}-K_f\tilde{C} & K_f\\
\hline
I & 0
\end{array}
\right].
\label{eqn:hpre}
\end{equation}
Loop-shifting must also be reflected in the reference-tracking structure (Figure~\ref{fig:lsreftrack}). The signal $D_Kr$ must be known to the MPC controller to make correct predictions.
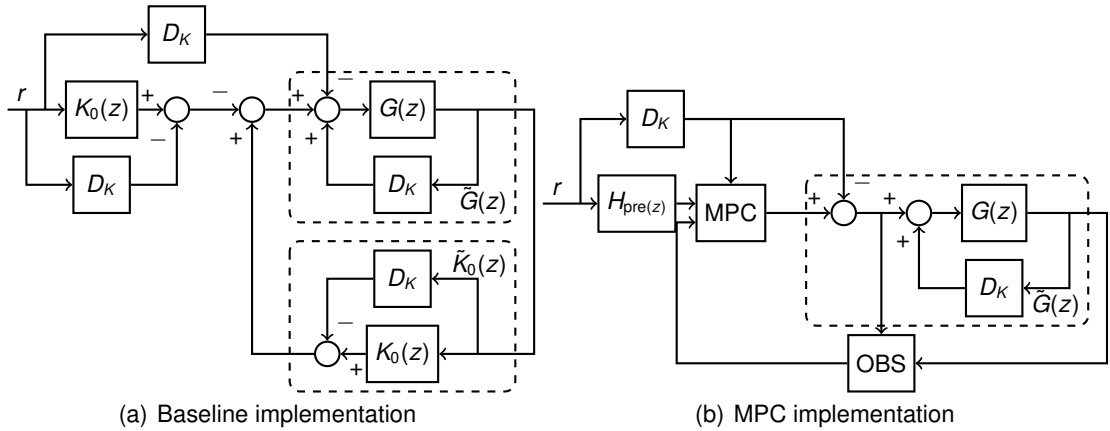
\begin{figure}[htbp]
\centering
\footnotesize
\subfigure[Baseline implementation]{
\begin{tikzpicture}
\tikzstyle{bdblock} = [draw, minimum height=0.75cm, minimum width=0.75cm,  thick];

\tikzstyle{sumblock} = [draw, minimum height=0.325cm, minimum width=0.325cm,  thick, circle];

\tikzstyle{bdarrow} = [draw,  thick, ->];

\node[bdblock] (GZ) at (0,0) {$G(z)$};
\node[bdblock] (KZ) at (0,-3.25) {$K_0(z)$};

\node[bdblock] (DK1) at (0,-1) {$D_K$};
\node[bdblock] (DK2) at (0,-2.25) {$D_K$};

\node[sumblock] (S1) at (-1,0){};
\node[sumblock] (S2) at (-1,-3.25){};

\node[sumblock] (S3) at (-2,0){};
\node[sumblock] (S4) at (-3,0){};
\node[bdblock] (KZ2) at (-4,0){$K_0(z)$};
\node[bdblock] (DK3) at (-4,-1){$D_K$};
\node[bdblock] (DK4) at (-3,1){$D_K$};

\coordinate (r1) at (-5.25,0);

\node[above right] at (r1) {$r$};
\draw[bdarrow] (r1) -- ++(0.25,0) coordinate(r2) --  ++(0.25,0) coordinate (r3) -- (KZ2);
\draw[bdarrow] (r2) |- (DK3);
\draw[bdarrow] (r3) |- (DK4);
\draw[bdarrow] (DK4) -| node[above right,pos=1]{$-$} (S1);

\draw[bdarrow] (DK1) -| node[pos=1, below left]{$+$} (S1);
\draw[bdarrow] (S1) -- (GZ);

\path(KZ) -- +(1,0) coordinate (y2);
\draw[bdarrow] (GZ) -- ++(1,0) coordinate (y1) -- ++(0.75,0) |- (y2) -- (KZ);
\draw[bdarrow] (y1) |- (DK1);

\draw[bdarrow] (DK2) -| node[pos=1, above right]{$-$} (S2);
\draw[bdarrow] (KZ) -- node[pos=1, below right]{$+$} (S2);

\draw[bdarrow] (S2) -- ++(-0.75,0) -| node[pos=1, below left]{$+$} (S3);
\draw[bdarrow] (S3) -- node[pos=1, above left]{$+$} (S1);

\draw[bdarrow] (KZ2) -- node[pos=1,above left]{$+$} (S4);
\draw[bdarrow] (DK3) -| node[pos=1,below left]{$-$} (S4);
\draw[bdarrow] (S4) -- node[pos=1, above left]{$-$} (S3);

\draw[bdarrow] (y2) |- (DK2);

\begin{pgfonlayer}{background}
\draw[thick, dashed, rounded corners](-1.5,0.5) rectangle (1.5,-1.5);
\draw[thick, dashed, rounded corners](-1.5,-1.75) rectangle (1.5,-3.75);

\node[above left] at (1.5,-1.5){$\tilde{G}(z)$};
\node[below left] at (1.5,-1.75){$\tilde{K}_0(z)$};
\end{pgfonlayer}
\end{tikzpicture}}
\subfigure[MPC implementation]{\begin{tikzpicture}

\tikzstyle{bdblock} = [draw, minimum height=0.75cm, minimum width=0.75cm,  thick];

\tikzstyle{sumblock} = [draw, minimum height=0.325cm, minimum width=0.325cm,  thick, circle];

\tikzstyle{bdarrow} = [draw,  thick, ->];

\node[bdblock] (GZ) at (0,0){$G(z)$};
\node[bdblock] (DK1) at (0,-1){$D_K$};
\node[sumblock] (S1) at (-1,0) {};
\node[sumblock] (S2) at (-2,0) {};
\node[bdblock] (DK2) at (-4.5,1.25){$D_K$};
\node[bdblock] (MPC) at (-3.5,0){MPC};
\node[bdblock] (HPRE) at (-4.75,0.125) {$H_{\mathrm{pre}(z)}$};
\node[bdblock] (OBS) at (-1.5,-2) {OBS};
\coordinate (r1) at (-6,0.125);

\draw[bdarrow] (DK1) -| node[pos=1, below left]{$+$}(S1);
\draw[bdarrow] (DK2) -| (MPC);
\draw[bdarrow] (S1) -- (GZ);
\draw[bdarrow] (S2) -- coordinate[pos=0.5](u1) node[pos=1,above left]{$+$} (S1);
\draw[bdarrow] (GZ) -- ++(1,0) coordinate (y1) -- ++(0.5,0) |- (OBS);
\draw[bdarrow] (y1) |- (DK1);
\draw[bdarrow] (DK2) -| node[pos=1, above right]{$-$} (S2);

\draw[bdarrow] (r1) -- node[pos=0,above right]{$r$} ++(0.5,0) coordinate (r2) -- (HPRE);
\draw[bdarrow] (r2) |- (DK2);
\draw[bdarrow](HPRE) -- (HPRE-|MPC.west);
\draw[bdarrow](MPC) -- node[pos=1, above left]{$+$} (S2);

\draw[bdarrow](u1) -- (OBS);

\path(MPC.west) -- ++(0,-0.125) coordinate (xhat) -- ++(-0.25,0) coordinate (xhatin);
\draw[bdarrow] (OBS) -| (xhatin) -- (xhat);

\begin{pgfonlayer}{background}
\draw[thick, dashed, rounded corners](-2.5,0.5) rectangle (1.25,-1.5);
\node[above left] at (1.25,-1.5){$\tilde{G}(z)$};

\end{pgfonlayer}

\end{tikzpicture}%
}

\caption{Loop-shifting with reference tracking}
\label{fig:lsreftrack}
\end{figure}
The prefilter with loop-shifting used should be:
\begin{equation}
H_{\mathrm{pre}}(z) =
\left[
\begin{array}{c|c}
A+(BD_K-K_f)C & K_f-BD_K\\
\hline
I & 0
\end{array}
\right].
\label{eqn:prefilternominal1}
\end{equation}

However, these pre-filters are not unique, because any signal in the null-space of $K_c$ can be added to the output of this system yet give identical (unconstrained) closed-loop results.  These degrees of freedom can therefore be used to force certain elements of the state reference $x_{r} = H_{\mathrm{pre}}(z) r$ to be equal to elements of the original reference signal, or to force certain elements of the state reference to always be zero.  The latter is useful if one desires that the reference setpoint not include any open-loop unstable directions.  Letting $r(k)$ be the original reference signal, and $x_{\mathrm{pre}}(k)$ be the state of system (\ref{eqn:hpre}) then the ``$C$'' and ``$D$'' matrices of (\ref{eqn:hpre}) can be chosen so that the state reference signal $x_r(k)$ satisfies (for some $L_1 \in \mathbb{R}^{(n-n_u)\times n}$, and some $L_2 \in \mathbb{R}^{(n-n_u)\times n_r}$):
\begin{equation}
\begin{bmatrix}
L_1\\
K_c
\end{bmatrix}
x_{\mathrm{r}}(k)
=
\begin{bmatrix}
0\\
K_c
\end{bmatrix}
x_{\mathrm{pre}}(k)
+
\begin{bmatrix}
L_2 \\
0
\end{bmatrix}
r(k).
\end{equation}
Therefore, assuming that $K_c$ is of full row rank, an equally valid choice of prefilter is:
\begin{equation}
H_{\mathrm{pre}}(z) =
\left[
\begin{array}{c|c}
\tilde{A}-K_f\tilde{C} & K_f\\
\hline
\begin{bmatrix}
L_1\\K_c
\end{bmatrix}^{-1}
\begin{bmatrix}
0\\K_c
\end{bmatrix}
&
\begin{bmatrix}
L_1\\K_c
\end{bmatrix}^{-1}
\begin{bmatrix}
L_2\\0
\end{bmatrix}
\end{array}
\right].
\end{equation}
Alternatively, if loop-shifting has been used,
\begin{equation}
H_{\mathrm{pre}}(z) =
\left[
\begin{array}{c|c}
A+BD_KC-K_fC & K_f-BD_K\\
\hline
\begin{bmatrix}
L_1\\
K_c
\end{bmatrix}^{-1}
\begin{bmatrix}
0\\
K_c
\end{bmatrix}
&
\begin{bmatrix}
L_1\\
K_c
\end{bmatrix}^{-1}
\begin{bmatrix}
L_2\\
0
\end{bmatrix}
\end{array}
\right].
\label{eqn:bdkprefilter}
\end{equation}

The same principle could also be applied to a prefilter implemented to provide $x_{r}(k+1|k)$ at time $k$.

\section{System realisation using standard tools}

\subsection{Pre-stabilisation}
\label{sec:prestab}
The stage cost function (\ref{eqn:stagecost}) is unusual in that it includes cross terms between the predicted state and the predicted input at each time step.  Whilst QP matrices for a finite-horizon control problem can easily be constructed manually, this structure is not always directly supported by standard MPC design and implementation software toolchains.  However, prestabilisation \citep{RKR1998} can be used.
Letting $u(k) = K_c\hat{x}(k) + \eta(k)$,
a change of coordinates transforms the stage cost (\ref{eqn:stagecost}) into
\begin{align}
\ell(x,\eta) & = 
\begin{bmatrix}
x^T & \eta^T
\end{bmatrix}
\begin{bmatrix}
0 & 0\\
0 & R
\end{bmatrix}
\begin{bmatrix}
x\\
\eta
\end{bmatrix}
\label{eqn:prestabcost}
\end{align}
and the prediction model
\begin{equation}
x(k+1) = (A+BK_c)x(k) + B\eta(k).
\end{equation}
Input constraints can then be imposed as cross-constraints between inputs and states --- i.e. as output constraints on an artificial plant model with a non-zero ``$D$'' matrix.

\subsection{Delay management}

Realising the discrete-time predictor structure in Simulink in a way that could be deployed is simple.  At time $k$, $x(k+1|k)$ should be used to calculate control action $u(k+1|k)$.  This will then be delayed by a period $T_s$ by a ``Rate Transition'' block configured for ``deterministic data transfer'' before being applied to a continuous-time plant.

From a practical perspective, to discrete-time realise a filter structure directly in Simulink requires that ``Rate Transitions''  between a continuous-time ``real world'' and the discrete-time controller are configured to not enforce deterministic data transfer, and that a transport delay is added in the ``cut'' shown in Figure~\ref{fig:delaycut}.  Unlike in an unconstrained ``observer-based'' controller, where $K_c$ is fixed and can be included directly in the observer dynamics, the control move $u$ from the MPC controller must be fed back to the observer \emph{after} calculation.

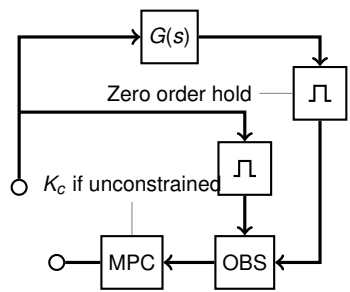
\begin{figure}[htbp]
\centering
\scriptsize
\begin{tikzpicture}
\tikzstyle{bdblock} = [  draw,  thick, minimum height=0.7cm, minimum width=0.7cm]
\tikzstyle{zohblock} = [  draw,  thick, minimum height=0.7cm, minimum width=0.7cm]
\tikzstyle{bdarrow} = [draw, very thick, ->]

\node[bdblock] at (0,0) (GS) {$G(s)$};

\node[zohblock, pin=left:{\scriptsize Zero order hold}] at (2,-0.75) (zoh) {};
\draw[thick] ($(zoh)+(-0.15,-0.1)$) -- ++(0.075,0) -- ++(0,0.2) -- ++(0.15,0) -- ++(0,-0.2) -- ++(0.075,0);

\node[bdblock] at (1,-3) (OBS) {OBS};
\node[bdblock, pin=above:{\scriptsize $K_c$ if unconstrained}] at (-0.5,-3) (MPC) {MPC};
\coordinate (est) at (-2,-1);

\node[zohblock] at (1,-1.75) (zoh2) {};
\draw[thick] ($(zoh2)+(-0.15,-0.1)$) -- ++(0.075,0) -- ++(0,0.2) -- ++(0.15,0) -- ++(0,-0.2) -- ++(0.075,0);

\draw[bdarrow] (est) |- (GS);
\draw[bdarrow] (GS) -| (zoh);
\draw[bdarrow] (zoh) |- (OBS);
\draw[bdarrow] (OBS) -- (MPC);

\draw[bdarrow] (est) -| (zoh2);
\draw[bdarrow] (zoh2) -- (OBS);

\node[draw,  thick, circle, inner sep=2pt] at ($(est)+(0,-1)$)  (cut1){};
\draw[very thick] (cut1) -- (est);

\node[draw,  thick, circle, inner sep=2pt] at ($(MPC)+(-1,0)$) (cut2){};
\draw[very thick] (MPC) -- (cut2);

\end{tikzpicture}
\caption{Delay (before observer)}
\label{fig:delaycut}
\end{figure}

Some delay to account for computation time and to avoid a computational algebraic loop is inevitable.  As well as not being a suitable configuration for controller deployment, this leads to a full unit delay on input signals fed back to the observer, because the zero-order hold sampling time will be ``missed''.  As an alternative, a delay of $T_s$ could be re-introduced everywhere, but if one has chosen to use the filter-form observer structure, it has likely already been established that this would be unacceptable.  However, to ensure deterministic data transfer whilst not requiring a full unit delay, multiple sampling rates and conditionally executed subsystems can be used (Algorithm~\ref{alg:multirate}).
\begin{algorithm}[H]
\caption{Multi-rate system for deterministic transfer}
\label{alg:multirate}
\KwData{$k$, $N_{\mathrm{div}}$}
\Begin{
\nl Let $t = kT_s$

\nl Sample $y(t)$

\nl Calculate $\hat{x}(k|k) = (I-K_fC)\hat{x}(k|k-1) + K_f y(t)$

\nl Start calculation of MPC control action

\nl $t \leftarrow kT_s + T_s/N_{\mathrm{div}}$

\nl Output MPC control action $u(t)$

\nl Use MPC control action $u(t)$ to calculate $\hat{x}(k+1|k) = A(I-K_fC)\hat{x}(k|k-1) + Bu(t) + K_fy(t-T_s/N_{\mathrm{div}})$

\nl Wait until $t = (k+1)T_s$.  Increment $k$.
}
\end{algorithm}

\FloatBarrier

\section{Cross coupling}
\label{sec:crosscoupling}
When using the reverse engineering method, under certain circumstances it is possible to inadvertently find an observer gain which introduces coupling between the state estimates of supposedly separate subsystems, and a state feedback gain that ``removes'' the cross-coupling.  Consider a closed loop system comprised of $m$ identical, parallel, decoupled loops with the plant and controller
\begin{equation}
G(z) =
\left[
\begin{array}{ccc|ccc}
A_{1} & & & B_{1} & & \\
& \ddots & & & \ddots & \\
& & A_{m} & & & B_{m}\\
\hline
C_{1} & & & 0 & & \\
& \ddots & & & \ddots & \\
& & C_{m} & & & 0\\
\end{array}
\right]
\end{equation}
\begin{equation}
K(z) =
\left[
\begin{array}{ccc|ccc}
A_{K1} & & & B_{K1} & & \\
& \ddots & & & \ddots & \\
& & A_{Km} & & & B_{Km}\\
\hline
C_{K1} & & & 0 & & \\
& \ddots & & & \ddots & \\
& & C_{Km} & & & 0\\
\end{array}
\right]
\end{equation}
respectively, where $A_1 = A_2 = \ldots = A_m$, $B_1 = B_2 = \ldots = B_m$ etc.
This is equivalent to $m$ identical, independent, closed loop systems.  The reverse engineering process aims to cast the controller into an observer form, with a state feedback matrix, $K_c$ and an observer gain matrix, $K_f$.  It would therefore not be unreasonable to expected that $K_c$ and $K_f$  to also be block diagonal:
%
\begin{align*}
K_c & = \begin{bmatrix}
K_{c1}\\
& \ddots\\
& & K_{cm}
\end{bmatrix}
&
K_f & = \begin{bmatrix}
K_{f1}\\
& \ddots\\
& & K_{fm}
\end{bmatrix}.
\end{align*}
Reverse engineering such a structure would, of course, be equivalent to reverse engineering each of the identical subsystems individually.  Unfortunately, it is not always the case that this structure is obtained, and whilst the nominal input-output characteristics remain decoupled, the reverse engineering procedure can introduce internal cross-coupling between the loops through a poor choice of $U = \begin{bmatrix}U_1^T & U_2^T\end{bmatrix}^T$ in (\ref{eqn:U1U2}).

\begin{thm}
The eigenvalues of matrix $A+BK_c$ correspond to eigenvectors defined by the columns of $U_1$, when the basis for the invariant subspace comprises a selection of eigenvectors of $A_{cl}$.
\end{thm}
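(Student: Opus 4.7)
The plan is to observe that the statement follows almost immediately from combining the displayed identity in the proof of Theorem~\ref{thm:clpoles} with the special structure imposed when the invariant subspace is spanned by actual eigenvectors (as opposed to a more general Schur basis).

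First, I would recall from the proof of Theorem~\ref{thm:clpoles} that, starting from the invariant-subspace relation~(\ref{eqn:u1eigenvector}) and reading off the top block row, one obtains
\begin{equation*}
A + BK_c \;=\; A + BD_KC + BC_KT \;=\; U_1 \Lambda U_1^{-1},
\end{equation*}
so that $(A+BK_c)U_1 = U_1 \Lambda$. This identity is what ties the state-feedback closed-loop matrix to the right-hand factor of the invariant subspace representation.

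Next I would specialise to the hypothesis of the present theorem: the basis of the invariant subspace $\mathcal{S}$ consists of a selection of eigenvectors of $A_{\mathrm{cl}}$. In that case the restriction of $A_{\mathrm{cl}}$ to $\mathcal{S}$, expressed in this basis, is the diagonal matrix of the corresponding eigenvalues, i.e.\ $\Lambda$ is diagonal with entries $\lambda_1,\dots,\lambda_n$. Writing $U_1 = [u_1 \; \cdots \; u_n]$ and equating columns in $(A+BK_c)U_1 = U_1 \Lambda$ then gives
\begin{equation*}
(A+BK_c)\,u_i \;=\; \lambda_i\, u_i, \qquad i=1,\dots,n,
\end{equation*}
which is precisely the claim that each $u_i$ is an eigenvector of $A+BK_c$ with eigenvalue $\lambda_i$. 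Invertibility of $U_1$ (required already for $T = U_2 U_1^{-1}$ to be defined) guarantees that these $n$ eigenvectors are linearly independent, so no generalised-eigenvector complications arise.

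There is no real obstacle beyond bookkeeping; the only subtlety worth flagging, consistent with the remark following Theorem~\ref{thm:clpoles}, is that complex conjugate eigenvalue pairs and repeated eigenvalues of $A_{\mathrm{cl}}$ must be handled together so that the chosen invariant subspace genuinely admits an eigenbasis and $U_1$ is invertible over $\mathbb{R}$ (or $\mathbb{C}$, after which one assembles real solutions in the usual way). Once that is assumed, the argument is the one-line column-wise reading of $(A+BK_c)U_1 = U_1\Lambda$ given above.
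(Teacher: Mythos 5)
Your proposal is correct and follows essentially the same route as the paper: both invoke the identity $A+BK_c = U_1\Lambda U_1^{-1}$ established in the proof of Theorem~\ref{thm:clpoles} and then read off the columns of $(A+BK_c)U_1 = U_1\Lambda$ when $\Lambda$ is diagonal. Your version simply spells out the column-wise eigenvector identity and the invertibility/conjugate-pair caveats more explicitly than the paper does.
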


\begin{proof}{}
From Theorem~\ref{thm:clpoles}, consider
\begin{equation}
A + BC_KT = A+BK_c = U_{1}\Lambda U_{1}^{-1}.
\end{equation}
If the diagonal elements of $\Lambda$ are the poles of $(A+BK_c)$, the columns of $U_1$ are the corresponding eigenvectors.
\end{proof}

\begin{lem}
\label{thm:subspace}
The solution $T$ to (\ref{eqn:filtermatch_ric}) or (\ref{eqn:predictorform_ric}), obtained using the method described in Section~\ref{sec:obsbasedreworked} is unique for a given basis $\mathrm{Im} \begin{bmatrix}U_1^T & U_2^T\end{bmatrix}^T$, and is not dependent on the scaling, nor the ordering of the columns.
\end{lem}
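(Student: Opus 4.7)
The plan is to exploit the fact that $T = U_2 U_1^{-1}$ is constructed from a chosen basis of the invariant subspace, and that any other basis of the same subspace is related to the first by an invertible change-of-basis transformation. The goal is to show that this transformation cancels out in the expression for $T$.

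First I would let $\begin{bmatrix}U_1^T & U_2^T\end{bmatrix}^T$ and $\begin{bmatrix}\tilde{U}_1^T & \tilde{U}_2^T\end{bmatrix}^T$ be two $(n+n_K)\times n$ matrices whose columns span the same $n$-dimensional invariant subspace $\mathcal{S}$ of $A_{\mathrm{cl}}$. Since both are bases of the same $n$-dimensional space, there exists an invertible $M\in\mathbb{C}^{n\times n}$ such that
\begin{equation*}
\begin{bmatrix}\tilde{U}_1\\ \tilde{U}_2\end{bmatrix}
=
\begin{bmatrix}U_1\\ U_2\end{bmatrix} M,
\end{equation*}
which gives $\tilde{U}_1 = U_1 M$ and $\tilde{U}_2 = U_2 M$. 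Since $U_1$ is invertible (an assumption required for the construction in the referenced theorem), invertibility of $M$ implies that $\tilde{U}_1$ is invertible as well, so the formula $\tilde{T} = \tilde{U}_2 \tilde{U}_1^{-1}$ is well defined.

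Next I would simply substitute and simplify:
\begin{equation*}
\tilde{T}
= \tilde{U}_2 \tilde{U}_1^{-1}
= (U_2 M)(U_1 M)^{-1}
= U_2 M M^{-1} U_1^{-1}
= U_2 U_1^{-1}
= T,
\end{equation*}
so the solution produced by the procedure is invariant under any change of basis of $\mathcal{S}$.

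Finally, I would remark that column scaling corresponds to the special case where $M$ is diagonal, and reordering the columns corresponds to $M$ being a permutation matrix; both are invertible, so the two stated invariances follow immediately as corollaries of the general statement. There is no real obstacle in this proof; the only subtlety worth flagging is verifying that invertibility of $U_1$ is preserved under the basis change, which is immediate from invertibility of $M$.
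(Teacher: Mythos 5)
Your proof is correct and is essentially the same as the paper's: both introduce an invertible change-of-basis matrix relating the two representations of the invariant subspace and observe that it cancels in $\tilde{U}_2\tilde{U}_1^{-1} = U_2 M M^{-1} U_1^{-1} = T$. Your additional remarks (that scaling and permutation are special cases, and that invertibility of $\tilde{U}_1$ is preserved) are harmless elaborations of the same argument.
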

\begin{proof}
Let the columns be transformed by a full rank $n \times n$ matrix, $X$.
\begin{equation}
\begin{bmatrix}
\tilde{U}_{1}\\\tilde{U}_{2}
\end{bmatrix}
=
\begin{bmatrix}U_1\\U_2\end{bmatrix}
X
=
\begin{bmatrix}
U_1X\\U_2X
\end{bmatrix}
\end{equation}
Then, 
\begin{align}
\tilde{T} & = \tilde{U}_{2}\tilde{U}_{1}^{-1}
 = U_{2}XX^{-1}U_1^{-1} = T.
\end{align}
Therefore, $T$ depends on the span of the invariant subspace $\mathcal{S}$, not on its specific representation.
\end{proof}
\begin{dfn}[Eigenspace]
An eigenspace is the maximal invariant subspace corresponding to a particular eigenvalue of a matrix.
\end{dfn}
\begin{lem}
Assuming no repeated poles within each of the independent loops, there are $n= \mathrm{dim}(A_i)$ distinct eigenvalues, each of which corresponds to an $m$-dimensional eigenspace of $A_{\mathrm{cl}}$ --- i.e. any linear combination of the basis vectors that define the eigenspace is a valid eigenvector.
\end{lem}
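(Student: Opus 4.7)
The plan is to exploit the block-diagonal symmetry of $A_{\mathrm{cl}}$ induced by the $m$ identical, decoupled loops. First I would show that, after a suitable permutation of coordinates which groups together the plant and controller states of each loop, $A_{\mathrm{cl}}$ is similar to a block-diagonal matrix $\mathrm{blockdiag}(A_{\mathrm{cl},1},\ldots,A_{\mathrm{cl},m})$ where every block is identical and equal to
\begin{equation*}
A_{\mathrm{cl},i} = \begin{bmatrix} A_i + B_i D_{Ki} C_i & B_i C_{Ki} \\ B_{Ki} C_i & A_{Ki} \end{bmatrix}.
\end{equation*}
Since similarity preserves the spectrum and geometric multiplicities, it suffices to analyse this block-diagonal form.

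Next I would apply the hypothesis that each individual loop has no repeated poles: by construction $A_{\mathrm{cl},i}$ is the closed-loop matrix of a single loop, so its eigenvalues are the closed-loop poles of that loop, which are all distinct. Because the $m$ blocks are \emph{identical}, they share the same list of distinct eigenvalues $\lambda_1,\ldots,\lambda_{n}$, so as eigenvalues of $A_{\mathrm{cl}}$ each $\lambda_j$ has algebraic multiplicity exactly $m$.

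Then I would identify the eigenspace structure. Within a single block $A_{\mathrm{cl},i}$, distinctness of eigenvalues forces each $\lambda_j$ to have geometric multiplicity one; let $v_j$ denote the common eigenvector (the blocks being identical). Embedding $v_j$ into the $i$-th block position and padding with zeros elsewhere yields an eigenvector $v_j^{(i)}$ of the permuted $A_{\mathrm{cl}}$ with eigenvalue $\lambda_j$, and the collection $\{v_j^{(1)},\ldots,v_j^{(m)}\}$ is linearly independent. Moreover, since the remaining eigenvalues of $A_{\mathrm{cl}}$ are different from $\lambda_j$, the full eigenspace $\ker(A_{\mathrm{cl}} - \lambda_j I)$ coincides with $\mathrm{span}\{v_j^{(1)},\ldots,v_j^{(m)}\}$ and is therefore exactly $m$-dimensional. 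Undoing the permutation, any linear combination $\sum_i \alpha_i v_j^{(i)}$ remains an eigenvector of $A_{\mathrm{cl}}$ for $\lambda_j$, proving the claim.

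The main obstacle is essentially bookkeeping: writing the permutation that reveals block-diagonality from the original interleaved $(x,x_K)$ ordering, and then arguing cleanly that the geometric multiplicity equals the algebraic multiplicity (which requires the distinct-poles-within-each-loop hypothesis so that the single-block eigenspaces are one-dimensional and do not mix with those of a different $\lambda_j$). Once this is in place, the conclusion that any vector in the span is a valid eigenvector is immediate from linearity of $A_{\mathrm{cl}} - \lambda_j I$.
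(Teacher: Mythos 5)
Your proof is correct and is essentially the argument the paper intends: its entire proof reads ``By construction. Consider each subsystem separately,'' and your permutation-to-block-diagonal decomposition with identical blocks, followed by the multiplicity count, is exactly that idea written out in full.
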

\begin{proof}
By construction.  Consider each subsystem separately.
\end{proof}

\begin{lem}
A sufficient condition for the reverse engineered system to be decoupled is for $T$ to be block diagonal, with each block corresponding to an individual subsystem.
\end{lem}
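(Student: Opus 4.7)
The plan is to unfold all the matrices involved in the observer-based realisation under the hypothesis that $T$ is block diagonal, and show the resulting $K_c$, $K_f$ (and hence the observer dynamics, the feedback dynamics and the MPC cost matrices derived from them) inherit the block-diagonal structure.

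First, observe that by assumption $A$, $B$, $C$, $A_K$, $B_K$, $C_K$ (and $D_K$ where relevant) are all block diagonal with $m$ blocks, so $A_{\mathrm{cl}}$ is block diagonal after a permutation that groups the $i$th plant block with the $i$th controller block. Suppose $T = \mathrm{blkdiag}(T_1, \ldots, T_m)$ with $T_i \in \mathbb{R}^{n_{Ki} \times n_i}$. Since $T$ has full row rank, each $T_i$ has full row rank.

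Next I would verify that $K_c$ inherits the block structure: for the predictor form $K_c = C_K T$ is a product of block-diagonal matrices, and for the filter form $K_c = D_K C + C_K T$ is a sum of two block-diagonal matrices, so in both cases $K_c = \mathrm{blkdiag}(K_{c1}, \ldots, K_{cm})$ with $K_{ci} = C_{Ki} T_i$ (or $D_{Ki} C_i + C_{Ki} T_i$).

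The key step is to show that $K_f$ can likewise be chosen block diagonal. This reduces to exhibiting a block-diagonal right inverse $T^{\dagger}$. Because each $T_i$ has full row rank, each admits a right inverse $T_i^{\dagger} \in \mathbb{R}^{n_i \times n_{Ki}}$ (e.g.\ the Moore--Penrose pseudoinverse $T_i^+$), and
\begin{equation*}
T^{\dagger} \triangleq \mathrm{blkdiag}(T_1^{\dagger}, \ldots, T_m^{\dagger})
\end{equation*}
satisfies $T T^{\dagger} = I$ and is therefore a valid right inverse of $T$. Substituting into $K_f = T^{\dagger} B_K$ (predictor form) or $A K_f = T^{\dagger} B_K - B D_K$ (filter form) and using that $A$, $B_K$, $B$, $D_K$ are block diagonal gives $K_f = \mathrm{blkdiag}(K_{f1}, \ldots, K_{fm})$.

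Finally, with $K_c$ and $K_f$ both block diagonal, the observer state-update matrix ($\tilde{A} - K_f \tilde{C} + \tilde{B} K_c$ or its filter-form analogue) and the nominal closed-loop matrix $A + B K_c$ split into $m$ independent blocks. The MPC stage cost \eqref{eqn:stagecost} built from such a $K_c$ has block-diagonal weight matrices (as do the constraint sets provided they factor per subsystem), so the resulting quadratic program decomposes into $m$ independent subproblems. Hence the reverse engineered closed loop is decoupled, as required. The only non-mechanical step is the construction of the block-diagonal $T^{\dagger}$; everything else is a routine block-multiplication check, and Lemma~\ref{thm:subspace} guarantees that this $T^{\dagger}$ choice is consistent with the non-unique freedom identified earlier.
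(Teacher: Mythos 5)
Your proof is correct, and it is substantially more complete than the one in the paper. The paper's proof stops after the step you do second: it writes $K_c = C_K T$ as a block product, observes that block-diagonal $C_K$ times block-diagonal $T$ is block diagonal, and concludes. It never touches the observer gain. Your additional contribution --- constructing the block-diagonal right inverse $T^{\dagger} = \mathrm{blkdiag}(T_1^{\dagger},\ldots,T_m^{\dagger})$, verifying $TT^{\dagger}=I$ block-by-block, and propagating the structure through $K_f$, the observer update matrix, and $A+BK_c$ --- is not in the paper, yet it is arguably needed to justify the lemma as stated, since "the reverse engineered system" comprises the observer as well as the feedback gain, and a coupled $K_f$ would couple the state estimates (and hence the constrained MPC behaviour) even with a decoupled $K_c$. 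Your invocation of Lemma~\ref{thm:subspace} to confirm that the block-diagonal $T^{\dagger}$ is a legitimate use of the design freedom is also apt. Two small points: for the filter form you implicitly use $\det(A)\ne 0$ (which the paper assumes in Theorem~\ref{thm:filter}) to pass from block-diagonal $AK_f$ to block-diagonal $K_f$, and your final claim that the QP decomposes additionally requires the weight $R$ (or $R_1$, $Q_1$) to be chosen block diagonal, in the same way you already flag for the constraint sets; it would be worth stating both explicitly. Neither affects correctness.
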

\begin{proof}
By construction, $C_K$ is of the form
\begin{equation}
C_K =
\begin{bmatrix}
C_{K1}\\
& \ddots\\
& & C_{Km}
\end{bmatrix}.
\end{equation}
Therefore $K_c = C_KT$ is of the form
\begin{align}
K_c 
& 
= 
\begin{bmatrix}
K_{c1}\\
& \ddots \\
& & K_{cm}
\end{bmatrix}\\
& = 
\begin{bmatrix}
C_{K1}\\
& \ddots\\
& & C_{Km}
\end{bmatrix}
\begin{bmatrix}
T_{11} & \cdots & T_{1m}\\
\vdots & \ddots & \vdots\\
T_{m1} & \cdots & T_{mm}
\end{bmatrix}\\
& =
\begin{bmatrix}
C_{K1}T_{11} & C_{K1}T_{12} & \cdots & C_{K1}T_{1m}\\
C_{K2}T_{21} & C_{K2}T_{22} & \cdots & C_{K2}T_{2m}\\
\vdots & \vdots & \ddots & \vdots\\
C_{Km}T_{m1} & C_{Km}T_{m2} & \cdots & C_{Km}T_{mm}
\end{bmatrix}.
\end{align}
Therefore, $K_c$ will be decoupled when $T$ is of the form
\begin{equation}
T =
\begin{bmatrix}
T_{11}\\
& \ddots\\
& & T_{mm}
\end{bmatrix}.
\end{equation}
~
\end{proof}

\begin{rmk}
This is analogous to reverse engineering each of the loops individually.  In this case, $U_1$ and $U_2$ would also be block diagonal.  Intuitively, given that the original controller is being viewed as an observer on $Tx$, it makes sense that the loops should not affect each other.
\end{rmk}

\begin{proof}
Remembering that $T = U_{2}U_{1}^{-1}$, if $U_1$ and $U_2$ are block diagonal in a compatible fashion, then $T$ will be block diagonal:
\begin{equation}
\begin{bmatrix}
T_{11}\\
& \ddots\\
& & T_{mm}
\end{bmatrix}
=
\begin{bmatrix}
U_{21}\\
& \ddots\\
& & U_{2m}
\end{bmatrix}
\begin{bmatrix}
U_{11}^{-1}\\
& \ddots \\
& & U_{1m}^{-1}
\end{bmatrix}.
\end{equation}
This is effectively reverse engineering the decoupled systems individually.
\end{proof}
\begin{lem}
It is not necessary that $U_1$ and $U_2$ are block diagonal for $T$ to be block diagonal.
\label{thm:blockdiagnotnecessary}
\end{lem}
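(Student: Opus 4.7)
My plan is to establish the claim by an explicit construction, leveraging the invariance result of Lemma~\ref{thm:subspace}, which says that $T = U_2 U_1^{-1}$ depends only on the span $\mathcal{S} = \mathrm{Im}\begin{bmatrix}U_1^T & U_2^T\end{bmatrix}^T$ and not on the particular basis chosen for it. The previous lemma already exhibits a block-diagonal pair $(U_1^\circ, U_2^\circ)$ whose quotient $T^\circ = U_2^\circ (U_1^\circ)^{-1}$ is block diagonal. My strategy is then to perturb this basis by a carefully chosen, \emph{non-}block-diagonal right-multiplier $X$ and show that the resulting pair is no longer block diagonal while still producing the same $T^\circ$.

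The steps I would carry out are as follows. First, I would invoke the earlier lemma to fix a block-diagonal pair $(U_1^\circ, U_2^\circ)$ compatible with the $m$-loop partition, so that $T^\circ$ is block diagonal. Second, I would choose any invertible matrix $X \in \mathbb{R}^{n \times n}$ that is not block diagonal with respect to the partition induced by the subsystems --- a concrete choice such as $X = I + \alpha E_{ij}$ with $E_{ij}$ having a single non-zero entry $\alpha \ne 0$ in an off-diagonal block position suffices. Third, I would define $U_1 = U_1^\circ X$ and $U_2 = U_2^\circ X$ and verify two things: (i) they are not block diagonal, and (ii) $U_2 U_1^{-1} = U_2^\circ X X^{-1} (U_1^\circ)^{-1} = T^\circ$, which remains block diagonal. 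Step (ii) is essentially free from Lemma~\ref{thm:subspace}.

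The only real content is step (i). Since $U_1^\circ$ is block diagonal with invertible (hence non-zero) diagonal blocks, right-multiplication by $X = I + \alpha E_{ij}$ alters a single column of $U_1^\circ$ by adding a scalar multiple of another column drawn from a \emph{different} diagonal block of $U_1^\circ$. The resulting column therefore has non-zero entries in two distinct block-rows simultaneously, which prevents $U_1^\circ X$ from being block diagonal; the same argument applies to $U_2^\circ X$ provided the corresponding column of $U_2^\circ$ is non-zero, which can always be arranged by an appropriate choice of $(i,j)$. This yields a non-block-diagonal $(U_1, U_2)$ with block-diagonal $T$, proving the lemma.

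The main (and essentially only) obstacle is this verification that the chosen perturbation genuinely destroys the block-diagonal structure of both $U_1$ and $U_2$; it is a routine linear-algebra check rather than a conceptual difficulty. Conceptually, the result is an immediate consequence of the fact that the invariant subspace $\mathcal{S}$ admits many non-block-diagonal bases, all of which yield the same $T$.
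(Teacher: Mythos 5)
Your proof is correct and rests on the same key fact as the paper's: Lemma~\ref{thm:subspace}, i.e.\ the invariance of $T = U_2U_1^{-1}$ under right-multiplication of the basis by an invertible $X$. The paper phrases the argument in the opposite direction (it suffices that \emph{some} $X$ renders the basis block diagonal, so no structure on $U_1,U_2$ themselves is needed), whereas you construct an explicit non-block-diagonal witness by perturbing a block-diagonal basis with $X = I + \alpha E_{ij}$ --- slightly more explicit, but essentially the same idea.
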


\begin{proof}
As proved in Lemma~\ref{thm:subspace}, the solution $T$ only depends upon the choice of invariant subspace, not its representation.  Therefore, to obtain a decoupled solution, it will suffice that there exists a matrix $X$ such that
\begin{equation}
\begin{bmatrix}
U_{1}\\
U_{2}
\end{bmatrix}
X
=
\begin{bmatrix}
\bar{U}_{11}\\
& \ddots \\
& & \bar{U}_{1m}\\
\bar{U}_{21}\\
& \ddots \\
& & \bar{U}_{2m}
\end{bmatrix}.
\end{equation}
Therefore, it is not necessary for $U_1$ and $U_2$ to have any particular structure, merely, for it to be possible to construct the desired structure through linear combinations of the columns.
\end{proof}

\begin{prop}
\label{prop:completespace}
If the invariant subspace $U$ used to solve the non-symmetric Riccati equation is constructed from a set of complete eigenspaces, inappropriate cross coupling will not be introduced.
\end{prop}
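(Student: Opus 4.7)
The plan is to exploit Lemma~\ref{thm:subspace}, which tells us $T$ depends only on the span of the chosen invariant subspace and not on the particular basis we use to represent it. The goal is therefore to exhibit \emph{some} basis of the selected invariant subspace for which the matrix $\begin{bmatrix}U_{1}^{T} & U_{2}^{T}\end{bmatrix}^{T}$ is (after a trivial column reordering) block diagonal. Once that is achieved, $T = U_{2}U_{1}^{-1}$ is block diagonal and by Lemma~\ref{thm:blockdiagnotnecessary} no inter-loop coupling is introduced.

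First, I would observe that because $G$ and $K$ are block diagonal in a compatible partition, a permutation of rows and columns brings the closed-loop matrix
\begin{equation*}
A_{\mathrm{cl}} =
\begin{bmatrix}
A + BD_{K}C & BC_{K}\\
B_{K}C & A_{K}
\end{bmatrix}
\end{equation*}
into block diagonal form $\mathrm{diag}(A_{\mathrm{cl},1},\ldots,A_{\mathrm{cl},m})$, with identical diagonal blocks $A_{\mathrm{cl},i}$ of size $(n/m)+(n_{K}/m)$. Each distinct eigenvalue $\lambda$ of $A_{\mathrm{cl},1}$ is therefore an eigenvalue of $A_{\mathrm{cl}}$ with geometric multiplicity (at least) $m$; the associated complete eigenspace of $A_{\mathrm{cl}}$ is spanned, in the permuted coordinates, by the $m$ vectors whose only nonzero block is the common eigenvector $v$ of $A_{\mathrm{cl},i}$, placed successively in slots $1,\ldots,m$.

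Second, because the prescription of the proposition is to build the invariant subspace $\mathcal{S}$ as a union of \emph{complete} eigenspaces, each such eigenspace already carries within it this distinguished basis of ``single-loop-supported'' vectors. Collecting these bases across all selected eigenvalues yields a basis of $\mathcal{S}$ in which every basis vector is supported in exactly one subsystem's coordinate block. Reordering the columns so that all basis vectors supported in loop $i$ are grouped together produces a matrix $\begin{bmatrix}U_{1}^{T} & U_{2}^{T}\end{bmatrix}^{T}$ that is block diagonal (after the inverse row permutation that undoes the initial one on $A_{\mathrm{cl}}$). By Lemma~\ref{thm:subspace} this reordering and rescaling leaves $T$ unchanged, and by the construction preceding Lemma~\ref{thm:blockdiagnotnecessary} the resulting $T$ is block diagonal, hence so are $K_{c}=C_{K}T$ and, via $AK_{f}=T^{\dagger}B_{K}-BD_{K}$ with $T^{\dagger}$ taken block diagonal, $K_{f}$ as well.

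The step I expect to be the main obstacle is the second one, namely justifying rigorously that the complete $\lambda$-eigenspace of $A_{\mathrm{cl}}$ admits a basis of single-loop-supported vectors in the presence of possible Jordan structure within a single $A_{\mathrm{cl},i}$ or coincidences between eigenvalues of different $A_{\mathrm{cl},i}$. The identical-subsystem hypothesis forces any eigenvalue of one block to appear in every block, so the geometric multiplicity in $A_{\mathrm{cl}}$ is an integer multiple of $m$; taking the \emph{complete} eigenspace (rather than an arbitrary subspace of it) is precisely what guarantees that we may factor the eigenspace into its $m$ single-loop pieces. If the invariant subspace were instead allowed to intersect an eigenspace only partially, an arbitrary linear combination across loops could be selected, reintroducing coupling --- and this is exactly what the proposition's hypothesis rules out.
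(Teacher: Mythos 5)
Your proposal is correct and takes essentially the same route as the paper: the paper's proof (given for the equivalent Corollary~\ref{thm:u1u2completeig}, with the supporting observation in the remark following the proposition) likewise notes that unsplit $m$-dimensional eigenspaces of the block-diagonal $A_{\mathrm{cl}}$ decompose loop-by-loop, so a decoupled solution exists, and then invokes the basis-independence/uniqueness of $T$ for a given subspace (Lemma~\ref{thm:subspace}) to conclude the computed $T$ must be that block-diagonal one. Your version simply makes the construction of the single-loop-supported eigenbasis more explicit than the paper does, and your closing worry is already disposed of by the paper's standing assumption of no repeated poles within each individual loop.
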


\begin{rmk}
In a decoupled system, with $m$ identical subsystems, each distinct eigenvalue will have associated with it an $m$-dimensional eigenspace.  Due to the construction of the original system, it is possible to interpret each of the $m$ dimensions as corresponding to each of the original decoupled subsystems.
Therefore, the conditions on $U_1$ and $U_2$ required in Theorem~\ref{thm:blockdiagnotnecessary} will be fulfilled automatically if the eigenspaces are not split.
\end{rmk}

\begin{prop}
If the $m$-dimensional eigenspaces are only partially used when choosing $\begin{bmatrix}U_1^T&U_2^T\end{bmatrix}^T$, cross-coupling in $K_c$ is not inevitable under some circumstances.  Whether cross-coupling is introduced depends on the manner in which the $m$-dimensional invariant subspaces are split.  Consider one of the $m$-dimensional eigenspaces, with basis
\begin{equation}
\begin{bmatrix}
| & & |\\
v_{1,1} & \cdots & v_{1,m}\\
| & & |\\
| & & |\\
v_{2,1} & \cdots & v_{2,m}\\
| & & |
\end{bmatrix} = 
\begin{bmatrix}
V_{1}\\V_{2}
\end{bmatrix}.
\end{equation}
Because $V_{1}$ is of rank $m$, there exists a $m \times m$ transformation matrix $Y$ such that
\begin{equation}
\begin{bmatrix}
| & | & & |\\
v_{1,1} & v_{1,2} & \cdots & v_{1,m}\\
| & | & & |
\end{bmatrix} Y
=
\begin{bmatrix}
\tilde{v}_{1} &  0 & \cdots & 0\\
0 & \tilde{v}_{2} &  \cdots & 0\\
0 & 0 & \cdots & 0\\
0 & 0 & \cdots & \tilde{v}_{m}
\end{bmatrix}.
\end{equation}
The columns can then be freely exchanged by postmultiplying by a permutation matrix.  A subset of the columns of $\begin{bmatrix}V_1^T & V_2^T\end{bmatrix}^TY$ can then be used to form part of the invariant subspace of $A_{cl}$, $\begin{bmatrix}U_1^T & U_2^T\end{bmatrix}^T$, without causing coupling between the nominally independent loops.
\end{prop}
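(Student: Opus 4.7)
The plan is to construct the transformation $Y$ explicitly by exploiting the block-diagonal structure of $A_{\mathrm{cl}}$, and then use Lemma~\ref{thm:subspace} to argue that the resulting change of basis within the eigenspace does not affect $T$. First I would observe that because $A_{\mathrm{cl}}$ is block diagonal with $m$ identical blocks, any eigenvalue of one block is an eigenvalue of $A_{\mathrm{cl}}$ with algebraic multiplicity $m$, and the associated eigenspace admits a natural basis in which the $i$-th basis vector is supported only on the $i$-th subsystem's coordinates. Call this the \emph{canonical basis} $\{w_1,\ldots,w_m\}$; writing each $w_i$ as the stacked pair $[w_i^{(1)^T}, w_i^{(2)^T}]^T$ corresponding to the plant-state and controller-state partitions, both collections $\{w_i^{(1)}\}$ and $\{w_i^{(2)}\}$ share the same subsystem-localised support pattern.

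Next I would show that the $Y$ claimed in the statement always exists. Since $V_1$ has rank $m$ and both the canonical basis and the columns $v_{1,1},\ldots,v_{1,m}$ span the same (full $m$-dimensional) eigenspace once lifted back through $[V_1^T,V_2^T]^T$, there is an invertible $m \times m$ matrix $Y$ such that the columns of $[V_1^T, V_2^T]^T Y$ coincide with $w_1,\ldots,w_m$ (or any reordering thereof). In particular $V_1 Y$ has exactly one nonzero entry per column in the block corresponding to a distinct subsystem, giving the displayed diagonal structure; crucially, $V_2 Y$ inherits the same subsystem-localised sparsity because each $w_i$ has support on one subsystem only.

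I would then invoke Lemma~\ref{thm:subspace} to conclude that replacing the original basis of the eigenspace by $[V_1^T,V_2^T]^T Y$ (and further permuting columns by any permutation matrix $P$) leaves the image unchanged, and hence yields the same $T$. Therefore, if one forms $\begin{bmatrix}U_1^T & U_2^T\end{bmatrix}^T$ by selecting, across each chosen eigenvalue, a subset of the canonical basis vectors in such a way that the subsystem indices used for each eigenvalue are preserved consistently, the resulting $U_1$ and $U_2$ will be block diagonal in a compatible fashion, and the previous lemma then forces $T$ to be block diagonal and hence $K_c$ to be decoupled.

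The main obstacle I would expect is being precise about how the subsystem-localised subset is chosen across multiple eigenvalues simultaneously: picking $m'<m$ canonical vectors within a single eigenspace is harmless, but if different eigenvalues are associated with different subsystem indices the aggregate $U_1$ need not remain square of dimension $n$, so the selection rule must ensure that the union of used indices covers exactly the states of the subsystems being retained. Once this bookkeeping is handled, the block-diagonality of $T$ (and therefore the absence of induced cross-coupling in $K_c = C_K T$) follows from the construction.
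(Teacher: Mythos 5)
Your proposal is correct and follows essentially the same route as the paper, which embeds its argument directly in the proposition statement: the existence of a transformation $Y$ diagonalising the eigenspace basis into subsystem-localised columns, followed by permutation and selection of a subset, with the span-invariance of $T$ (Lemma~\ref{thm:subspace}) and the block-diagonal-$T$ lemma doing the remaining work. Your version is in fact slightly more careful than the paper's, since you justify the existence of $Y$ via the canonical subsystem-supported basis of the eigenspace of the block-diagonal $A_{\mathrm{cl}}$ (rather than only the rank of $V_1$) and you flag the cross-eigenvalue bookkeeping needed to keep $U_1$ square, which the paper leaves implicit.
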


\begin{thm}
If the $m$-dimensional eigenspaces are split when choosing $\begin{bmatrix}U_1^T & U_2^T\end{bmatrix}^T$, such that in neither of the resulting subspaces is it possible to obtain a decoupled structure through linear combinations of their respective bases, cross-coupling will introduced.
\end{thm}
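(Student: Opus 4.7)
The plan is to argue by contraposition: assume no cross-coupling is introduced and deduce that every split eigenspace admits a decoupled substructure through linear combinations of its basis, contradicting the hypothesis. Suppose $K_c = C_K T$ is block diagonal conforming to the $m$ subsystems. Since $C_K$ is itself block diagonal with blocks $C_{Ki}$ of full row rank (the original subsystem controllers being minimal realisations), each off-diagonal block $C_{Ki} T_{ij}$ of $K_c$ vanishes only if $T_{ij} = 0$. Hence $T$ is forced to be block diagonal.

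By Lemma~\ref{thm:subspace} the matrix $T = U_2 U_1^{-1}$ depends only on the invariant subspace $\mathcal{S} = \mathrm{Im}\begin{bmatrix}U_1\\U_2\end{bmatrix}$, not on the chosen basis. A block-diagonal $T$ therefore implies, by the construction exhibited in the proof of Lemma~\ref{thm:blockdiagnotnecessary}, that there exists an invertible change of basis $X$ such that $\begin{bmatrix}U_1\\U_2\end{bmatrix}X$ has the subsystem-aligned block-diagonal structure with blocks $\bar U_{1i}, \bar U_{2i}$. Equivalently, $\mathcal{S}$ admits a basis in which each basis vector lies in a single subsystem state subspace $V_i$.

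The final step is to restrict this subsystem-aligned decomposition to each eigenspace $\mathcal{E}_j$ of $A_{\mathrm{cl}}$. Under the assumption of no repeated subsystem poles, the $m$-dimensional eigenspace $\mathcal{E}_j$ decomposes as $\mathcal{E}_j = \bigoplus_i (V_i \cap \mathcal{E}_j)$, with each summand one-dimensional. The subsystem-aligned form of $\mathcal{S}$ gives $\mathcal{S} = \bigoplus_i (V_i \cap \mathcal{S})$. Each $V_i \cap \mathcal{S}$ is $A_{\mathrm{cl}}$-invariant, being the intersection of two $A_{\mathrm{cl}}$-invariant subspaces, and so decomposes across eigenvalues as $V_i \cap \mathcal{S} = \bigoplus_j (V_i \cap \mathcal{S}_j)$ with $\mathcal{S}_j := \mathcal{S} \cap \mathcal{E}_j$. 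Interchanging the direct sums yields $\mathcal{S}_j = \bigoplus_i (V_i \cap \mathcal{S}_j)$ for every $j$, that is, each $\mathcal{S}_j$ is coordinate-aligned in the natural subsystem basis of $\mathcal{E}_j$. By the coordinate complementarity of subspaces within $\mathcal{E}_j$, the same alignment holds for any complementary subspace used for the observer-error side. This contradicts the hypothesis that, in some split eigenspace, neither of the resulting subspaces admits a decoupled structure through linear combinations of its basis, and completes the argument.

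The main obstacle is the final restriction step. What makes it work is the $A_{\mathrm{cl}}$-invariance of $V_i \cap \mathcal{S}$, which permits the eigenspace decomposition to act on each subsystem-aligned summand individually, together with the mutual compatibility of the subsystem decomposition $\bigoplus_i V_i$ and the eigenspace decomposition $\bigoplus_j \mathcal{E}_j$ (the intersections $V_i \cap \mathcal{E}_j$ being one-dimensional). Without these two facts one could not conclude that a globally subsystem-aligned basis of $\mathcal{S}$ induces the required coordinate alignment of each $\mathcal{S}_j$ inside its host eigenspace.
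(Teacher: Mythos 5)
Your strategy (contraposition, then restriction of a subsystem-aligned invariant subspace to the individual eigenspaces) is genuinely different from the paper's proof, which argues directly and rather informally that any selection of fewer than $m$ columns of a split eigenspace whose decoupled representation carries a dense factor $\Gamma$ must couple the loops. Your steps 2 and 3 are sound and in fact tighter than the paper's: block-diagonal $T$ is equivalent to $\mathcal{S}=\bigoplus_i(V_i\cap\mathcal{S})$, the intersections $V_i\cap\mathcal{S}$ are $A_{\mathrm{cl}}$-invariant, and since each $V_i\cap\mathcal{E}_j$ is one-dimensional the restriction $\mathcal{S}\cap\mathcal{E}_j$ is spanned by subsystem-aligned eigenvectors, contradicting the hypothesis. (The closing remark about the complementary observer-error subspace inheriting the alignment is both unnecessary --- one decouplable part already contradicts ``neither'' --- and not actually true in general, since the complement of a coordinate-aligned subspace need not be coordinate-aligned; drop it.)

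The genuine gap is the first link of the chain. You deduce $T_{ij}=0$ from $C_{Ki}T_{ij}=0$ on the grounds that $C_{Ki}$ has full \emph{row} rank, but that is the wrong rank condition: you need $C_{Ki}$ to be injective, i.e.\ of full \emph{column} rank, and this fails whenever a subsystem controller has more states than outputs ($n_K^{(i)}>n_u^{(i)}$), which is the typical case --- in the paper's own satellite example $C_K$ is $1\times 2$ and has a one-dimensional kernel. Consequently ``$K_c=C_KT$ block diagonal'' does not imply ``$T$ block diagonal'': off-diagonal blocks $T_{ij}$ with columns in $\ker C_{Ki}$ are invisible to $K_c$ yet still couple the observer realisation through $K_f=T^{\dagger}B_K$ and the estimator dynamics. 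To repair the argument you must either (i) strengthen the contrapositive hypothesis to ``the entire observer-compensator realisation ($K_c$, $K_f$ and the estimate dynamics) is decoupled,'' which is the sense of cross-coupling the paper actually intends and which does pin $T$ down as block diagonal, or (ii) explicitly restrict to the case $n_u^{(i)}\ge n_K^{(i)}$ with $C_{Ki}$ injective. As written, the proof establishes only that a coupled $T$ arises, not that the stated notion of cross-coupling (read off $K_c$ alone) must appear.
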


\begin{proof}
There exists a transformation matrix $Y$ such that
\begin{equation}
\begin{bmatrix}
| & | & & |\\
v_{1,1} & v_{1,2} & \cdots & v_{1,m}\\
| & | & & |
\end{bmatrix} Y
=
\begin{bmatrix}
\tilde{v}_{1} &  0 & \cdots & 0\\
0 & \tilde{v}_{2} &  \cdots & 0\\
0 & 0 & \cdots & 0\\
0 & 0 & \cdots & \tilde{v}_{m}
\end{bmatrix}
\Gamma
\end{equation}
where $\Gamma$ is dense.
In this scenario, if fewer than $m$ columns of $\begin{bmatrix}V_1^T & V_2^T\end{bmatrix}^TY$ are used to construct $\begin{bmatrix}U_1^T & U_2^T\end{bmatrix}^T$, cross-coupling between the independent loops will be introduced if the number of columns selected from $\begin{bmatrix}V_1^T & V_2^T\end{bmatrix}^TY$ is fewer then the number of loops for which a non-zero element exists in any of the columns.
\end{proof}
\begin{rmk}
In other words, cross coupling will be inevitably introduced if any of the eigenspaces is split in such a way that neither of the resulting subspaces can be represented in a way that separates the loops.
\end{rmk}

\begin{cor}{}
\label{thm:u1u2completeig}
If $\mathrm{span}\left(\begin{bmatrix}U_1^T & U_2^T\end{bmatrix}^T\right)$ only contains complete eigenspaces of $A_{cl}$, cross-coupling will not be introduced in $K_c$.  \end{cor}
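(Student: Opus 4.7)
The plan is to assemble the result directly from the preceding lemmas, without any fresh computation. The strategy is: show that the span of a collection of complete eigenspaces admits a basis which, when partitioned vertically into $U_1$ (plant states) and $U_2$ (controller states), is block diagonal with respect to the loop decomposition; then invoke the earlier chain of lemmas to conclude that $T$, and hence $K_c$, is block diagonal.

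First I would invoke Lemma~\ref{thm:subspace}, which tells us that $T = U_2 U_1^{-1}$ depends only on the span of $\begin{bmatrix}U_1^T & U_2^T\end{bmatrix}^T$, not on the particular basis chosen. This gives us full freedom to change basis within the invariant subspace without affecting the reverse-engineered controller. Next I would use the preceding lemma stating that, because the plant and controller are built from $m$ identical decoupled copies, each distinct closed-loop eigenvalue has an $m$-dimensional eigenspace. For each such eigenspace that is contained in $\mathrm{span}\bigl(\begin{bmatrix}U_1^T & U_2^T\end{bmatrix}^T\bigr)$, the transformation argument from the preceding proposition (the matrix $Y$ construction) produces an equivalent basis in which each of the $m$ basis vectors is supported on a single loop — i.e. its plant-state and controller-state components both lie in the coordinates of exactly one of the $m$ subsystems.

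Now suppose $k$ complete eigenspaces have been selected, giving $km$ basis vectors in total, with exactly $k$ vectors supported on each loop. A column permutation groups together all vectors belonging to loop $i$, for $i = 1, \ldots, m$. The resulting matrix has the form
\begin{equation*}
\begin{bmatrix}
\bar{U}_{11} & & \\
& \ddots & \\
& & \bar{U}_{1m}\\
\bar{U}_{21} & & \\
& \ddots & \\
& & \bar{U}_{2m}
\end{bmatrix},
\end{equation*}
exactly the structure appearing in the proof of Lemma~\ref{thm:blockdiagnotnecessary}. By that lemma, $T = U_2 U_1^{-1}$ is therefore block diagonal in the loop partition, and by the earlier lemma that block-diagonal $T$ yields block-diagonal $K_c = C_K T$, no cross-coupling is introduced.

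The only mildly delicate step is verifying that the loop-by-loop basis straightening can be performed independently within each selected eigenspace and then merged into a single global block-diagonal basis after permutation. This is straightforward because the $Y$-transformation acts within a single eigenspace only (it is a change of basis within that eigenspace) and does not couple different eigenspaces; the subsequent column permutation to group by loop is well-defined because each transformed basis vector is labelled by a unique loop index. Hence the construction goes through, and the corollary follows.
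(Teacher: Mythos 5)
Your proof is correct and follows essentially the same route as the paper's: exhibit a loop-decoupled representative of the chosen invariant subspace, then invoke the basis-independence of $T$ from Lemma~\ref{thm:subspace} to conclude that $T$, and hence $K_c = C_K T$, must be block diagonal. The only difference is one of explicitness --- you construct the decoupled basis concretely by straightening each complete eigenspace with the $Y$-transformation and permuting columns by loop, whereas the paper simply asserts that a decoupled solution exists by appeal to reverse-engineering each loop individually and then uses uniqueness; your version fills in that implicit step.
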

\begin{proof}
If none of the $n$, $m$-dimensional eigenspaces are split (i.e. they are used to construct $U$ in their entirety or not at all), the pole allocation between observer and feedback will be equivalent to the reverse engineering, then recombination of each of the loops individually.
Suppose that cross-coupling between the loops is introduced.  Then, there needs to be a representation of the invariant subspace of $A_{\mathrm{cl}}$,  $\begin{bmatrix}U_1^T & U_2^T\end{bmatrix}$ such that cross coupling is introduced.  However, for any given invariant subspace for which a solution $T$ exists, $T$ is unique.  A decoupled solution is known to exist, and this must, therefore, be the only solution.
\end{proof}

\section{Case Studies}
\label{sec:casestudy}
\subsection{Spacecraft Attitude Control --- Fault robustness using redundant actuators}
\label{sec:scattitude}
This example demonstrates the procedure on the sampled-data output-feedback controller from \cite[ex 9.4.1]{Sidi1997} for a single axis attitude control system with angle measurement only, and shows how the implicit daisy-chaining of constrained MPC \citep{Mac1998} can be exhibited by a reverse-engineered controller.  A continuous-time linear model is used for simulation, and its zero-order-hold discretisation used for prediction.

\subsubsection{Model}
The plant is modelled as an ideal inertial load with a moment of inertia $J = \unit{500}{\kilo\gram~\meter^{-2}}$.  To demonstrate how reverse-engineered MPC can be used to add extra functionality to the original controller, a redundant torque pair input is added, and the model is also augmented with a constant torque disturbance state.  The sampling period $T_s = \unit{0.25}{\second}$.
\begin{equation}
G(z) =
\left[
\begin{array}{ccc|cc}
1 & 0.25 & 0.00358 & 0.00358 & 0.00358\\
0 & 1 & 0.02865 & 0.02865 & 0.02865\\
0 & 0 & 1 & 0 & 0\\
\hline
0.01745 & 0 & 0 & 0 & 0
\end{array}
\right]
\end{equation}

\subsubsection{Baseline controller}
The baseline controller does not use the second torque pair,
\begin{equation}
K_0(z) =
\left[
\begin{array}{cc|c}
1.412 & -0.8235 & 32\\
0.5 & 0 & 0\\
\hline
13.01 & -26.14 & -871\\
0 & 0 & 0
\end{array}
\right]
\end{equation}
and has poles at $1$ (integral action), $0.41$ and zeros at $0.98$ and $0.91$.  

\subsubsection{Observer-compensator realisation}
The magnitude of the $D_K$ term is large in comparison to other terms in the controller matrices, and the controller is being designed to counteract external disturbances, so the estimate of $y$ may have significant error during transients caused by these disturbances.  Figure~\ref{fig:attitudeloopshiftobserver} shows the observer error dynamic responses for each of the possible realisations when loop-shifting is used --- it is clear that the error is substantial.  Loop shifting is therefore not considered an option for this system if input constraints are to be enforced.  Adding a unit delay is also not an option because it noticeably changes the system response step response to the disturbance (Figure~\ref{fig:attitudecompareperformance1}).

\begin{figure*}[htbp]
\includegraphics[width=\textwidth]{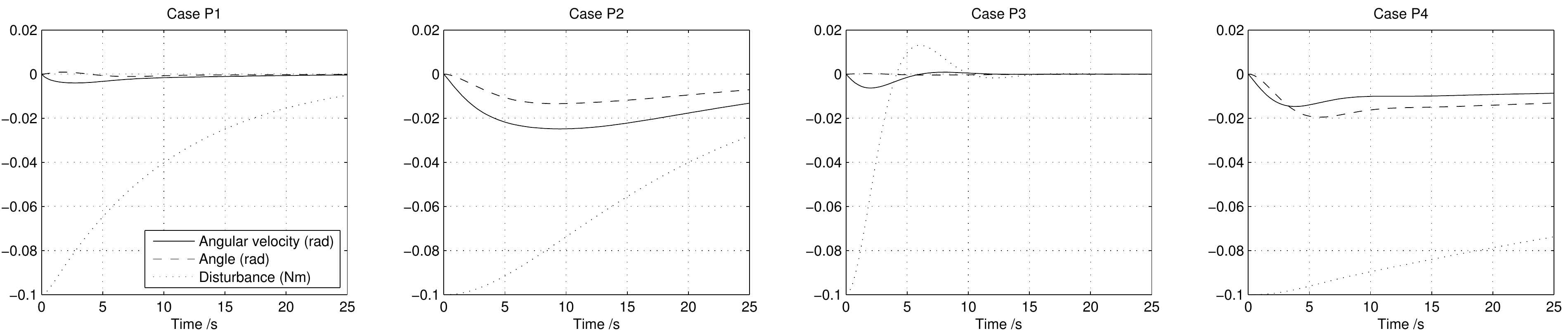}
\caption{Observer error dynamics using loop-shifting}
\label{fig:attitudeloopshiftobserver}
\end{figure*}

\begin{figure}[htbp]
\centering
\includegraphics[width=0.475\textwidth]{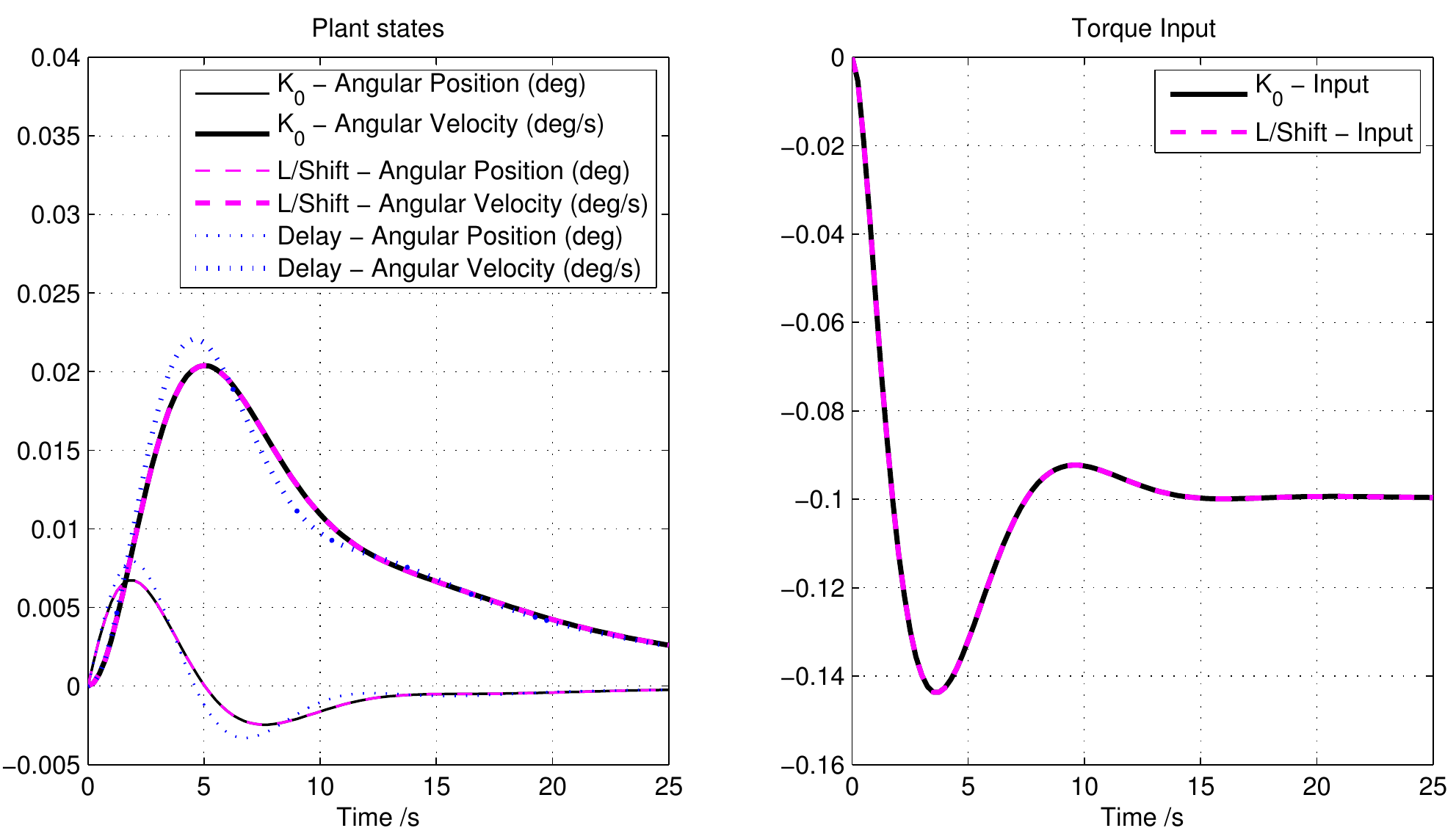}
\caption{Comparison of loop shifting, unit delay and $K_0$ closed loop performance}
\label{fig:attitudecompareperformance1}
\end{figure}
The remaining option is to obtain a filter-form observer-compensator realisation.  Whilst $K_0(0) \ne 0$, so a dipole can be added to make this condition hold:
\begin{equation}
K_1(z) = K_0(z) \frac{50z}{50z-1}.
\end{equation}
Reassuringly, the closed loop poles do not move significantly (Table~\ref{tbl:attclpoles}).%
\begin{table}[h]
\caption{Closed-loop poles for unconstrained attitude control system}
\label{tbl:attclpoles}
\footnotesize
\centering
\renewcommand{\arraystretch}{1.25}
\begin{tabular}{cc}
\hline
\hline
Original & With Dipole\\
\hline
$1$ & $1$\\
$0.9764$ & $0.9764$\\
$0.9115 \pm \mathrm{j}0.1192$ & $0.9086 \pm \mathrm{j}0.1204$\\
0.5579 & $0.5660$\\
& $0.0177$\\
\hline
\hline
\end{tabular}
\end{table}
There are three plant states (including the disturbance), and, now, three controller states. The disturbance state is uncontrollable, and the complex pole pair cannot be split, so there are four possible choices of $T$ giving the realisations in Table~\ref{tbl:attobscombos}---``S'' is used to mean a pole placed in the state feedback, and ``O'' the observer error dynamics.  In reality there will be a small transport delay between measurement and application of the control action due to computation not being instantaneous.  For this reason, the phase margin and the delay margin, having broken the loop just after the MPC calculation (but before feeding back to the observer and plant) are considered.
\begin{table}[h]
\caption{Observer-based realisations for attitude system}
\footnotesize
\label{tbl:attobscombos}
\centering
\renewcommand{\arraystretch}{1.25}
\begin{tabular}{ccccc}
\hline
\hline
 & R1 & R2 & R3 & R4\\
\hline
$0.0177$ & O & S & S & O\\
$0.5660$ & O & S & O & S\\
$0.9086 + \mathrm{j}0.1204$ & S & O &O &O\\
$0.9086 - \mathrm{j}0.1204$ & S & O &O &O\\
$0.9764$ & O & O & S & S\\
$1$ & S & S & S & S\\
\hline\hline
Delay margin & $4.36T_s$ & $0.51T_s$ & $0.98T_s$ & $2.83T_s$\\
Gain margin & $11.67$ & $1.66$ & $2.01$ & $4.42$\\
\hline\hline
$\| G_{y\rightarrow \hat{e}}(z) \|_{2}$ & $29.95$ & $12.31$ & $18.89$ & $89.05$\\
$\| G_{d \rightarrow \hat{x}}(z)\|_{2}$ & $5.03$ & $5.59$ & $3.17$ & $2.99$\\
$\|G_{y\rightarrow \hat{e}}\|_2\times \|G_{d\rightarrow\hat{x}}\|_2$ & $150.5319$ & $68.7844$ & $59.8672$ & $89.0512$\\
\hline\hline
\end{tabular}
\end{table}

Superficially, the data from Table~\ref{tbl:attobscombos} suggests that realisation R1 is a sensible starting point for development of an MPC controller.  The fastest poles are in the observer error dynamics, and the delay margin and the gain margin at the artificial ``cut'' are excellent.  Furthermore, if implemented using a single discrete sampling rate (e.g. a basic implementation in Simulink), realisations R2 and R3 become unstable because the inevitable delay causes a zero-order hold sampling time to be ``missed'' introducing a whole unit delay.

However, Proposition~\ref{prop:Ginf} suggests something rather different.  In fact the observer in R1 is very slow to estimate unmeasured disturbances (resulting in large errors in the estimates of other states).  R4 does not have this issue, however, it amplifies measurement noise (albeit absent in this example) in an unpleasant manner.  Therefore, the best overall realisation is R3.  If the information flow is managed carefully (e.g. using multiple sampling rates in Simulink, as proposed in Algorithm~\ref{alg:multirate}), the computation delay need only be a small fraction of the sampling period, and can be moved to ``after'' the point at which the calculated control action has been fed back to the observer, but before the plant---thus recovering the behaviour of the original controller even for R2 and R3, and rendering the calculated ``delay margin'' at the original ``cut'' (Figure~\ref{fig:delaycut}) irrelevant.

\subsubsection{MPC implementation -- Cost function and constraints}
Table~\ref{tbl:attcases} enumerates the controller configurations used to produce the closed loop responses to the step disturbance depicted in Figure~\ref{fig:attcases}.  When cost function (\ref{eqn:stagecost}) is used, $R=I$.  When cost function (\ref{eqn:stattrack}) is used, $R_1=10^{-3}I$ and $Q_1=10^3I$ to indicate a strong weighting for matching the ``effect'' of the control action on the state and a weak weighting on the exact original control configuration.

\begin{table}[htbp]
\caption{Reverse engineered MPC with input constraints}
\label{tbl:attcases}
\footnotesize
\renewcommand{\arraystretch}{1.25}
\centering
\begin{tabular}{ccccc}
\hline\hline
&  $K_0$ & Case 1 & Case 2 & Case 3\\
\hline
Realisation & $K_0$ & R3 & R3 & R3\\
Cost function & -- & (\ref{eqn:stagecost}) & (\ref{eqn:stagecost}) & (\ref{eqn:stattrack})\\
Prediction/Control Horizon & -- & 15 & 15 & 15\\
Constraints & -- & -- & $|u_i| \le 0.11$ & $|u_i| \le 0.11$\\
\hline
\hline
\end{tabular}
\end{table}

\begin{figure}[htbp]
\centering
\includegraphics[width=0.475\textwidth]{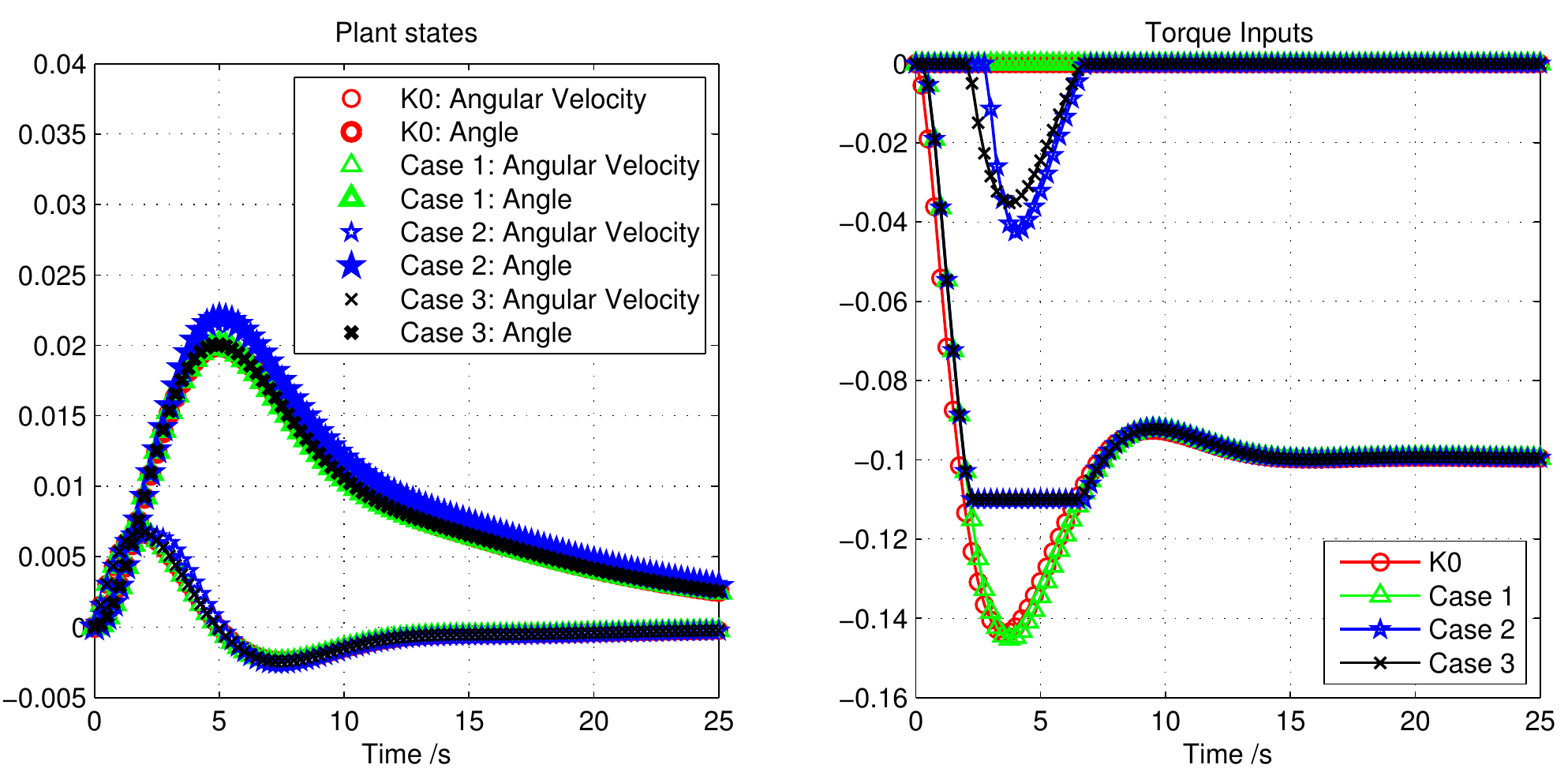}
\caption{Reverse engineered MPC with input constraints}
\label{fig:attcases}
\end{figure}

Figure~\ref{fig:attcases} shows that in unconstrained MPC Case 1, the matching with the behaviour of the original output feedback controller is close although there is a slight difference, attributed primarily to a $T_s/10$ delay introduced to ensure deterministic data transfer ($K_0$ is assumed to be implemented instantaneously).  In MPC Case 2, input constraints are deliberately imposed at a level lower than the peak of the unconstrained input trajectory.  Whilst the second torque pair can be seen to boost the correction of the disturbance, this is slightly slower than the unconstrained case.  Nevertheless, this demonstrates that the objective embedded in the cost function (\ref{eqn:stagecost}) could be adequate in this scenario.  In MPC Case 3, where the cost function better encodes the consequences of the control action rather than its specific realisation, the output trajectory is identical to the unconstrained case, with the second torque pair being used to exactly match the net torque trajectory of the unconstrained case.

\subsubsection{MPC implementation -- Output constraints and fault recovery}
In MPC Cases 4 and 5, an output constraint is also added to constrain the angle $y \le 0.01$, representing an attitude pointing requirement.  To ensure feasibility of the optimisation problem despite observer estimation error, these are softened with a quadratic weighting of $10^5$.  In Case 4, the input constraints are relaxed, and in Case 5, an unmodelled plant failure is introduced at $t=\unit{3}{\second}$.  

\begin{table}[htbp]
\caption{Reverse engineered MPC with state constraints}
\label{tbl:attcasesoutput}
\footnotesize
\renewcommand{\arraystretch}{1.25}
\centering
\begin{tabular}{ccccc}
\hline\hline
&  $K_0$ & Case 4 & Case 5 \\
\hline
Realisation & $K_0$ & R4 & R4\\
Cost function & -- & (\ref{eqn:stagecost}) & (\ref{eqn:stattrack})\\
Input Constraints & -- & $|u_i| \le 1$ & $|u_i| \le 0.15$\\
State Constraints & -- & $|y| \le 0.01$ & $|y| \le 0.01$\\
Torque 1 failure & -- & -- & \unit{3}{\second}\\ 
\hline
\hline
\end{tabular}
\end{table}

\begin{figure}[htbp]
\centering
\includegraphics[width=0.475\textwidth]{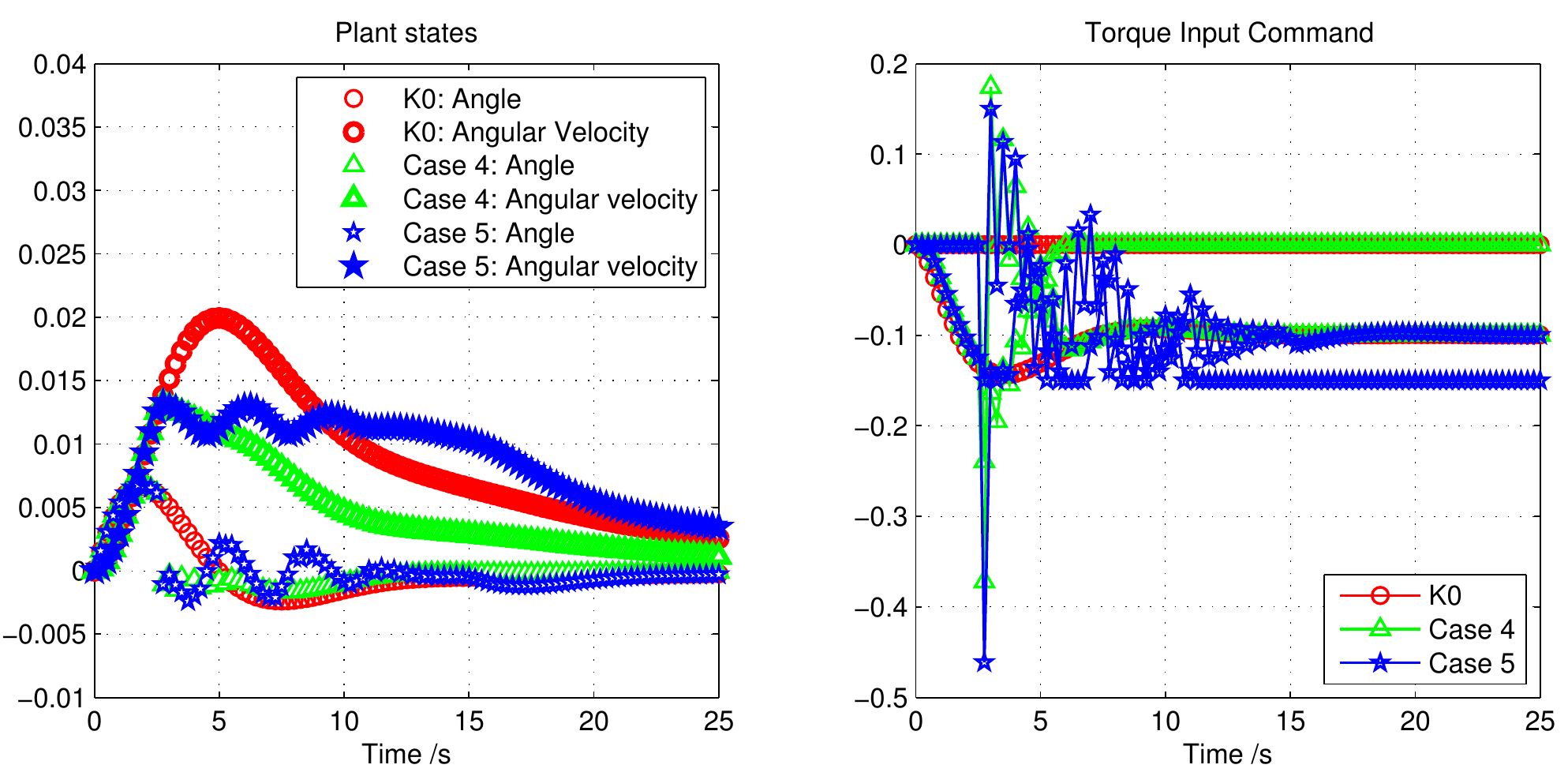}
\caption{Reverse engineered MPC with state constraints and unmodelled plant failures}
\label{fig:attcasesoutput}
\end{figure}
Because of the constraint softening and the inevitable state estimation error, constraints are not enforced exactly, however, Figure~\ref{fig:attcasesoutput} shows that the violations are not large.

\subsubsection{MPC implementation -- Tools}
The constrained reverse-engineered MPC controller is implemented in Simulink using a custom condensed QP builder that accommodates cross-terms between input and states (i.e. equality constraints for state dynamics are eliminated to form a dense QP).  The QP is solved using an Embedded MATLAB implementation of the dual active-set algorithm of \citep{GI1983}.  The prediction horizon is 15, with 4 input constraints per time step, and 2 (softened) output constraints per time step, leading to a QP with 45 decision variables (the slack variable is shared between the upper and lower bound constraints on outputs) and 90 inequality constraints.  Measured using the Simulink Profiler tool, the QP solver takes on average \unit{0.3}{\milli\second} to solve on a \unit{2.8}{\giga\hertz} Mac Pro running MATLAB R2010b on Scientific Linux 6 in a virtual machine using a single core.  This is small in comparison to the $T_s/10 = \unit{0.025}{\second}$ that has been allowed for computation in these simulations.  Assuming an approximately linear scaling with clock speed, this means that a \unit{40}{\mega\hertz} processor could be sufficient to implement the reverse-engineered constrained controller in real-time.  The subdivision of the sampling period to approximate the direct feed-through is therefore a demonstrably practical option.

%
%

\subsection{Inverted Pendulum on a Cart --- Output constraints with an unstable plant}
\label{sec:invpend}

This example demonstrates the procedure when a local linearisation of a nonlinear model is used for prediction, and shows that output constraints can be enforced as with conventional MPC design.  Unit (\unit{1}{\meter}) step changes in the cart position reference are tracked using the method presented in Section~\ref{sec:reftrack}, whilst maintaining pendulum stability.

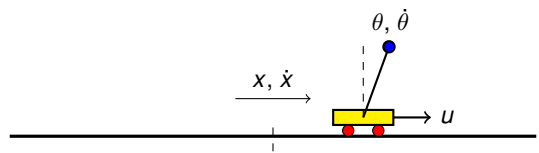
\begin{figure}[htbp]
\centering
\footnotesize
\begin{tikzpicture}
\draw[very thick](-3.5,0) -- (3.5,0);

\draw[dashed](0,-0.2) -- (0,0.2);

\begin{scope}[xshift=1cm,yshift=0.075cm]
\draw[fill=red](0,0) circle (0.075);
\draw[fill=red](0.4,0) circle (0.075);
\draw[thick,fill=yellow] (-0.2,0.075) rectangle ++(0.8,0.2);

\draw[thick] (0.2, 0.175) -- ++(70:1) node[circle,draw, inner sep=1.5pt, fill=blue]{};
\draw[dashed] (0.2, 0.175) -- node[above right,pos=1]{$\theta$, $\dot{\theta}$} ++(90:1);

\draw[thick,->](0.6,0.175) -- ++(0.5,0) node[right]{$u$};

\end{scope}

\draw[->](-0.5,0.5) -- node[above]{$x$, $\dot{x}$}++(1,0);
\end{tikzpicture}
\caption{Cart-pendulum model}
\label{fig:cpmodel}
\end{figure}

\subsubsection{Model}

A non-linear continuous-time plant model is used for simulation, whilst a linearisation about the unstable equilibrium point is used for prediction.  The model data and the baseline controller are taken from \citep{GGS2001}.  The pendulum mass $m=\unit{0.5}{\kilogram}$, cart mass $M=\unit{0.5}{\kilo\gram}$ and pendulum length $l = \unit{1}{\meter}$.  The system input is a force applied to the cart, whilst the system measured outputs are (in order) the position of the cart and the angular deviation from vertical of the pendulum.  The plant states are cart position $x$, cart velocity $\dot{x}$, pendulum deflection $\theta$ and pendulum angular velocity $\dot{\theta}$ (Figure~\ref{fig:cpmodel}).
The nonlinear model equations in state space form are:
\begin{subequations}
\begin{align}
(M+m)\ddot{x} + ml\ddot{\theta}\cos\theta - ml\dot{\theta}^2\sin\theta & = u\\
-l\ddot{\theta} + g\sin\theta & = \ddot{x}\cos\theta.
\end{align}
\end{subequations}
Rearranged into a nonlinear state-space form:
\begin{subequations}
\begin{align}
\ddot{x} & = \frac{ml \dot{\theta}^2 \sin \theta -
mg \sin \theta \cos \theta + u}{(M+m \sin^2 \theta)}\\
\ddot{\theta} & = \frac{g \sin \theta -\ddot{x}\cos \theta}{l}.
\end{align}
\end{subequations}
The continuous-time linear model is obtained by linearised about the upwards-facing equilibrium point. is:
\begin{equation}
G(s) =
\left[
\begin{array}{cccc|c}
0 & 1 & 0 & 0 & 0\\
0 & 0 & -\frac{mg}{M} & 0 & \frac{1}{M}\\
0 & 0 & 0 & 1 & 0\\
0 & 0 & \frac{(M+m)g}{Ml} & 0 & -\frac{1}{Ml}\\
\hline
1 & 0 & 0 & 0 & 0\\
0 & 0 & 1 & 0 & 0\\
\end{array}
\right].
\end{equation}

\subsubsection{Baseline linear controller}
The baseline controller is specified in continuous-time as a MISO transfer function.  Assuming positive feedback,
\begin{equation}
K_0(s) =
\begin{bmatrix}
\frac{4(s+0.2)}{(s+5)} & \frac{150(s+4)}{(s+30)}
\end{bmatrix}.
\end{equation}

\subsubsection{Reverse engineered controller}
The baseline controller $K_0(s)$ must be discretised before proceeding.  This leaves a choice for the value of $T_s$.  $K_0$ has two stable, real poles of frequency \unit{5}{\rad~\second^{-1}} and \unit{30}{\rad~\second^{-1}}.  The linearised plant model has two poles at the origin of the $s$-plane, one unstable pole of frequency \unit{4.43}{\rad~\second^{-1}} and a symmetric stable pole.  The objective is to reproduce the original system response rather than to just stabilise $G_0(s)$, so we choose $T_s = (2\pi)/(2 \times 30) \approx \unit{0.1}{\second}$.
Discretised using a Tustin transformation, the discretised baseline controller is:
\begin{equation}
K_0(z) =
\left[
\begin{array}{cc|cc}
0.6 & 0 & 3.2 & 0\\
0 & -0.2 & 0 & 25.6\\
\hline
-0.384 & -2.438 & 3.232 & 72
\end{array}
\right].
\end{equation}
Once more $D_K$ is nonzero (as would also be the case if a zero-order hold were to be used).  The baseline controller does \emph{not} tolerate a delay of $T_s$ added in the loop.  (Using the continuous time controller, this leads to oscillations.  In discrete time, it leads to instability.)
A transport delay of \unit{0.01}{\second} is tolerated though.
(The sampling period could, of course, be reduced so that a delay of a smaller $T_s$ is tolerated, however this can cause a longer prediction horizon (in time steps) to be needed to obtain stability when constraints are imposed, meaning a larger optimisation problem and heavier computational requirements.)

Assuming reduction of the sampling time is not acceptable, there are two options --- loop shifting (obtaining a predictor form observer), and dipole introduction (obtaining a filter form observer).  In this application, large external disturbances are not expected, and input constraints are not being considered.  Loop shifting will allow more time for computation --- $D_Ky(k)$ can be calculated very fast in comparison to solving the MPC QP, and the QP is formed using estimate $\hat{x}(k|k-1)$ so can commence at the previous time step.  (The loop shifting could even be implemented in continuous-time before discretisation).

There are 6 closed-loop poles in the original system ($n=4$, $n_K=2$).  The 2 (initially invisible) additional modes introduced in the observer are placed by designing a Kalman filter for system (\ref{eqn:freepolesystem}) with $Q=1$ and $R=10^7I$ --- i.e. assuming that most uncertainty comes from measurement noise.  The possible realisations are shown in Table~\ref{tbl:pendclpoles}.
\begin{table}[h]
\caption{Cart-pendulum closed-loop poles and relisations}
\label{tbl:pendclpoles}
\centering
\footnotesize
\renewcommand{\arraystretch}{1.25}
\begin{tabular}{cccc}
\hline\hline
& CPR1 & CPR2 & CPR3\\
\hline
$0.2416 + \mathrm{j}0.5304$ & S & O& S\\
$0.2416 - \mathrm{j}0.5304$ & S & O& S\\
$0.7832 + \mathrm{j}0.0630$ & S & S& O\\
$0.7832 - \mathrm{j}0.0630$ & S & S& O\\
$0.8800$ & O & S & S\\
$0.9708$ & O & S & S\\
\hline
New poles & $0.354\pm\mathrm{j}0.624$ & $0.242 \pm \mathrm{j}0.530$ & $0.515\pm \mathrm{j}0.763$\\
\hline
$\|G_{y\rightarrow \hat{y}}\|_{2}$ & $19.61$ &  $3.62$ & $6.59$\\
\hline
\hline
\end{tabular}
\end{table}
Realisation CPR2 is chosen, having lowest $\mathcal{H}_2$ gain from the output to the estimate of the output.

\subsubsection{Constrained MPC realisation}
Due to the loop-shifting, the state in the prediction model is augmented with the signal $D_Kr$, as this is required for correct prediction and enforcement of input constraints when the reference setpoint is non-zero, as well as the cost function (\ref{eqn:stagecost}) is modified to penalise $\|R^{1/2}(u - K_c(x-x_r))\|_2^2$.

Because $K_0(z)$ was in the forward path, pre-filtering is necessary when the observer is in the return path.  The form (\ref{eqn:bdkprefilter}) is used to ensure that the pendulum angle and angular velocity are not included in the state reference trajectory $x_r$.  The control gain $K_c$ is of size $1 \times 4$, so it is possible to force $3$ states in the filtered reference state to be equal to arbitrary values. The matrices
\begin{align}
L_1 & =
\begin{bmatrix}
0 & 1 & 0 & 0\\
0 & 0 & 1 & 0\\
0 & 0 & 0 & 1
\end{bmatrix}&
L_2 & =
\begin{bmatrix}
0 & 0\\
0 & 0\\
0 & 0\\
\end{bmatrix}
\label{eqn:prefiltoption}
\end{align}
constrain the cart reference velocity, and the angular state references to be zero.  All shaping is done using the cart position reference.  Figure~\ref{fig:prefilters} shows three prefilters.  Prefilter 1 is implemented using (\ref{eqn:prefilternominal1}).  Prefilter 2 is implemented using (\ref{eqn:bdkprefilter}) and the values of $L_1$ and $L_2$ above.

\begin{rmk}
\label{rmk:l1l2}
 An alternative would be to set
\begin{align}
L_1 & =
\begin{bmatrix}
1 & 0 & 0 & 0\\
0 & 0 & 1 & 0\\
0 & 0 & 0 & 1
\end{bmatrix}&
L_2 & =
\begin{bmatrix}
1 & 0\\
0 & 0\\
0 & 0\\
\end{bmatrix}
\end{align}
causing the position reference to be equal to its nominal setpoint value, and angular state references to be zero.  Reference shaping is performed using the cart velocity reference.
\end{rmk}

Prefilter 3 uses  $L_1$ and $L_2$, chosen as in Remark~\ref{rmk:l1l2}. Prefilter 2 is used for MPC simulations because any point of the filtered reference trajectory is a stable equilibrium.
\begin{figure}[htbp]
\centering
\includegraphics[width=0.475\textwidth]{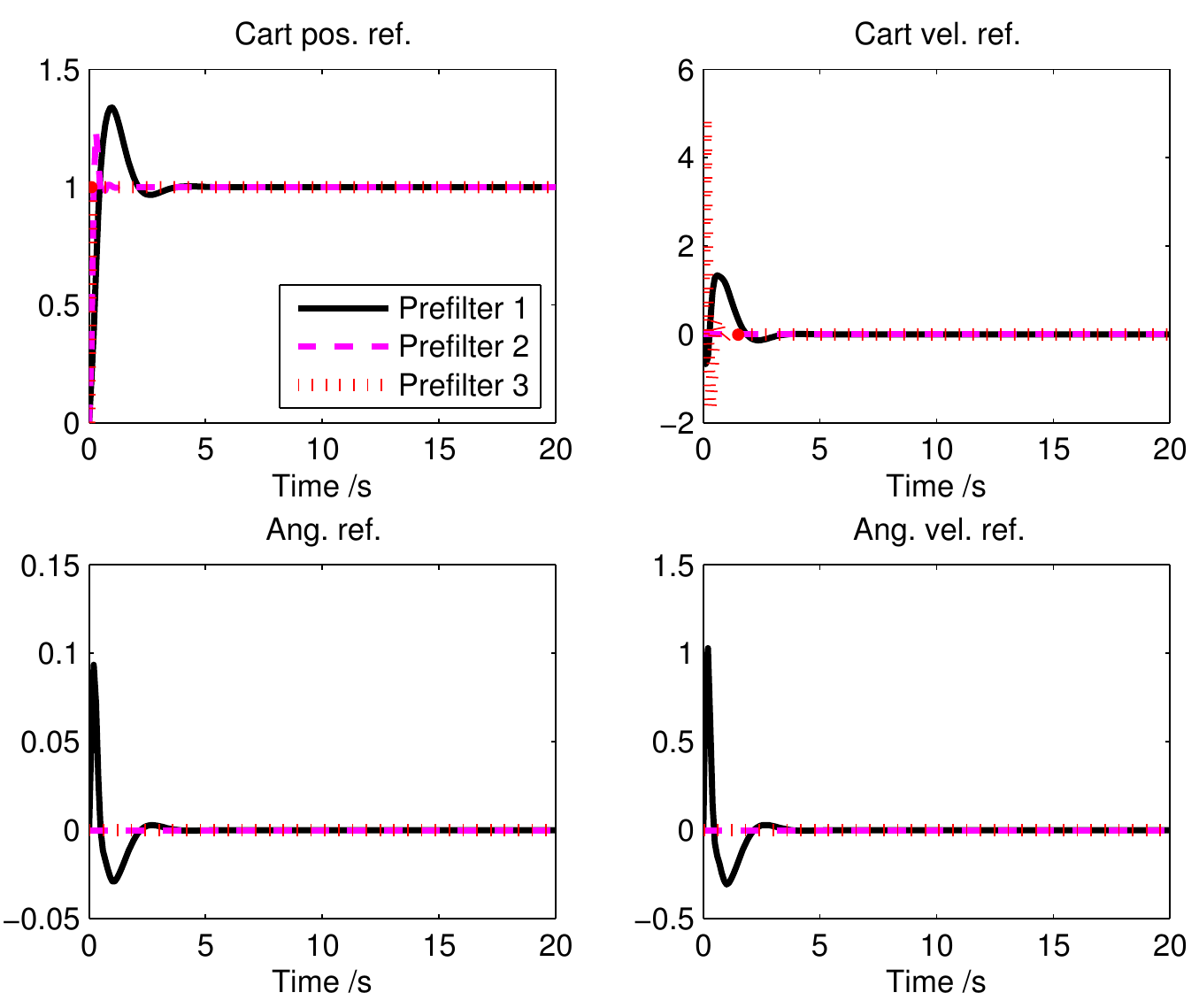}
\caption{Cart-pendulum prefilter realisations}
\label{fig:prefilters}
\end{figure}
\begin{table}[h]
\centering
\renewcommand{\arraystretch}{1.25}
\footnotesize
\caption{Cart-Pendulum simulations}
\begin{tabular}{ccccc}
\hline\hline
 & $K_0(s)$ & $K_0(z)$ & Case 1 & Case 2\\
\hline
$T_s$ & Continuous & \unit{0.1}{\second} & \unit{0.1}{\second} & \unit{0.1}{\second}\\
Realisation & -- & -- & CPR2 & CPR2\\
Prefilter & -- & -- & (\ref{eqn:bdkprefilter})/(\ref{eqn:prefiltoption}) & (\ref{eqn:bdkprefilter})/(\ref{eqn:prefiltoption})\\
%
%
%
%
%
%
Horizon & -- & -- & 15 & 15\\
Constraints & -- & -- & None &
$
\begin{bmatrix}
|\dot{x}|\\ |\theta| \\ |\dot{\theta}|
\end{bmatrix}
\le
\begin{bmatrix}
0.7\\ 0.175 \\ 0.3
\end{bmatrix}
$\\
\hline
\hline
\end{tabular}
\end{table}

\begin{figure*}[htbp]
\includegraphics[width=\textwidth]{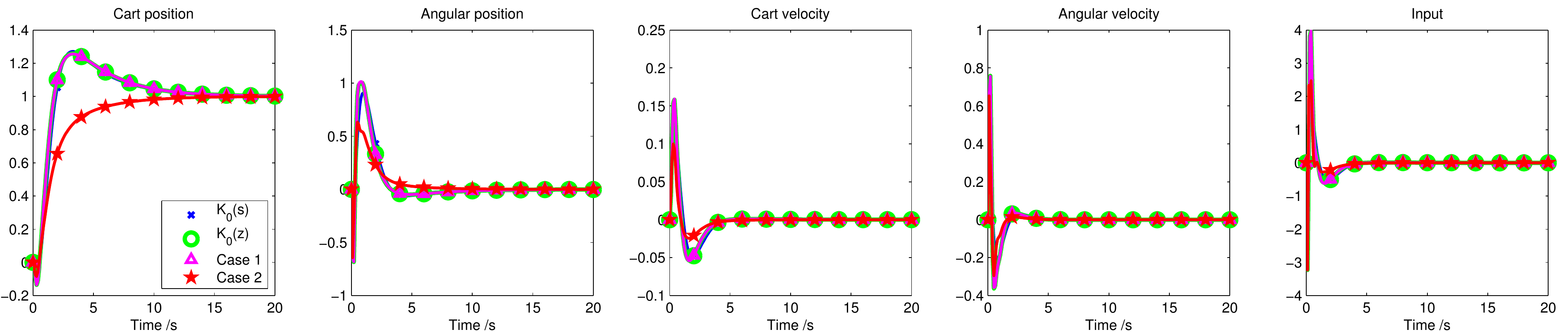}
\caption{Cart-Pendulum trajectories}
\label{fig:pendmatch}
\end{figure*}

\begin{rmk}
A predictor-form observer is used to estimate $\hat{x}(k+1|k)$, from which the MPC determines a control action to apply at time $k+1$.  At time $k=0$ the MPC cannot contribute to the control action.  Figure~\ref{fig:stairs} shows a zero-order hold plot of the pendulum angular velocity for the first \unit{2}{\second} of simulation.  The direct feedthrough due to loop-shifting acts at $t=0$, violating constraints at $t=0.1$.  However, the MPC controller quickly corrects this by $t=0.2$ and thereafter.  If a filter structure with a dipole had been used, this behaviour would not be exhibited.
\end{rmk}

\begin{figure}[htbp]
\centering
\includegraphics[width=0.3\textwidth]{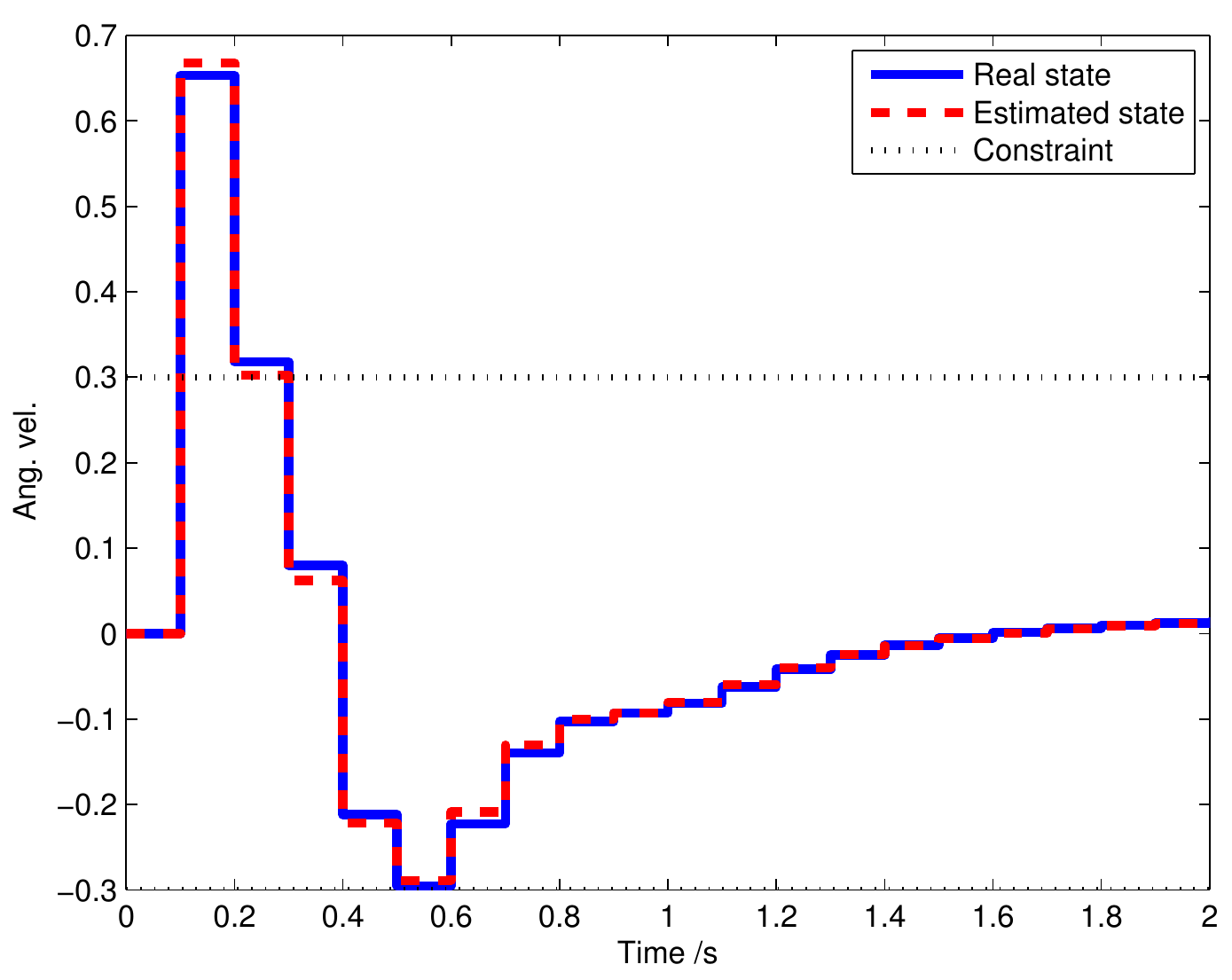}
\caption{Initial delay}
\label{fig:stairs}
\end{figure}

With a prediction horizon of $20$, $2$ input constraints and $6$ output constraints and $3$ slacks for constraint softening per time step, a dense QP with 60 decision variables and $90$ constraints is formed.  With the same setup as described previously, a solution time of $\unit{0.25}{\milli\second}$ is possible.  Linear scaling indicates that this could be implemented on very modest hardware indeed.

\subsection{Control of a Large Airliner --- MIMO control of a highly cross-coupled plant}
\label{sec:b747}
The third example extends \citep{JM2009} by using Proposition~\ref{prop:Ginf} as the heuristic for choosing a solution $T$ that yields a suitable observer realisation.  Furthermore, in the case of plant failure, the cost function (\ref{eqn:stattrack}) better encodes the contingency objectives than (\ref{eqn:stagecost}) whilst retaining fidelity in the nominal case.

\subsubsection{Model}
The plant model and baseline controller is provided by the simulator of a Boeing 747-100/200 that was used by the fault tolerant control group (AG16) of the Group for Aeronautical Research Europe (GARTEUR) \citep{Linden1996,MJ2003,JM2009,ELS2010,BS2011}.  Of interest for this demonstration, are 14 plant states (excluding lateral position) and 27 individually addressable control surfaces, consisting of 4 ailerons, 12 spoiler panels, 2 rudders, 4 elevators, 4 engines and a stabiliser.
\begin{table}[htbp]
\caption{States and Inputs}
\scriptsize
\centering
\renewcommand{\arraystretch}{1.25}
\hfill
\subfigure[States]{
\begin{tabular}{ccc}
\hline\hline
State & Symbol & Unit\\
\hline
Roll rate & $p$ & \unit{}{\rad\:\second^{-1}}\\
Pitch rate & $q$ & \unit{}{\rad\:\second^{-1}}\\
Yaw rate & $r$ & \unit{}{\rad\:\second^{-1}}\\
True airspeed & $V_{\mathrm{TAS}}$ & \unit{}{\meter\:\second^{-1}}\\
Angle of attack & $\alpha$ & \unit{}{\rad}\\
Sideslip & $\beta$ & \unit{}{\rad}\\
Roll & $\phi$ & \unit{}{\rad}\\
Pitch & $\theta$ & \unit{}{\rad}\\
Yaw & $\psi$ & \unit{}{\rad}\\
Height & $h$ & \unit{}{\meter}\\
Engine 1 & EPR1 & --\\
Engine 2 & EPR2 & --\\
Engine 3 & EPR3 & --\\
Engine 4 & EPR4 & --\\
\hline
\hline
\end{tabular}}
\hfill
\subfigure[Control surfaces]{
\begin{tabular}{cc}
\hline\hline
ID & Description\\
\hline
1 & Right inboard aileron\\
2 & Left inboard aileron\\
3 & Right outboard aileron\\
4 & Left outboard aileron\\
5--10 & Right spoiler panels\\
11--16 & Left spoiler panels\\
17 & Right inboard elevator\\
18 & Left inboard elevator\\
19 & Right outboard elevator\\
20 & Left outboard elevator\\
21 & Stabiliser\\
22 & Upper rudder\\
23 & Lower rudder\\
24--27 & Engines 1--4\\
\hline
\hline
\end{tabular}
}\hfill\hfill
\end{table}

The measured outputs are: roll rate ($p$), pitch rate ($q$), yaw rate ($r$), true airspeed ($V_{\mathrm{TAS}}$), roll angle ($\phi$), pitch angle ($\theta$), yaw angle ($\psi$), height ($h$) and rate of height change ($\dot{h}$).

The prediction model is obtained by averaging two empirically linearised models (linearised with opposite sign deflections) about a trimpoint obtained in continuous time using the Simulink \texttt{linmodv5} command to obtain an empirically linearised model about a trim point.  To further avoid the effects of nonlinearities, interaction between lateral and longitudinal states is explicitly nullified in the linearised model by setting the relevant elements of the state update matrix to zero.  The open-loop poles of the linearised continuous-time system range from $0$ to \unit{0.18}{\hertz}.
The baseline controller in \citep{BS2011} is a family of switched single-loop linear controllers with parameter-varying gains and rate limits and saturations.  To simplify the reverse engineering, the altitude select in series with pitch select, heading select in series with roll select, airspeed controller and yaw damper have been extracted, implemented as a block-diagonal MIMO controller and linearised about the operating point (i.e. with the scheduled gains locked to their values at the trim point).  

The plant and baseline controller are both discretised with a sampling period $T_s=\unit{0.1}{\second}$. The plant is discretised using a zero-order hold and the controller with a Tustin transformation.  The linear model dimensions are provided in Table~\ref{tbl:plantsizes}.

\begin{table}[htbp]
\caption{Plant sizes}
\label{tbl:plantsizes}
\footnotesize
\centering
\renewcommand{\arraystretch}{1.25}
\begin{tabular}{ccccc}
\hline\hline
& $K_0$ & Delayed $K_0$ & $G_0$ & Augmented $G_{0}$\\
\hline
States & 14 & 17 & 14 & 21\\
Inputs & 9 & 9 & 27 & 27\\
Outputs & 27 & 27 & 9 & 9\\
\hline\hline
\end{tabular}
\end{table}

\subsubsection{Reverse engineered controller and observer performance}
Whilst the original control system is implemented as four separate loops considering height, roll, yaw damping and air speed separately, these loops are not wholly independent, being coupled by the plant dynamics.

Figure~\ref{fig:xtermorder} shows the non-zero elements of the discretised state-update matrix, and also shows a realisation in a re-ordered form to better show the structure of the system.  The original control loops are also coupled by the inputs.
\begin{figure}[htbp]
\centering
\includegraphics[width=0.475\textwidth]{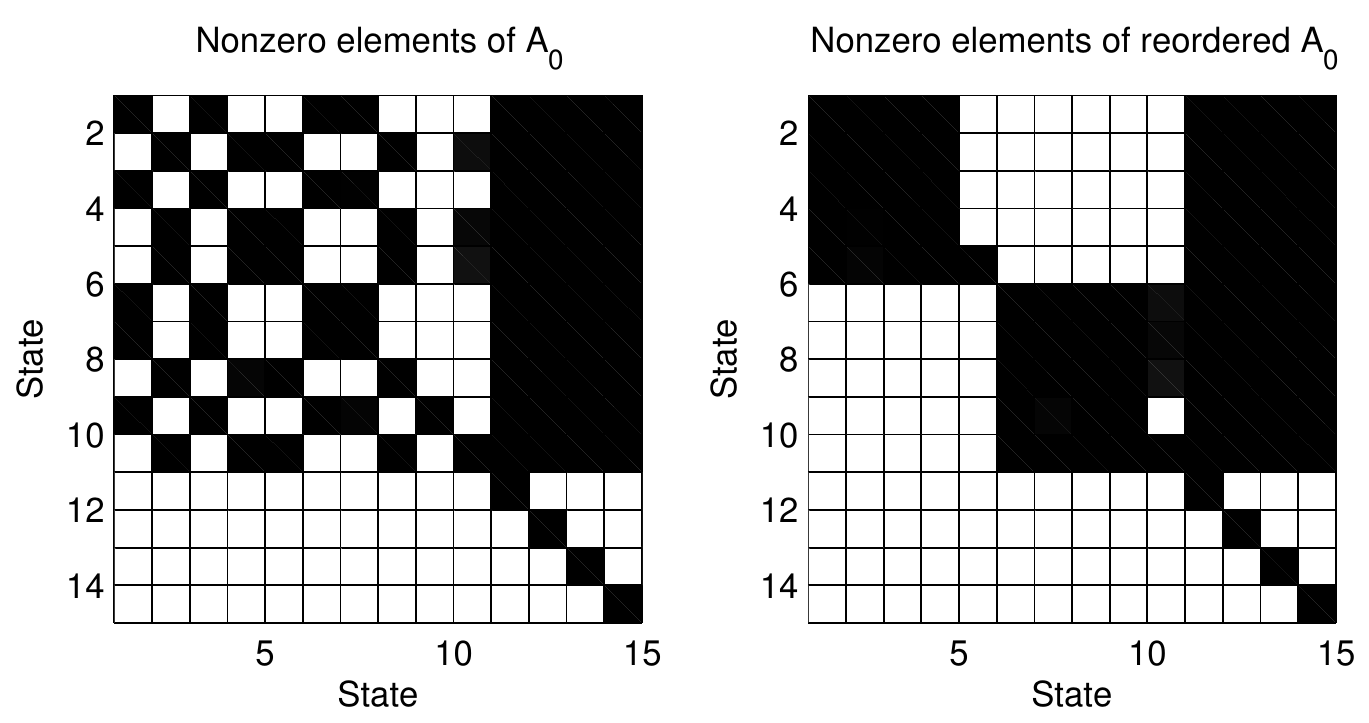}
\caption{Cross terms in linearised B747 model}
\label{fig:xtermorder}
\end{figure}

The baseline controller includes a number of integrators for offset-free tracking.  To cast this into an MPC framework, the linearised model is augmented with $7$ disturbance states, modelling constant disturbances added to the first derivatives of $p$, $q$, $r$, $V_{\mathrm{TAS}}$, angle of attack ($\alpha$), $\theta$ and $\psi$.
This is as many as can be tolerated whilst maintaining observability of pair $(C,A)$, and the chosen combination has been chosen experimentally to give the best observer performance.
The baseline controller is not strictly proper ($K_0(0) \ne 0$), and given the number of states and inputs, and therefore the relatively high complexity of solution of the MPC problem, adding a unit delay is the chosen method for obtaining a strictly proper realisation.

There are up to $^{38}C_{17}$ possible realisations.  However, the $7$ uncontrollable disturbance states must stay in the state-feedback dynamics, reducing this to $^{31}C_{17} \approx 2.7 \times 10^{8}$.  Further reductions in this number can be made by noting that $9$ of the closed-loop poles are complex conjugate pairs, reducing the number of combinations to $431415$.  A further reduction can be obtained by noting that the system
\begin{equation}
\left[
\begin{array}{cc|c}
A & B_d & BC_K\\
0 & I & 0\\
\hline
B_KC & 0 & 0
\end{array}
\right]
\end{equation}
has a further three uncontrollable modes in addition to the disturbances, corresponding to three poles at $z=0.9512$ (so these must remain in the state feedback dynamics), reducing the number of combinations to a somewhat more tractable total of $41958$.  The search using the metric in Proposition~\ref{prop:Ginf} can be carried out in \unit{181}{\second} using a script written using standard MATLAB Control Toolbox commands for analysis of LTI systems on a modestly specified desktop workstation.  Of the possible realisations searched, $8143$ have feasible solutions of $T$.

To demonstrate how critical the ``correct'' choice of realisation, Table~\ref{fig:b747combos} shows three possible combinations.  The first considers the first realisation found in the search (i.e. an arbitrary choice).  The second considers the minimisation of the sum of the absolute values of the observer poles (in theory, the fastest observer).  The third considers the metric of Proposition~\ref{prop:Ginf}.

\begin{table}[htbp]
\caption{Realisations of B747 controller}
\label{fig:b747combos}
\centering
\renewcommand{\arraystretch}{1.25}
\footnotesize
\begin{tabular}{cccc}
\hline\hline
& Realisation 1 & Realisation 2 & Realisation 3\\
\hline
How found? & First found & Min sum abs poles & Proposition~\ref{prop:Ginf}\\
Sum Abs Poles & $10.7585$ & $10.6207$ & $13.2767$ \\
$\|G_{y\rightarrow\hat{e}}\|_2$ & $1116$ & $3925$ & $67.5$\\
$\|G_{d\rightarrow\hat{x}}\|_2$ & $692.6$ & $2875$ & $598.5$\\
Proposition~\ref{prop:Ginf}  & $7.7\times10^{5}$ & $1.1 \times 10^7$ & $4.0\times 10^4$\\
\hline
\hline
\end{tabular}
\end{table}

Figure~\ref{fig:b747poles} graphically presents the open-loop pole distribution, and the closed-loop observer and state feedback pole allocations graphically for each realisation.
\begin{figure*}[htbp]
\centering
\includegraphics[width=0.85\textwidth]{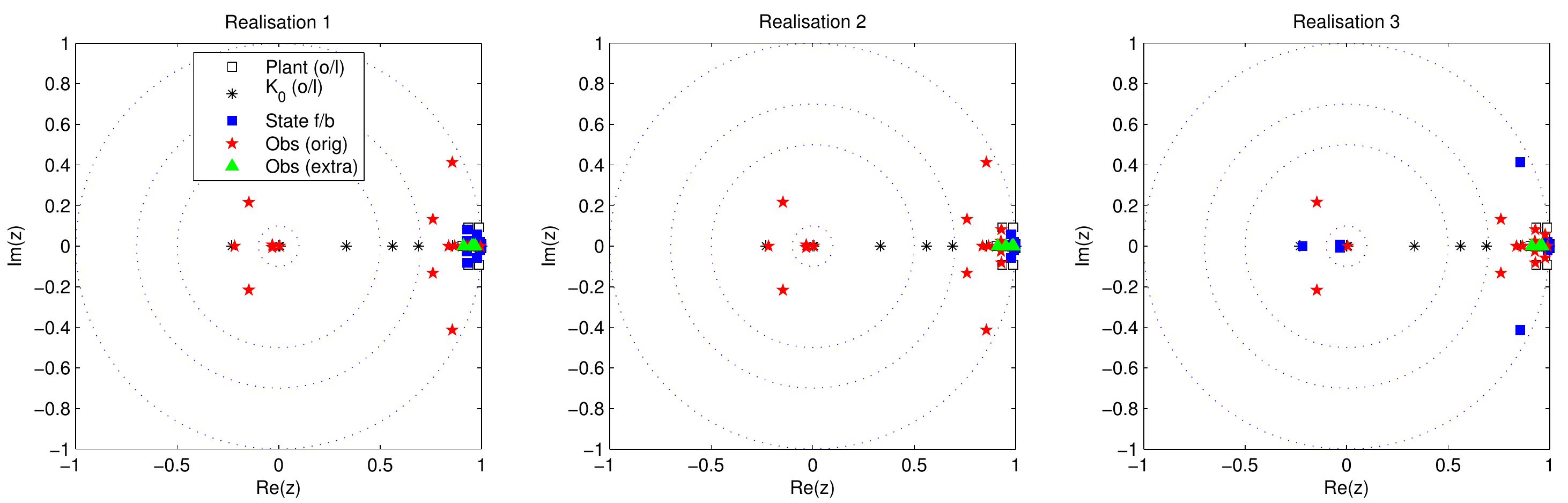}
\caption{Observer-based realisations for Boeing 747 controller --- Pole maps}
\label{fig:b747poles}
\end{figure*}
Figure~\ref{fig:compareobsturb} shows the observer performance for estimation of sideslip ($\beta$, unmeasured), roll angle (measured), yaw angle (measured) and engine power for one engine (unmeasured) in response to unmeasured turbulence, and Gaussian measurement noise, with the standard deviations shown in Table~\ref{tbl:measnoise}, for \unit{60}{\second} of nominally ``straight-and-level'' flight, under the control of the discretised baseline controller.  It can be seen that Realisation 2 (supposedly maximising the ``speed'' of the observer) is actually worse than the arbitrary choice in Realisation 1.  However, with both of these realisations, the measurement noise and disturbances are amplified to the the point that the state estimate is meaningless, rendering the MPC controller no more useful than a rather baroque realisation of $K_0(z)$.  On the other hand, Realisation 3 provides a very good filtered estimate of the measured states, and clear tracking of the salient features of the unmeasured state trajectories.
\begin{table}[htbp]
\caption{Measurement noise $3\sigma$ values (units as per states)}
\label{tbl:measnoise}
\centering
\footnotesize
\renewcommand{\arraystretch}{1.25}
\begin{tabular}{ccccccccc}
\hline
\hline
$p$ & $q$ & $r$ & $V_{\mathrm{TAS}}$ & $\phi$ & $\theta$ & $\psi$ & $h$ & $\dot{h}$\\
\hline
$10^{-3}$ & $10^{-3}$ & $10^{-3}$ & $10^{-2}$ & $10^{-3}$ & $10^{-3}$ & $10^{-2}$ & $10^{-1}$ & $10^{-3}$\\
\hline
\hline
\end{tabular}
\end{table}

\begin{figure*}
\hfill
\subfigure[Realisation 1]{\includegraphics[width=0.3\textwidth]{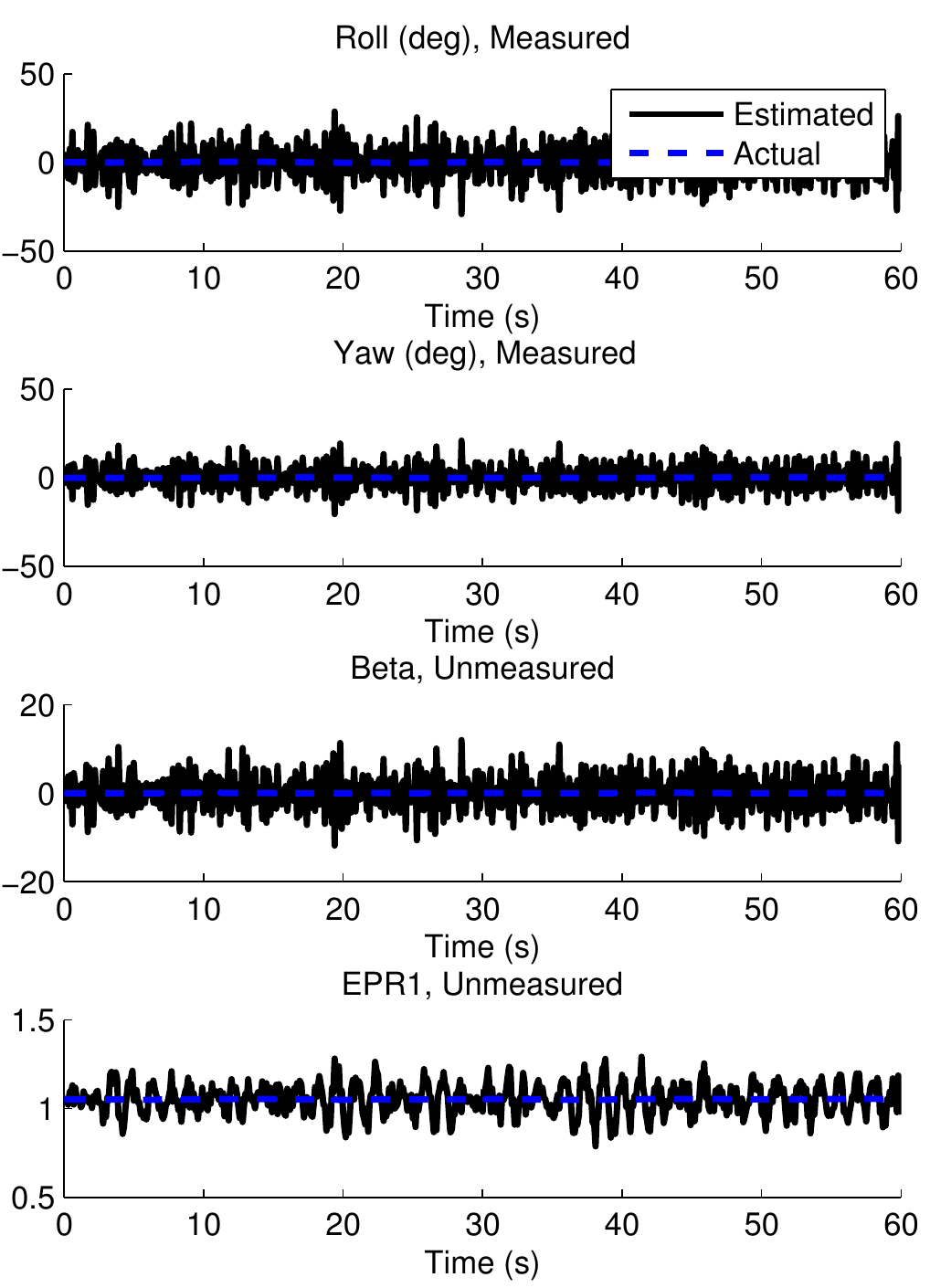}}
\hfill
\subfigure[Realisation 2]{\includegraphics[width=0.3\textwidth]{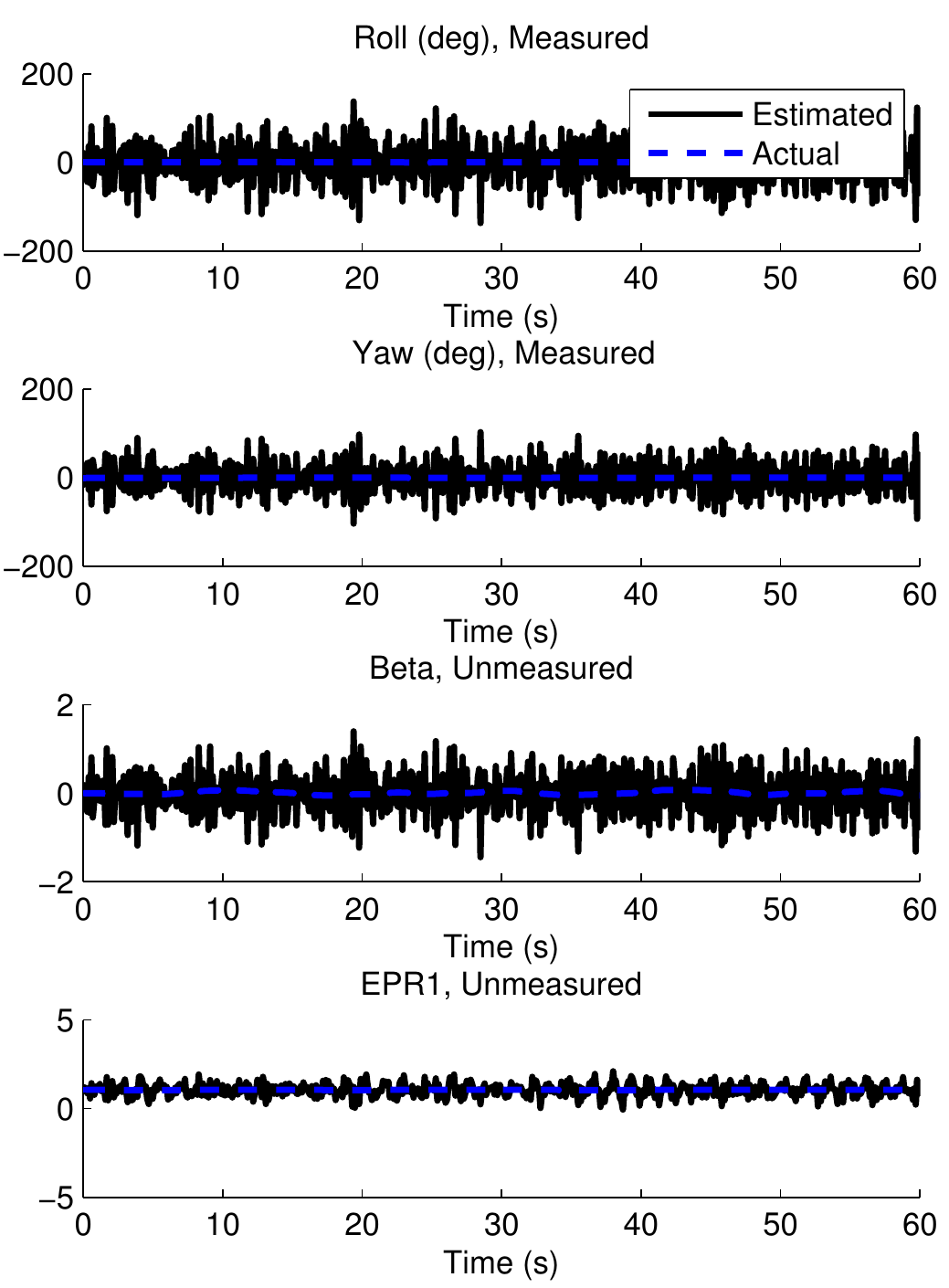}}
\hfill
\subfigure[Realisation 3]{\includegraphics[width=0.3\textwidth]{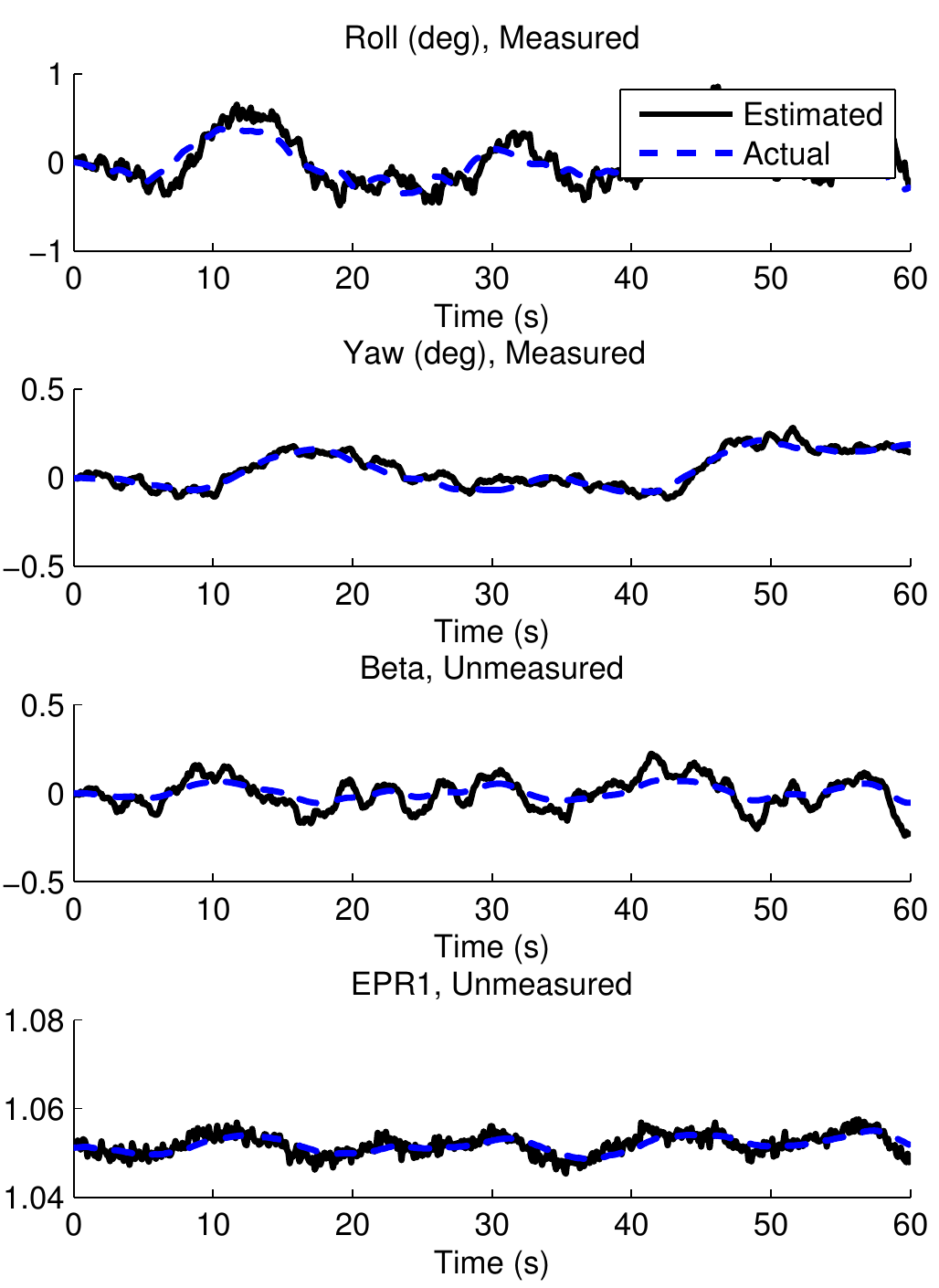}}
\hfill\hfill
\caption{Observer performance with different realisations (straight and level flight with turbulence and sensor noise)}
\label{fig:compareobsturb}
\end{figure*}

\subsubsection{Controller performance}
Unconstrained matching in the tracking a piecewise linear trajectory of yaw, height and true airspeed setpoints using cost function (\ref{eqn:stattrack}) and Realisation 3 is shown in Figure~\ref{fig:mpctrack}.  It is of no surprise that the reverse-engineered controller matches the behaviour of the simplified baseline controller --- the random number seeds used for sensor noise and turbulence are identical for each simulation, and the controllers should behave identically.  Figure~\ref{fig:mpctrack} also shows the closed-loop trajectory when all ailerons, spoilers $1$ and $4$--$8$, left inboard and right outboard elevators are locked at their trim positions.  The baseline controller is unable to perform the yaw man{\oe}uvre, although even this is robust enough to track height and $V_{\mathrm{TAS}}$ well, due to integral action.  However, it can be seen that despite the removal of these degrees of control freedom, that the reverse-engineered controller, given knowledge of the constraints and a prediction horizon $N=10$, still has access to sufficient control authority to approximately track the required trajectory, although there is some loss of performance.

If cost function (\ref{eqn:stagecost}) were to have been used, the trajectory would have been very oscillatory and failed to track the trajectory.  This is clear from the interpretation of the cost function.  When faced with control surfaces constrained to zero, (\ref{eqn:stagecost}) will attempt to regulate the plant state to a subspace where the relevant elements of $u=K_c\hat{x}$ are zero.  For example, when performing a change in heading, it is natural for the aircraft to perform a roll, but cost (\ref{eqn:stagecost}) will try to counteract this due to the locked aileron.  On the other hand, (\ref{eqn:stattrack}) will use other degrees of freedom (i.e. the spoiler panels) to as best as possible effect the ``intention'' of $u=K_c\hat{x}$ even if the realisation is rather different.

\begin{figure}
\centering
\includegraphics[width=0.35\textwidth]{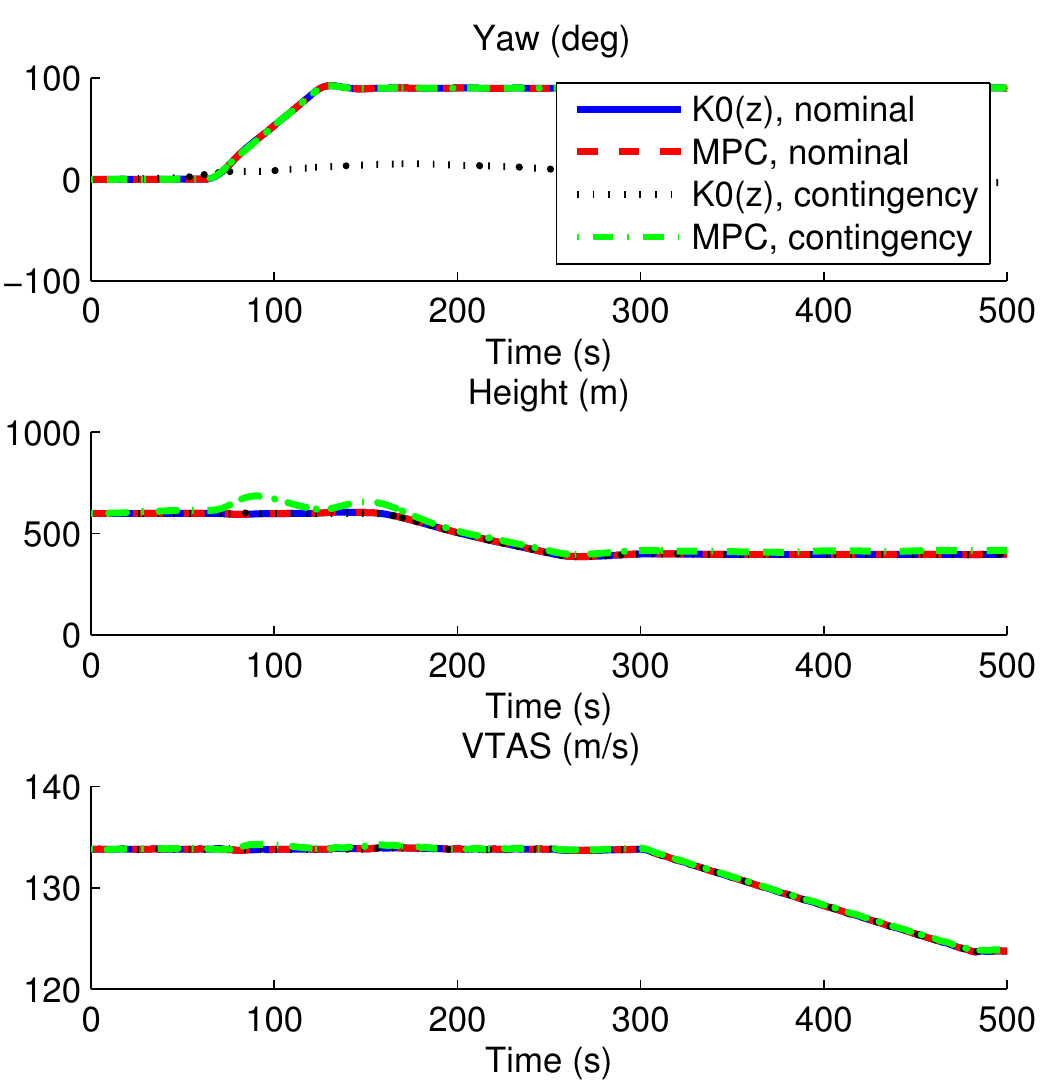}
\caption{Tracking trajectories}
\label{fig:mpctrack}
\end{figure}

\section{Conclusions}
A method has been presented for obtaining a constrained MPC controller that behaves in the same way as an existing LTI output-feedback controller when constraints are not active.  The method accounts for offset-free output tracking and can be implemented using standard tools.  A new heuristic has been presented for choosing the non-unique observer-based realisation upon which the MPC controller is based.  Three examples of different complexity are presented to demonstrate the efficacy and usefulness of the scheme.  Two of these apply to an approximately linear operating region of a non-linear plant.  The resulting MPC controllers are able to enforce constraints, and use disturbance estimation and plant redundancy to provide a level of reconfigurability and robustness to plant failures, as would befit a constrained predictive controller designed from scratch.

\section*{Acknowledgements\addcontentsline{toc}{section}{Acknowledgements}}
This work was funded by EPSRC grant EP/G030308/1, the European Space Agency and EADS Astrium.

\bibliographystyle{hyperlink_bib}
\bibliography{mpcrefs,fpgarefs}

\end{document}